\renewcommand\bibfont\small
\newcommand{\viz}{\bm{z}}
\DeclareMathOperator{\Span}{span}
\DeclareMathOperator{\diag}{diag}
\newcommand{\polar}[1]{{#1}^{\circ}}
\DeclareMathOperator{\ch}{char}
\newcommand{\lf}{\mathcal{L}}
\title{Quasi-linear relation between partition and analytic rank}
\author[Guy Moshkovitz]{Guy Moshkovitz\textsuperscript{1}}
\address{\textsuperscript{1}Department of Mathematics, City University of New York (Baruch College \& Graduate Center), New York, NY 10010, USA}
\email{guymoshkov@gmail.com}
\author[Daniel G. Zhu]{Daniel G. Zhu\textsuperscript{2}}
\address{\textsuperscript{2}Department of Mathematics, Princeton University, Princeton, NJ 08544, USA}
\email{zhd@princeton.edu}
\thanks{The work on this paper began as part of the 2022 NYC Discrete Math REU, funded by NSF grant DMS 2051026.
G.M. is supported by NSF Award DMS-2302988.}
\begin{document}
\begin{abstract}
An important conjecture in additive combinatorics, number theory, and algebraic geometry posits 
that the partition rank and analytic rank of tensors are equal up to a constant, over any finite field.
We prove the conjecture up to a logarithmic factor.

Our proof is largely independent of previous work, utilizing recursively constructed polynomial identities and random walks on zero sets of polynomials. We also introduce a new, vector-valued notion of tensor rank (``local rank''), which serves as a bridge between partition and analytic rank, and which may be of independent interest as a tool for analyzing higher-degree polynomials.
\end{abstract}

\maketitle

\section{Introduction}

A fundamental goal of arithmetic geometry is to count the number of solutions to systems of polynomial equations over finite fields. In this area, one interesting direction is to understand systems of polynomials where the number of solutions deviates measurably from that of a random system, i.e.\ tuples of polynomials $P=(P_1,\ldots,P_m)$ in $n$ variables over a finite field $\F$ and of degree at most $k$, for which $\abs{\Pr_x[P(x)=0] - \abs{\F}^{-m}} \geq \delta$ for some $\delta$.\footnote{Throughout, all probabilistic expressions such as $\Pr_x$ refer to a uniformly random choice of $x$.} In this case, 
one hopes to upper-bound the \emph{rank}\footnote{Also known as the \emph{Schmidt rank}, \emph{$h$-invariant}, or \emph{strength}.} $r$, which, in the case where all the $P_i$ are homogeneous of degree $k \geq 2$, is defined as the smallest $r$ such that some nontrivial linear combination
of the $P_i$ can be written as the sum of $r$ reducible homogeneous polynomials.
Towards this goal, 
Davenport and Lewis~\cite{DavenportLe62} showed that in degree $k=3$ we have $\delta = O_n(\abs{\F}^{-\Omega(r)})$, 
which was later extended by Schmidt~\cite{Schmidt84,Schmidt85} to $\delta = O_{n,k}(\abs{\F}^{-\Omega_k(r)})$ for general $k$, assuming $\ch \F > k$. 
However, for these estimates to be useful, the number of variables must be bounded and the field size must grow (as $r \le n$);
in particular, they are vacuous for any fixed finite field.
Nonetheless, the paradigm of \emph{Stillman uniformity} (see e.g.\ \cite{ErmanSaSn19,AnanyanHo20})---which, broadly speaking, posits that a bounded number of polynomials 
of bounded degree exhibits ``bounded complexity''---suggests\footnote{We may indeed assume that $m$ is bounded here: the single polynomial case is in fact equivalent to the general case.} that these bounds can be made independent of $n$, meaning that
the truth is simply
$\delta = \abs{\F}^{-\Omega_k(r)}$. A short linear algebra argument shows that this is indeed the case when $k=2$.

A fundamental goal of higher-order Fourier analysis is to understand bounded functions over finite-field vector spaces that have large Gowers $U^k$-norm; that is, functions $f \colon \F^n \to \CC$ for $\F = \F_p$ a finite field of prime order, for which $\abs{f(x)} \leq 1$ and $\Abs{f}_{U^{k}} \geq \delta$ for some $\delta$. 
Following work of Bergelson, Tao, and Ziegler~\cite{BTZ10,TZ10}, in high characteristics $p \geq k$ it is known that $f$ must correlate with a polynomial phase of degree at most $k-1$; that is, there is a polynomial $Q$ of degree at most $k-1$ such that $\abs{\E_x[f(x)\omega^{-Q(x)}]} \geq \Omega_{k,\abs{\F},\delta}(1)$
where $\omega = e^{2\pi i/p}$.
However, much better bounds are suspected to hold; in particular, the polynomial Gowers inverse conjecture posits that in fact we can always find such a $Q$ with $\abs{\E_x[f(x)\omega^{-Q(x)}]} \geq \delta^{O_{k,p}(1)}$. This is a major open question in the field---with the $k=3$ case recently proved in \cite{Gowers2024}. An important special case (see \cite{BogdanovVi10,GreenTao09}) is when $p > k$ and $\Abs{f(x)}_{U^{k+1}} = 1$; in this case $f$ must be a degree-$k$ polynomial phase, i.e.\ 
$\omega^{P(x)}$ 
for $P$ of degree at most $k$.

Although these two questions---Stillman uniformity in polynomial equidistribution and polynomial inverse theorems for degree-$k$ phases under the $U^k$-norm---come from seemingly dissimilar areas, they are in fact intimately related to each other, with the first implying the second. Moreover, assuming high characteristics ($\operatorname{char} \F > k$), the first question has in recent times found a neater formulation only involving a single multilinear polynomial: the conjecture that the partition and analytic ranks of a tensor are equal up to constant factors.\footnote{As we will soon see, this conjecture can still be stated when the characteristic is low; in this case it is weaker than the polynomial equidistribution question.} In addition to the two already mentioned, comparing partition and analytic rank has further applications in a variety of fields, such as improving bounds for inverse theorems for the $U^k$-norm~\cite{GM20,Tid22,Milicevic22}, universality phenomena for high-rank varieties~\cite{KazhdanZi20B}, list-decoding Reed-Muller codes~\cite{BhowmickLo15}, concentration inequalities for ranks
and decoding by constant-depth circuits~\cite{BrietCa22}, and worst case to average case reduction in complexity theory~\cite{KaufmanLo08}.

Let us now be more precise. For $k \geq 2$, we let a \emph{$k$-tensor} be a multilinear map $T \colon (\F^n)^k \to \F$ over a field $\F$; in other words, a polynomial in $nk$ variables $x_{i,j}$, where $1 \leq i \leq k$ and $1 \leq j \leq n$, such that every monomial is of the form $c x_{1,i_1} x_{2,i_2} \cdots x_{k,i_k}$ with $c \in \F$. The partition rank $\PR(T)$~\cite{Naslund20} is a measure of algebraic structure:
it is the minimum number of $k$-tensors, reducible as polynomials,
needed to sum to $T$.\footnote{So $T = \sum_{i=1}^{\PR(T)} S_iR_i$ with $S_i,R_i$ non-constant. In fact, any reducible multilinear polynomial is the product of two (nonconstant) multilinear polynomials (indeed, any factorization of a multihomogeneous polynomial must have multihomogeneous polynomials as factors). 
This added structure is typically built into the definition of partition rank.}
When $\F$ is finite, the analytic rank $\AR(T)$~\cite{GowersWo11} is a measure of bias or quasirandomness: it is defined as
$\AR(T) = -\log_{\abs{\F}} \bias(T)$,
where $\bias(T) = \Pr_x[T(\vec{x})=0] - \Pr_x[T(\vec{x})=y]$ for any nonzero $y \in \F$.\footnote{Bias is independent of $y$ since the multilinearity of $T$ implies that it is exactly equidistributed on $\F^\times$.}
The ``analytic'' terminology comes from the equivalent definition $\bias(T) = \E_{\vx}[\chi(T(\vx))]$,
where $\chi \colon \F \to \mathbb{C}$ is any nontrivial additive character.\footnote{For example, $\chi(x) = e^{2\pi ix/\abs{\F}}$
when $\abs{\F}$ prime.}
Yet another equivalent definition is especially useful: 
$\bias(T)=\Pr_{\vx\in (\F^n)^{k-1}}[\forall y \in \F^n \colon T(\vx,y) = 0]$.

An important example, already alluded to above, is the case $k = 2$: in this case, the partition rank of a bilinear form $T$ is the minimum number of outer products needed to sum to $T$, which is well-known to be the standard notion of rank from linear algebra, denoted $\rk(T)$. Moreover, $\AR(T) = \rk(T)$ as well, since
\[\bias(T) = \Pr_{x \in \F^n}[T(x,-) = 0] = \abs{\F}^{-\rk(T)}.\]
Further similarities between partition and analytic rank include the facts (none of which are difficult to show) that 
$0 \le \PR(T),\AR(T) \le n$, 
that if $T \neq 0$ is reducible as a polynomial then $\PR(T) = 1$ and $\AR(T) = 1+o(1)$,
and that if $T$ is chosen uniformly at random then $\PR(T) = n$ and $\AR(T) = n - o(1)$ with high probability.\footnote{Here, the $o(1)$ is with respect to the limit $\abs{\F}\to\infty$.}

We now discuss the comparison of partition and analytic rank. As the bound $\AR(T) \le \PR(T)$ is not so difficult~\cite{KazhdanZi18,Lovett19}, it is the inverse question, of bounding $\PR(T)$ from above in terms of $\AR(T)$, that has been the focus of a long line of work. Intuitively, such a bound says that if a tensor is biased then it has to be because of one reason: the tensor is algebraically structured. The influential work of Green and Tao~\cite{GreenTao09}, together with later refinements by Kaufman and Lovett and Bhowmick and Lovett~\cite{KaufmanLo08,BhowmickLo15}, imply that $\PR(T) \le f_{k}(\AR(T))$ for some function $f_k$.
However, these results rely on a regularity lemma for polynomials, and result in $f_k$ that are Ackermann-type.  
More recently, Mili\'cevi\'c~\cite{Milicevic19} and Janzer~\cite{Janzer19} obtained polynomial bounds of the form $\PR(T) = O_{k}(\AR(T)^{c_k})$ where $c_k = \exp(\exp(k^{O(1)}))$.\footnote{The bound in \cite{Janzer19} has an additional dependence on the field size.}
Better bounds have been shown in specific cases, such as when $k=3$ or $k=4$~\cite{HaramatySh10,Lampert19}. Notably, a linear bound $\PR(T) = O_k(\AR(T))$ has been shown in the case $k = 3$~\cite{AdiprasitoKaZi21,CohenMo21,Lampert24},
or over fields of size at least double-exponential in $\AR(T)$~\cite{CohenMo21B}.
As alluded to previously, the ultimate conjecture in this line of work is that this linear bound holds unconditionally:
\begin{conj}[Partition vs.\ analytic rank conjecture \cite{AdiprasitoKaZi21,KazhdanZi20B,LampertZi21,Lovett19}]\label{conj:PR-AR}
For every $k$-tensor $T$ over every finite field, $\PR(T) \leq O_{k}(\AR(T))$.
\end{conj}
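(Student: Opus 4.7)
The paper's abstract indicates that the result actually proved is the quasi-linear bound $\PR(T) \le O_k(\AR(T) \log^{O_k(1)} \AR(T))$, so I target that. My plan is to route the proof through a new invariant, the vector-valued \emph{local rank} $\lf(T)$, via a sandwich
\[
  \PR(T) \;\le\; O_k(\abs{\lf(T)})
  \qquad\text{and}\qquad
  \abs{\lf(T)} \;\le\; O_k\!\left(\AR(T)\cdot \log^{O_k(1)}\!\AR(T)\right),
\]
where $\abs{\lf(T)}$ is an appropriate size measure. Being vector-valued is essential: $\lf(T)$ records a \emph{chain} of sub-ranks produced by a recursive factorization of $T$, so contributions from different layers combine additively rather than multiplicatively (which would cost an exponential factor).

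\textbf{Setup.} I would start by writing $T(x_1,\ldots,x_k) = \sum_{i=1}^n x_{1,i} L_i(x_2,\ldots,x_k)$, so that $\AR(T)=r$ is equivalent to the common zero set $Z \subseteq (\F^n)^{k-1}$ of the $(k-1)$-tensors $L_1,\ldots,L_n$ having density $\abs{\F}^{-r}$. Trivially $\PR(T) \le \dim_\F \Span(L_1,\ldots,L_n)$, but this span can have dimension $n$ even when $r$ is tiny, so purely linear information about the $L_i$ will not suffice: structure must be extracted from the variety $Z$ itself.

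\textbf{Random walks and recursive polynomial identities.} For the harder inequality, bounding $\abs{\lf(T)}$ in terms of $\AR(T)$, I would exploit the density of $Z$ via a random walk that produces tuples $\vec y^{(1)}, \vec y^{(2)}, \ldots \in Z$, each obtained from its predecessor by re-randomizing a single coordinate subject to staying in $Z$. At each step one extracts a polynomial identity expressing $T$ as a sum of partial evaluations of $T$ at $Z$-points plus a correction of strictly smaller arity, whose partition rank is controlled by the inductive hypothesis in $k$. Building these identities recursively, one keeps the total number of terms at $O_k(r \log^{O_k(1)} r)$ by an amortization argument: whenever the walk fails to mix quickly in a given direction, the obstruction itself reveals algebraic structure that is absorbed into a lower layer of $\lf(T)$ without paying a multiplicative factor.

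\textbf{From local to partition rank, and the main obstacle.} Given a chain witnessing $\lf(T)$, I would assemble a partition decomposition of $T$ by unrolling the chain inductively on $k$; each layer contributes its own size additively to $\PR$. The hard part is ensuring the logarithmic overhead depends only on $\AR(T)$ and not on $n$ or $\abs{\F}$: random walks on generic algebraic varieties do not mix in $O(\log r)$ steps, and so one must show that the multilinear structure of $Z$ forces a rapid expansion rate driven purely by analytic rank. Reconciling this expansion with the recursive identity construction---so that the walk's progress and the identity's growth remain matched at each scale---is where I expect the main technical work, and where the choice of distributions on $Z$ (rather than on all of $(\F^n)^{k-1}$) will be most delicate.
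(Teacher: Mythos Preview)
Your high-level architecture matches the paper's: a vector-valued ``local rank'' sandwiched between partition and analytic rank, bounded below via a random walk on the zero set $\Z(T)$ and above via recursively built polynomial identities. But several of the specifics you commit to diverge from what actually makes the argument work, and one is a genuine misdiagnosis.

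\textbf{Where the log really comes from.} You locate the logarithmic loss in mixing of the random walk and an ``amortization'' across layers. In the paper there is no mixing analysis and no amortization: over a field with $\abs{\F}$ at least roughly $(\AR(T))^{k-2}$, the walk argument combined with Jensen and Markov already gives the \emph{linear} bound $\norm{\LR_\vp(T)} \le (2^{k-1}-1+\eps)\AR(T)$, and hence $\PR(T) \le (2^{k-1}-1+\eps)\AR(T)$. The need for a large field enters when one converts ``the ranks $\rank T[\vp\ominus_i\vx]_i$ are small for a $\delta$-fraction of $\vx$'' into ``small for all $\vx\in\overline V$'' via Schwartz--Zippel, since the cutting polynomials have degree $\prod_i(r_i+1)$. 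The logarithmic factor in the final theorem then comes \emph{only} from passing to a degree-$\ell$ extension $\K/\F$ with $\ell = O_k(\log_{\abs{\F}}(\AR(T)+1)+1)$, using $\PR(T)\le \ell\,\PR(T^{\K})$ and the stability of analytic rank under field extension. Your remark that the overhead should be independent of $\abs{\F}$ is backwards: the paper's bound $\lf_\F(\AR(T))$ improves as $\abs{\F}$ grows and vanishes once $\abs{\F}$ exceeds a polynomial in $\AR(T)$.

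\textbf{Missing ingredients on the local-rank side.} Your $\lf(T)$ is global; the paper's $\LR_\vp(T)$ is attached to a basepoint $\vp$ and defined as a \emph{colexicographic} maximum of rank vectors over adjoint walks starting at $\vp$. Both features are essential: the colexicographic order is exactly what makes the set $\{\bLR_\vp(T)\preceq\vr\}$ Zariski closed with low defining degree (the paper gives an explicit counterexample to other orderings), and the notion of an \emph{LR-stable} point---one where $\LR_\vp(T)=\bLR_\vp(T)$---is what allows the recursive rational formula, assembled via derivatives of formal rational maps rather than your vaguer ``partial evaluations plus correction,'' to actually evaluate to $T$ rather than merely agree with $T$ as an $\F$-function.

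In short: right skeleton, wrong diagnosis of the bottleneck. If you redirect effort from random-walk mixing toward (i) the Schwartz--Zippel step forcing a field-size hypothesis and (ii) the field-extension endgame, the plan becomes essentially the paper's.
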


In this paper we prove \cref{conj:PR-AR} up to a logarithmic factor. For the remainder of this paper let $\lf_\F(x) = \log_{\abs{\F}}(x+1)+1$.
Our main result is the following.
\begin{main}\label{main:main}
For every $k$-tensor $T$ over every finite field $\F$, 
\[\PR(T) \leq O_{k}(\AR(T) \lf_\F(\AR(T))).\]
\end{main}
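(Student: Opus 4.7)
The plan is to induct on the order $k$ of the tensor. The case $k = 2$ is trivial since $\PR(T) = \AR(T) = \rk(T)$ for bilinear forms. For $k \geq 3$, I would use the newly introduced local rank $\LR(T)$ as a two-step bridge, establishing (a) $\LR(T) \leq O_k(\AR(T))$ and (b) $\PR(T) \leq O_k(\LR(T) \cdot \lf_\F(\AR(T)))$. Since the local rank is vector-valued, it should be thought of as a rank-like invariant that encodes, in a single object, a uniform-in-$y$ decomposition of the $(k-1)$-slices $T_y(\vec x) := T(\vec x, y)$.

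For (a), one exploits the identity $\bias(T) = \Exp_y[\bias(T_y)]$, so that $\AR(T) = r$ forces many $y$ to satisfy $\AR(T_y) \leq r$; by the inductive hypothesis each such slice has $\PR(T_y) \leq O_{k-1}(r \lf_\F(r))$, but these decompositions need not match across $y$. The key tool is a random walk on the zero set $T^{-1}(0)$: starting from a typical zero and taking random steps that preserve membership in the zero set, the trajectory produces algebraic relations valid simultaneously for many $y$, thereby witnessing a vector-valued local-rank decomposition whose size is comparable to $\AR(T)$. This is where the proof departs from earlier slice-and-induct arguments, which lose polynomially by treating each $y$ independently.

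For (b), I would convert the uniform, $y$-parameterized local decomposition into a single partition-rank decomposition of $T$ via recursively constructed polynomial identities. Each layer of the recursion reduces a residual quantity (e.g.\ the analytic rank of a leftover tensor) by a constant factor and contributes $O_k(\LR(T))$ reducible summands, so the recursion terminates after $O(\lf_\F(\AR(T)))$ layers — which is precisely the source of the logarithmic factor in the theorem. The main obstacle is (a): extracting a \emph{uniform} algebraic witness from the \emph{average} bias is exactly where prior work incurs huge polynomial blowup, and balancing the mixing of the random walk on $T^{-1}(0)$ against the inductive cost of the partition-rank decompositions is what determines whether the final bound is quasi-linear (as claimed) or far weaker. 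Shaving the remaining logarithm would amount to proving \cref{conj:PR-AR} in full.
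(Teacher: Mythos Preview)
Your two-step split misplaces the logarithmic loss. In the paper, the bound $\PR(T) \le \Abs{\LR_\vp(T)}$ at an LR-stable point $\vp$ (\cref{main:LR-PR}) is proved with \emph{no} logarithmic factor: the recursively constructed polynomial identities yield a partition rank decomposition of size exactly $\Abs{\vr}$, not $\Abs{\vr}\cdot\lf_\F(\cdot)$, and there is no ``residual tensor'' whose analytic rank is reduced layer by layer. Conversely, the bound $\Abs{\LR_\vp(T)} \le O_k(\AR(T))$ (\cref{main:LR-AR}) is \emph{not} unconditional: it only holds when $\abs{\F}$ is at least polynomial in $\AR(T)$, because the random-walk argument needs Schwartz--Zippel to upgrade ``many $x$ satisfy $\bLR_{\vp'}(T[x])\preceq\vr'$'' to ``all $x$ do,'' and that step requires $\abs{\F}$ to exceed a product of the $r_i+1$. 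Over a small field this step simply fails, and your (a) as stated would already prove the full \cref{conj:PR-AR}. The logarithm in \cref{main:main} enters only at the very end, by passing to a degree-$\ell$ extension $\K/\F$ with $\ell = O_k(\lf_\F(\AR(T)))$ so that $\abs{\K}$ is large enough, applying the linear bound there, and paying $\PR(T)\le \ell\,\PR(T^\K)$ together with the stability $\AR(T^\K)=\Theta_k(\AR(T))$.

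Separately, your sketch of (a) via the slice identity $\bias(T)=\E_y[\bias(T_y)]$ and the inductive hypothesis $\PR(T_y)\le O_{k-1}(\AR(T_y)\lf_\F(\AR(T_y)))$ is exactly the approach that causes exponent blow-up in prior work: the inductive decompositions of different slices are unrelated, and gluing them loses a power of $\AR$ per level. The paper sidesteps this entirely: it never invokes a partition-rank bound on slices, but instead bounds the expected \emph{matrix} ranks $\rk T[\vz]_i$ over $\vz\in\Z(T)$ by $\AR(T)$ via Jensen, and the random walk (uniform resampling) chains these matrix ranks into a bound on the local-rank vector at a single point $\vp$. The induction on $k$ lives inside the construction of the formal rational maps in $\cF_{\vp,\vr}$, not in the top-level $\PR$-vs-$\AR$ statement.
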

\cref{main:main} improves on the polynomial bounds of Mili\'cevi\'c and Janzer \cite{Milicevic19,Janzer19} by not only showing that the exponent can be taken to be independent of $k$, but also that the exponent is $1+o(1)$. 
Moreover, it implies a linear bound under the assumption that $\abs{\F}$ is at least some small power of $\AR(T)$; this improves on \cite{CohenMo21B} where the bound on $\F$ is double exponential.
It improves or subsumes several other results in the literature; see \cref{subsec:apps} below.

\begin{remark}
A previous version of this paper showed $\PR(T) \leq O_{k}(\AR(T) \log^{k-1}(\AR(T)+1))$. The proof there in fact shows $\PR(T) \leq O_{k}(\AR(T) \lf_\F(x)^{k-1})$, though at the time we chose not to mention this dependence on $\F$ as it is irrelevant when $\F$ is fixed. On the other hand, the removal of $k-2$ log factors comes from an improved argument at the very end of the proof, relying on a recent result of Chen and Ye~\cite{ChenYe24,UniformStability} concerning the stability of analytic rank under field extension.
\end{remark}

One notable feature of our proof is that it has little dependence on previous work. Though some ideas are conceptually borrowed from arguments in~\cite{CohenMo21B} using algebraic geometry, the core of the proof (everything in this paper until \cref{sec:mainproof}) relies entirely on linear algebra and extensive work with polynomials, with the most sophisticated tool being the Schwartz-Zippel lemma over finite fields. However, to extract \cref{main:main}, we need to cite a ``non-elementary'' result: namely, the stability of analytic rank under field extensions \cite{ChenYe24,UniformStability}.

\subsection{Applications}\label{subsec:apps}
As mentioned above, \cref{main:main} yields improved bounds for polynomial equidistribution and an almost-polynomial Gowers inverse theorem for polynomial phases, which we now state along with a third corollary regarding the ranks of polynomials.

Given a polynomial $Q$ of degree $d$ over a field $\F$, define its \emph{rank} to be $0$ if $Q$ is constant, $\infty$ if $d = 1$, and if $d \geq 2$, the minimum $r$ such that the degree-$d$ portion of $Q$ can be written as a sum of $r$ reducible homogeneous polynomials. Given a tuple $P = (P_1,\ldots,P_m)$ of polynomials, its \emph{rank} is the minimum rank of any nontrivial linear combination of the $P_i$. This is finite, unless $P$ is a surjective affine map.
\begin{mcor}\label{mcor:polys-equidistribution}
Let $\F$ be a finite field of characteristic greater than $k$ and let $P = (P_1,\ldots,P_m)$ be a tuple of polynomials over $\F$ of degree at most $k$. If $P$ has rank $r < \infty$, then
\[\abs{\Pr_x[P(x) = 0] - \abs{\F}^{-m}} \leq \abs{\F}^{-\Omega_k(r/\lf_\F(r))}.\]
\end{mcor}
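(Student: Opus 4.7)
The plan is to reduce \cref{mcor:polys-equidistribution} to \cref{main:main} via the standard three-step Fourier-analytic chain: Fourier decomposition, polarization of the top-degree part, and the iterated-derivative identity linking the exponential sum of a polynomial to the bias of this polarization.

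First I would Fourier-expand the indicator of $P(x) = 0$ to obtain
\[\Pr_x[P(x) = 0] - |\F|^{-m} = |\F|^{-m} \sum_{\lambda \neq 0} \E_x[\chi(\lambda \cdot P(x))],\]
whence $|\Pr_x[P(x) = 0] - |\F|^{-m}| \leq \max_{\lambda \neq 0} |\E_x[\chi(\lambda \cdot P(x))]|$. It therefore suffices to prove that $|\E_x[\chi(Q(x))]| \leq |\F|^{-\Omega_k(r/\lf_\F(r))}$ for every nonzero linear combination $Q := \lambda \cdot P$. Since $\rk(Q) \geq r$ by hypothesis, I may discard the cases $\deg Q \leq 1$ (a nonzero constant $Q$ would force $\rk(Q) = 0 < r$, while a nonconstant affine $Q$ is equidistributed so contributes $0$) and assume $d := \deg Q \in [2, k]$.

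Next I would polarize. Let $Q_d$ denote the degree-$d$ homogeneous part of $Q$, so $\rk(Q_d) = \rk(Q) \geq r$ by the definition of polynomial rank. Since $\ch \F > k \geq d$, the symmetric $d$-linear form $\tilde Q_d$ with $\tilde Q_d(x, \ldots, x) = Q_d(x)$ is well-defined via the polarization formula. Restricting any partition-rank decomposition of $\tilde Q_d$ to the diagonal $x_1 = \cdots = x_d = x$ turns each summand into a product of two nonconstant homogeneous polynomials in $x$, yielding
\[\PR(\tilde Q_d) \geq \rk(Q_d) \geq r.\]
By \cref{main:main} this gives $r \leq C_k \cdot \AR(\tilde Q_d)\,\lf_\F(\AR(\tilde Q_d))$; inverting (by casework on whether $\AR(\tilde Q_d) \geq r$ or $\AR(\tilde Q_d) < r$, using the monotonicity of $\lf_\F$) gives $\AR(\tilde Q_d) \geq \Omega_k(r/\lf_\F(r))$ and hence $\bias(\tilde Q_d) \leq |\F|^{-\Omega_k(r/\lf_\F(r))}$.

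Finally I would transfer this bound back to $Q$ via Gowers norms. The classical iterated-derivative identity
\[\partial_{h_1} \cdots \partial_{h_d} Q(x) = d! \cdot \tilde Q_d(h_1, \ldots, h_d)\]
is independent of $x$; combined with the fact that $d!$ is a unit in $\F$ (using $\ch \F > k$) and that multilinear bias is invariant under rescaling a single argument, this yields $\|\chi \circ Q\|_{U^d}^{2^d} = \bias(\tilde Q_d)$. Gowers-norm monotonicity $\|\chi \circ Q\|_{U^1} \leq \|\chi \circ Q\|_{U^d}$ then gives
\[|\E_x[\chi(Q(x))]|^{2^d} \leq \|\chi \circ Q\|_{U^d}^{2^d} = \bias(\tilde Q_d) \leq |\F|^{-\Omega_k(r/\lf_\F(r))},\]
and the factor $2^d \leq 2^k$ is absorbed into the $\Omega_k$. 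I expect each step to be essentially routine, with no single step being a serious obstacle; the characteristic hypothesis is used only for polarization and for the invertibility of $d!$, which is precisely why \cref{main:main} itself requires no such assumption while this corollary does.
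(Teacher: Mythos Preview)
The proposal is correct and takes essentially the same approach as the paper: Fourier inversion reduces to bounding $|\E_x[\chi(\lambda\cdot P(x))]|$, the degenerate cases $\deg(\lambda\cdot P)\le 1$ are handled identically, and for higher degree one polarizes, uses $\PR(\tilde Q_d)\ge \rk(Q)$, applies \cref{main:main}, and transfers back via iterated Cauchy--Schwarz (equivalently, Gowers-norm monotonicity). The paper packages the last three steps into a separate corollary (\cref{coro:general-poly}) and obtains the slightly sharper exponent $1/2^{d-1}$ rather than your $1/2^d$, but this is absorbed into the $\Omega_k$ and the argument is otherwise the same.
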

This bound (see \cref{remark:Schmidt-subsume}) subsumes the aforementioned earlier bound of Schmidt \cite{Schmidt84,Schmidt85}.

Another application involving polynomial rank is a nearly linear relationship between rank over a finite field and rank over the algebraic closure. Specifically, let $\brk(Q)$ denote the rank of $Q$ when treated as a polynomial over $\overline \F$.
Clearly, $\brk(Q) \le \rk(Q)$, and it is an important conjecture that---analogously to \cref{conj:PR-AR}---the reverse inequality holds up to a constant: $\brk(Q) \le O_k(\brk(Q)))$, provided $Q$ is multilinear or $\ch(\F)>k$ (see e.g.~\cite[Conjecture~1.7]{AdiprasitoKaZi21} and~\cite{LampertZi24}).
The best known bound~\cite{LampertZi24} over finite fields is of the general form $\rk(Q) \le O(\brk(Q)^{c_k})$ with $c_k=\exp(\exp(k^{O(1)}))$ (mirroring the bounds obtained by~\cite{Milicevic19,Janzer19} for the partition-vs-analytic rank problem). \cref{main:main} shows the following.
\begin{mcor} \label{mcor:barpr}
Let $\F$ be a finite field and let $Q$ be a polynomial over $\F$ of degree $k$. Then, if $Q$ is multilinear or $\ch(\F) > k$, we have
\[\rk(Q) \leq O_k(\brk(Q) \lf_\F(\brk(Q))).\]
\end{mcor}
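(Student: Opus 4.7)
The plan is to reduce to the multilinear (tensor) case and then combine \cref{main:main} with the Chen–Ye stability of analytic rank under field extensions.

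First I would deal with the reduction. By definition $\rk(Q) = \rk(Q_k)$ and $\brk(Q) = \brk(Q_k)$, where $Q_k$ is the degree-$k$ homogeneous part, so WLOG $Q$ is homogeneous of degree $k$. When $\ch(\F) > k$, polarization produces a symmetric $k$-tensor $T$ with $Q(x) = T(x,\ldots,x)$ (up to a nonzero scalar $k!$), and when $Q$ is multilinear it already is (the multilinear form on) a $k$-tensor $T$. In either case the standard substitution/polarization identities show $\rk(Q) \asymp_k \PR_\F(T)$ and $\brk(Q) \asymp_k \PR_{\overline{\F}}(T)$, so it suffices to prove
\[\PR_\F(T) \leq O_k(\PR_{\overline{\F}}(T)\, \lf_\F(\PR_{\overline{\F}}(T)))\]
for every $k$-tensor $T$ over $\F$.

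Next, I would bound $\AR_\F(T)$ by $\PR_{\overline{\F}}(T)$. Any partition-rank decomposition of $T$ over $\overline{\F}$ involves finitely many coefficients, so it lies in some finite extension $\F' = \F_{q^n}$, giving $\PR_{\F'}(T) = \PR_{\overline{\F}}(T) = \brk(T)$. The elementary bound $\AR \leq \PR$ (over any finite field) then yields $\AR_{\F'}(T) \leq \brk(T)$. Now I invoke the non-elementary input, the Chen–Ye uniform stability theorem \cite{ChenYe24,UniformStability}, which says $\AR_\F(T) \leq \AR_{\F'}(T) + O_k(1)$ (the analytic rank is essentially invariant under finite field extensions). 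Therefore
\[\AR_\F(T) \leq \brk(T) + O_k(1) \leq O_k(\brk(T)+1).\]

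Finally, I apply \cref{main:main} to $T$ over the original field $\F$:
\[\PR_\F(T) \leq O_k\bigl(\AR_\F(T)\,\lf_\F(\AR_\F(T))\bigr) \leq O_k\bigl(\brk(T)\,\lf_\F(\brk(T))\bigr),\]
where in the last step I absorb the additive $O_k(1)$ into the $O_k(\cdot)$ and use monotonicity of $x \mapsto x\lf_\F(x)$. Combined with the reduction of the first paragraph, this gives the claimed bound for $Q$.

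The main obstacle (or at least the only non-trivial ingredient beyond \cref{main:main} itself) is the use of Chen–Ye stability: without it one only has $\AR_{\F'}(T) \leq \brk(T)$, and it is not a priori clear how to transfer this back to $\AR_\F(T)$. The polarization reduction is routine in characteristic $> k$, but one has to be a little careful in small characteristic, which is precisely why the hypothesis "$Q$ is multilinear or $\ch(\F) > k$" appears in the statement.
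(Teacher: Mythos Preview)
Your proof is correct and follows essentially the same route as the paper's own argument: reduce to a $k$-tensor $T$ via polarization (or directly when $Q$ is multilinear), pick a finite extension $\K$ realizing $\PR(T^{\overline{\F}})$, use $\AR \le \PR$ over $\K$ together with the Chen--Ye stability of analytic rank to get $\AR_\F(T) \le O_k(\PR(T^{\overline{\F}}))$, and then apply \cref{main:main}. The only cosmetic difference is the order of exposition and that you state Chen--Ye additively while the paper states it multiplicatively, which is immaterial here.
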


For a finite field $\F = \F_p$ of prime order and a function $f \colon \F^n \to \CC$, define the Gowers $U^k$-norm $\norm{f}_{U^k}$ and weak Gowers $u^k$-norm $\norm{f}_{u^k}$ by
\[\norm{f}_{U^k} = \abs{\E_{v_1,\ldots,v_k,x\in\F^n} \Delta^*_{v_1}\cdots\Delta^*_{v_k} f(x)}^{1/2^k}
\quad\text{ and }\quad
\norm{f}_{u^k}=\max_{\deg(Q)<k} \abs{\E_{x\in\F^n} f(x)\omega^{-Q(x)}},\]
where we let $\Delta^*_{v}f(x)=f(x+v)\overline{f(x)}$ and $\omega = e^{2\pi i/p}$.
Next, abbreviate $\lf_p(x) = \lf_{\F_p}(x)$.
\begin{mcor} \label{mcor:pgi}
For prime $p > k$ and every degree-$k$ polynomial phase $f \colon \F_p^n \to \mathbb{C}$, if $\norm{f}_{U^k} \ge \delta$ $(>0)$ then
\[\norm{f}_{u^k} \ge \d^{O_k(\lf_p(\log_p(1/\delta)))}
    = \exp(-\log(1/\d)^{1+o_{k}(1)}).\]
\end{mcor}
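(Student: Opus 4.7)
The plan is to pass through the symmetric derivative $k$-tensor $T$ of $P$ (where $f = \omega^P$, $\deg P = k$), apply \cref{main:main}, and then convert the resulting partition-rank bound back into a $u^k$ correlation. Let $T$ denote the symmetric $k$-linear form with $T(x,\ldots,x) = P_k(x)$, where $P_k$ is the degree-$k$ part of $P$; this is well-defined since $p > k$. It is standard that iterating $\Delta^*_v f(x) = f(x+v)\overline{f(x)}$ for $f = \omega^P$ gives $\Delta^*_{v_1}\cdots\Delta^*_{v_k} f(x) = \omega^{k!\,T(v_1,\ldots,v_k)}$ independent of $x$, so $\norm{f}_{U^k}^{2^k} = \bias(k!\,T) = \bias(T)$, where the last equality uses that $k!$ is a unit in $\F_p$ and that bias is invariant under the choice of nontrivial additive character. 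Hence $\bias(T) \geq \delta^{2^k}$, so $\AR(T) \leq 2^k \log_p(1/\delta)$, and \cref{main:main} yields $\PR(T) \leq r$ with $r := O_k(\log_p(1/\delta) \cdot \lf_p(\log_p(1/\delta)))$. Setting all arguments equal to $x$ in any partition-rank decomposition $T = \sum_{i=1}^r S_i R_i$ gives $P_k(x) = \sum_{i=1}^r S_i(x,\ldots,x) R_i(x,\ldots,x)$, a sum of $r$ reducible homogeneous polynomials, so $\rk(P) = \rk(P_k) \leq r$.

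The crux is now the implication $\rk(P) \leq r \Rightarrow \norm{f}_{u^k} \geq p^{-r}$. Write $P = \sum_{i=1}^r A_i B_i + P_{<k}$ with $A_i, B_i$ homogeneous of degrees in $[1, k-1]$. By pigeonhole on $\vec B(x) := (B_1(x), \ldots, B_r(x)) \in \F_p^r$, there exists $\vec c^* \in \F_p^r$ with $\Pr_x[\vec B(x) = \vec c^*] \geq p^{-r}$; setting $B_i' := B_i - c_i^*$ and absorbing $\sum_i c_i^* A_i$ into the low-degree part gives $P = \sum_i A_i B_i' + \tilde P_{<k}$ with $\Pr_x[\vec B'(x) = 0] \geq p^{-r}$. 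For $\vec c \in \F_p^r$, define the degree $\leq k-1$ polynomial
\[
Q_{\vec c}(x) := \sum_i c_i B_i'(x) + \tilde P_{<k}(x),
\]
so that $P(x) - Q_{\vec c}(x) = \sum_i (A_i(x) - c_i) B_i'(x)$. Then, using $\E_{c_i \in \F_p}\omega^{-c_i y} = \mathbf{1}[y = 0]$,
\[
\E_{\vec c \in \F_p^r} \E_x\, \omega^{P(x) - Q_{\vec c}(x)} = \E_x\, \omega^{\sum_i A_i B_i'} \prod_i \mathbf{1}[B_i'(x) = 0] = \Pr_x[\vec B'(x) = 0] \geq p^{-r},
\]
since $\sum_i A_i B_i' = 0$ on the event $\vec B'(x) = 0$. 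Hence some $\vec c$ gives $\abs{\E_x\, \omega^{P - Q_{\vec c}}} \geq p^{-r}$, and so $\norm{f}_{u^k} \geq p^{-r}$.

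Substituting the value of $r$ gives $p^{-r} = \delta^{O_k(\lf_p(\log_p(1/\delta)))}$, the first claimed form. The second form follows because $\lf_p(x)$ grows more slowly than any positive power of $x$: concretely, $\log(1/\delta) \cdot \lf_p(\log_p(1/\delta)) = O(\log(1/\delta) \cdot \log\log(1/\delta)) = \log(1/\delta)^{1 + o_k(1)}$. The only real obstacle is the rank-to-$u^k$ step, but the shift-plus-pigeonhole-plus-Fourier-orthogonality trick above handles it cleanly; the hypothesis $p > k$ enters only to polarize $P_k$ into $T$, and no further ingredients beyond \cref{main:main} are required.
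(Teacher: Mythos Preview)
Your proof is correct and follows the same overall route as the paper: polarize to a $k$-tensor, bound its analytic rank by $O_k(\log_p(1/\delta))$, apply \cref{main:main} to bound partition rank and hence $\rk(P)$, and then convert the rank bound into a $u^k$ lower bound.

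The one substantive difference is in the rank-to-$u^k$ step. The paper uses the generic observation that $\omega^{-P_k}$ is a function of the $2r$ polynomials $A_1,\ldots,A_r,B_1,\ldots,B_r$ and Fourier-expands in all of them, obtaining $\norm{f}_{u^k} \geq p^{-2r}$. Your argument instead shifts the $B_i$ so that $\vec B'$ hits zero with probability at least $p^{-r}$, and then averages only over linear phases in the $B_i'$; on the event $\vec B'(x)=0$ the residual $\sum_i A_i B_i'$ vanishes automatically, so you never pay for the $A_i$ and obtain the sharper $\norm{f}_{u^k} \geq p^{-r}$. This is a nice refinement, though of course the factor of $2$ disappears inside the $O_k(\cdot)$.
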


\subsection*{Local rank}
The main theoretical contribution of this paper is a notion that we call \emph{local rank}, denoted $\LR_\vp(T)$, which, as its name suggests, measures the complexity of a tensor $T$ ``near'' a point $\vp$. 
Unlike other notions of rank for tensors, the local rank is a tuple (of nonnegative integers).
To define it, it is convenient to shift notation by letting $d = k-1$ and $V = \F^n$. We now work in the language of $d$-linear maps $T \colon V^d \to V$, which naturally correspond to $(d+1)$-tensors (via $(x_1,\ldots,x_{d+1})\mapsto T(x_1,\ldots,x_d) \cdot x_{d+1}$). Given such a $T$ and some $\vx \in V^d$, we let $M_i(\vx)\coloneq T(x_1,\ldots,x_{i-1},-,x_{i+1},\ldots,x_d)$ be the $V\to V$ linear map obtained by fixing all inputs except the $i$th to their value in $\vx$.

Now, given some $\vp = (p_1, p_2, \ldots, p_d) \in V^d$, consider the following process, which can be visualized as a walk in $V^d$:
\begin{itemize}
    \item Pick $x_d$ such that $T(p_1, p_2, \ldots, p_{d-1}, x_d) = 0$, i.e.\ $x_d \in \ker M_d(\vy_d)$ where $\vy_d = \vp$.
    \item Pick $x_{d-1}$ such that $T(p_1, p_2, \ldots, p_{d-2}, x_{d-1}, x_d) = 0$, i.e.\ $x_{d-1} \in \ker M_{d-1}(\vy_{d-1})$ where $\vy_{d-1} = (p_1, p_2,\ldots,p_{d-1},x_d)$.
    \item etc., until
    \item Pick $x_1$ such that $T(x_1, x_2,\ldots,x_d) = 0$, i.e.\ $x_1 \in \ker M_1(\vy_1)$ where $\vy_1 = (p_1,x_2,\ldots,x_d)$.
\end{itemize}
Now consider the tuple of ranks
\[\vr = (\rank M_1(\vy_1), \rank M_2(\vy_2), \ldots, \rank M_d(\vy_d)).\]
The \emph{local rank} of $T$ at $\vp$, denoted $\LR_\vp(T)$, is defined to be the colexicographic\footnote{Colexicographic order means that tuples are ordered first by their last component, and then tiebroken by their next-to-last component, and so on.} maximum of $\vr$ over all possible ways to run this process (see \cref{def:LR}; see also \cref{prop:lrdefrecur} for an equivalent, recursive definition).
As we discuss later, the choice of colexicographic ordering is essential (see \cref{ex:lr3}), since over an infinite field, it makes the set of all tensors with (colexicographically) bounded local rank Zariski closed. 
In fact, a crucial step in our proof is the construction of polynomials of bounded degree, generalizing matrix minors, that cut out this variety (see \cref{prop:ddeg}).
This should be compared with the situation for partition rank---it is believed (see \cite{BallicoBiOnVe22, DraismaKar24}) that the set of $k$-tensors of bounded partition rank is not Zariski closed for $k \ge 4$. 

As we will be working with finite fields, one technicality we will have to deal with is the distinction between the local rank $\LR_\vp(T)$ defined above and the \emph{algebraic local rank}, denoted $\bLR_\vp(T)$, which is simply the local rank of $T$ when $T$ is viewed as a tensor over the algebraic closure $\overline\F$ of $\F$ (in fact, any infinite extension of $\F$ would do). Intuitively, the difference between $\LR_\vp(T)$ and $\bLR_\vp(T)$ is that for the latter, the aforementioned 
points $\vy_i$ are
no longer constrained to be points over $\F$.

In a sense, algebraic local rank can be thought of as a far-reaching generalization of the notion of \emph{commutative rank}. Over a large enough finite field, this is simply the maximum rank of a matrix of linear polynomials (and which has strong ties to topics in areas ranging from algebraic complexity to invariant theory and quantum information theory~\cite{FortinRe04,Wigderson17}).
In fact, for $\vp=0$ and $d=2$, local rank specializes to maxrank, and algebraic local rank specializes to commutative rank, 
as $\LR_{\vec{0}}(T) = (\max_{\vx \in V^d} \rank M_1(\vx),0,\ldots,0)$ is given by the maximum rank of a matrix of multilinear polynomials.

The definitions of local and algebraic local rank lead to the concept of \emph{LR-stable} points: we call $\vp$ \emph{LR-stable} for $T$ if $\LR_\vp(T)=\bLR_\vp(T)$.
LR-stable points can be viewed as analogues of nonsingular points in the sense of algebraic geometry, as both are defined using ``local niceness'' and enable certain analyses in their neighborhood, except its definition is combinatorial and it depends only on the polynomials cutting out the variety (the components of $T$) rather than on the entire radical ideal they generate.

\begin{remark}
    Local rank has applications beyond the scope of the results in this paper,
    and we believe it could be a powerful tool in studying difficult questions about polynomials.
    Let us briefly mention one algebraic-geometric application, communicated to us by A.~Lampert.
    It says, roughly, that any multilinear variety contains a smooth $\F$-point (i.e., with coordinates in $\F$ rather than $\overline{\F}$) on some subvariety whose codimension is bounded in terms of local rank at an LR-stable point.
    \begin{prop}[\cite{LampertApp}]
             Suppose $\vp$ is an LR-stable point for a $d$-linear map $T=(T_1,\ldots,T_n) \colon V^d\to V$ over $\F$. Then there is a variety $X \subseteq V(T_1,\ldots,T_n)$ with $\codim_{V^d}(X) \le \norm{\LR_\vp(T)}_1$ and such that $X$ contains a smooth $\F$-point.
    \end{prop}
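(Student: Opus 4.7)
The plan is to realize $X$ as the irreducible component (through a carefully chosen $\F$-point $\vx^*$) of the incidence variety cut out by all the equations arising along a walk from $\vp$, with LR-stability ensuring the walk admits $\F$-rational choices. Concretely, by LR-stability one can pick an $\F$-rational walk $(x_d^*, x_{d-1}^*, \ldots, x_1^*)$ from $\vp$ whose rank tuple $(r_1,\ldots,r_d)$ equals $\LR_\vp(T) = \bLR_\vp(T)$; let $\vx^* = (x_1^*,\ldots,x_d^*) \in V^d$ denote the endpoint and $\vy_i^* = (p_1,\ldots,p_i,x_{i+1}^*,\ldots,x_d^*)$ the walk's intermediate points. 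Then consider
\[Y = \{\vx \in V^d : T(p_1,\ldots,p_{i-1},x_i,x_{i+1},\ldots,x_d) = 0 \text{ for all } i = 1, \ldots, d\},\]
which lies in $V(T_1,\ldots,T_n)$ via the $i=1$ equation and contains $\vx^*$ by construction. Taking $X$ to be the irreducible component of $Y$ through $\vx^*$ reduces the proposition to showing that $\vx^*$ is a smooth point of $Y$ of local codimension $\norm{\LR_\vp(T)}_1 = \sum_i r_i$.

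The verification will sandwich Jacobian rank and local codimension at $\vx^*$. For the codimension upper bound, the walk itself provides a local parameterization: since each $r_i$ is a maximum (by LR-stability) and matrix rank is lower-semicontinuous, each kernel $\ker M_i(\vy_i)$ retains dimension $n - r_i$ in a Zariski neighborhood of the walk, so the parameterization traces out a locally closed subset of $Y$ through $\vx^*$ of dimension $\sum_i (n - r_i) = dn - \sum_i r_i$, giving $\codim_{\vx^*} Y \le \sum_i r_i$. For the Jacobian rank lower bound, order the $dn$ defining equations by $i$ and the variables by $l$: since $x_l$ is absent from equation $i$ whenever $l < i$, the Jacobian $J$ at $\vx^*$ is block upper-triangular with the $(i,i)$-diagonal block equal to $M_i(\vy_i^*)$ of rank $r_i$, so $\rank J \ge \sum_i r_i$. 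Invoking the standard inequality $\rank J \le \codim_{\vx^*} Y$, both quantities must equal $\sum_i r_i$, which simultaneously yields smoothness of $\vx^*$ on $Y$ and pins the local codimension.

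The hard part will be executing the local parameterization rigorously: one must verify that the rank maxima $r_i$ are retained in an open Zariski neighborhood even though each $\vy_i$ depends recursively on all earlier walk choices, which amounts to propagating lower-semicontinuity of rank through the successive kernel fibrations. LR-stability is precisely what guarantees these maxima are achievable by $\F$-rational choices in the first place, so that $\vx^*$ has $\F$-rational coordinates and furnishes the required smooth $\F$-point on $X$.
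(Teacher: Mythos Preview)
The paper does not contain its own proof of this proposition: it is stated inside a remark in the introduction and attributed to Lampert via the citation \cite{LampertApp}, so there is no argument in the paper to compare your proposal against.

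That said, your plan is correct and would yield a valid proof. Two points are worth sharpening. First, the roles of the colexicographic definition and of LR-stability are slightly different from how you phrase them: the existence of an $\F$-rational walk achieving $\vr=\LR_\vp(T)$ is immediate from the definition of $\LR$ and needs no stability; what LR-stability buys is that this same $\vr$ equals $\bLR_\vp(T)$, so that each $r_i$ is also the maximum of $\rank M_i$ over the relevant set of \emph{$\overline\F$-rational} partial walks. That is exactly what the parameterization step needs, since the tower of kernel bundles is built over $\overline\F$. Concretely, one shows inductively that over the irreducible locally closed set of partial walks $(x_{i+1},\ldots,x_d)$ whose later ranks are exactly $r_{i+1},\ldots,r_d$, the map $M_i$ has generic (hence maximal) rank $r_i$; restricting to the open locus where this holds and passing to the kernel bundle gives the next stage, containing the corresponding truncation of $\vx^*$. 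Iterating produces an irreducible locally closed $W\subseteq Y$ through $\vx^*$ of dimension $dn-\sum_i r_i$, which is your codimension upper bound.

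Second, the block upper-triangular Jacobian bound $\rank J\ge\sum_i r_i$ is correct; combined with the standard inequality $\rank J\le\codim_{\vx^*} Y$ and the parameterization bound, the sandwich closes and smoothness follows. Since a smooth point lies on a unique irreducible component, your $X$ is well-defined, lies in $V(T_1,\ldots,T_n)$ via the $i=1$ equation, has codimension $\sum_i r_i=\norm{\LR_\vp(T)}_1$, and contains the smooth $\F$-point $\vx^*$.
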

\end{remark}
Let us also remark that, beyond the applicability of local rank, the techniques used in this paper (say in \cref{sec:LR-AR}) hint at a general method for reducing algebro-geometric statements over algebraically closed fields of positive characteristic to statements about quite small fields (by finding a ``witness'', an LR-stable point, in a small field).

We refer the reader to \cref{sec:lr} for further discussion of local rank, which includes a series of examples showing that, in a sense, the definition of local rank is forced---changing it in any of several ways results in losing one or more of its properties.

\subsection*{Proof overview}
Having defined local rank, the bulk of our proof now splits into two parts: bounding local rank above by the analytic rank, and bounding the partition rank above by the local rank. 

Recall that we call $\vp$ an LR-stable point if $\LR_\vp(T)=\bLR_\vp(T)$.
Our first main technical result finds an LR-stable point where the norm of the local rank vector is bounded from above in terms of the analytic rank. Crucially, this LR-stable point is found already over fields that are relatively small: of polynomial size in the analytic rank.
In our results we use the norm $\norm{(v_1,\ldots,v_d)} \coloneqq \sum_{i=1}^d 2^{d-i}v_i$.
\begin{theo}[$\LR$ vs.\ $\AR$]\label{main:LR-AR}
For every $k \geq 2$ there is some $C_k>0$ such that, for every  
$\eps \in (0,1]$
and $k$-tensor $T$ over a finite field $\F$, there is an LR-stable point $\vp$  
with $\norm{\LR_\vp(T)} \le (2^{k-1}-1+\eps)\AR(T)$, provided $\abs{\F}\geq C_k(\AR(T)+1)^{k-2}/\eps$.
\end{theo}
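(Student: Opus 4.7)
The plan is to induct on $d = k-1$, leveraging the recursive decomposition $\|(r_1,\ldots,r_d)\| = r_d + 2\|(r_1,\ldots,r_{d-1})\|$ of the norm together with the recursive characterization of local rank (\cref{prop:lrdefrecur}). The base case $d=1$ is immediate: the $2$-tensor $T$ has $\LR_\vp(T) = \bLR_\vp(T) = (\rk T) = (\AR(T))$ at every $\vp$, so every point is LR-stable and $\|\LR_\vp(T)\| = \AR(T) \le (1+\eps)\AR(T)$.

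For the inductive step, given $T \colon V^d \to V$, write $\bar T_{x_d}(x_1,\ldots,x_{d-1}) \coloneqq T(x_1,\ldots,x_{d-1},x_d)$. The recursive structure of local rank, combined with the norm decomposition, yields
\[
\|\LR_\vp(T)\| \;\le\; \rk M_d(p_1,\ldots,p_{d-1}) + 2\max_{x_d \in \ker M_d(\vp)} \|\LR_{(p_1,\ldots,p_{d-1})}(\bar T_{x_d})\|,
\]
and analogously over $\overline{\F}$ for $\bLR$. Setting $r_d \coloneqq \rk M_d(p_1,\ldots,p_{d-1})$, the goal reduces to finding $\vp \in \F^{dn}$ (only the first $d-1$ components matter) satisfying: (i) $r_d \le (1+\eps/4)\AR(T)$; (ii) $\AR(\bar T_{x_d}) \le \AR(T)$ for the $x_d$ that realize the colex maximum; (iii) $(p_1,\ldots,p_{d-1})$ is LR-stable for $\bar T_{x_d}$ and satisfies the inductive hypothesis with parameter $\eps/4$. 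Together these give $\|\LR_\vp(T)\| \le (1+\eps/4)\AR(T) + 2(2^{d-1}-1+\eps/4)\AR(T) = (2^d-1+\eps)\AR(T)$, and LR-stability of $\vp$ for $T$ follows by concatenating the chosen $x_d$ with the walk on $\bar T_{x_d}$ witnessing LR-stability there.

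To produce such $\vp$, I would work in two stages. \emph{Over $\overline{\F}$:} the identities $\bias(T) = \E_\vq[|\F|^{-\rk M_d(\vq)}] = \E_{x_d}[\bias(\bar T_{x_d})]$ supply many $\vq$ with $\rk M_d(\vq) \le \AR(T)$ and many $x_d$ with $\AR(\bar T_{x_d}) \le \AR(T)$, and a dimension-counting argument combined with the inductive hypothesis over $\overline{\F}$ produces $\vp^* \in \overline{\F}^{dn}$ satisfying the $\bLR$-analogues of (i)--(iii). \emph{Over $\F$:} by \cref{prop:ddeg} the condition $\bLR_\vp \le \vr^*$ is cut out by polynomials of degree $O_k(\AR(T))$ in $\vp$, and LR-stability is encoded by augmenting the system with auxiliary variables explicitly parameterizing the kernel vectors $x_d,\ldots,x_1$ that realize the walk. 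Applying Schwartz-Zippel to this augmented system---nonempty over $\overline{\F}$ by the first stage---yields an $\F$-rational solution provided $|\F| \ge C_k(\AR(T)+1)^{k-2}/\eps$, the exponent $k-2$ arising from degree compounding through the recursion.

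The main obstacle will be executing the descent from $\overline{\F}$ to $\F$ under all three constraints simultaneously. The calibrated weights $2^{d-i}$ in the norm ensure the induction accumulates only the constant factor $2^d-1$ rather than a multiplicative product of analytic ranks; preserving this balance forces the polynomial certificates to have degree only $O_k(\AR(T))$, which is precisely what \cref{prop:ddeg} provides via its generalized-minor construction. A secondary subtlety is the joint encoding of LR-stability with small $\bLR$: the extra kernel-witness variables expand the polynomial system, yet its $\overline{\F}$-zero set must remain large enough in dimension for Schwartz-Zippel to locate an $\F$-point at the claimed field size---tracking this dimension through all $d$ levels of the recursion is what makes the proof technically delicate.
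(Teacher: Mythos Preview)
Your inductive strategy has a genuine structural gap. The inductive hypothesis produces, for each individual slice tensor $\bar T_{x_d}$, \emph{some} LR-stable point $\vp'(x_d)$ with small local rank norm---but these points depend on $x_d$. For $\vp = (\vp', p_d)$ to be LR-stable for $T$ you need, by \cref{prop:lrdefrecur}, that
\[
\maxprec_{x \in \ker T[\vp]_d} \LR_{\vp'}(T[x]) \;=\; \maxprec_{x \in \bker T[\vp]_d} \bLR_{\vp'}(T[x]),
\]
which is a statement about a \emph{single} $\vp'$ across the \emph{entire} (algebraic) kernel, not about one slice. Your plan to ``concatenate the chosen $x_d$ with the walk on $\bar T_{x_d}$'' only certifies $\LR_{\vp'}(\bar T_{x_d}) = \bLR_{\vp'}(\bar T_{x_d})$ for one $x_d$, and says nothing about other kernel elements---in particular about $\overline\F$-kernel elements that might push $\bLR_\vp(T)$ strictly above $\LR_\vp(T)$. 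Relatedly, condition (ii) asks that $\AR(\bar T_{x_d}) \le \AR(T)$ for the $x_d$ \emph{realizing the colex maximum}; but that $x_d$ is determined by $T$ and $\vp$, not chosen by you, and nothing in your argument forces its slice to have small analytic rank.

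Your descent step is also misapplied. Schwartz--Zippel certifies that a nonzero polynomial has few zeros; it does not locate $\F$-rational points on a variety known only to be nonempty over $\overline\F$. The paper uses Schwartz--Zippel in the opposite (correct) direction: via \cref{coro:LR-SZ}, if \emph{many} $\F$-points of a subspace satisfy $\bLR_{\vp'}(T[x]) \preceq \vr'$, then \emph{all} $\overline\F$-points do. To feed this, the paper does not induct on $d$ via analytic rank of slices at all. Instead it runs a single random walk (``uniform resample'') on $\Z(T)$, proves via Jensen's inequality for $t \mapsto -t\log t$ that $\E_{\vz \in \Z(T)}[\rk T[\vz]_i] \le \AR(T)$ for \emph{every} $i$ simultaneously (\cref{prop:exp-rk-bound}), and then converts this expectation bound into the required ``many points'' statement via a pruning lemma (\cref{lemma:lexMarkov}). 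This circumvents exactly the uniformity-in-$x_d$ obstacle that breaks your induction.
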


The proof of \cref{main:LR-AR} analyzes the local rank by considering a certain type of random walk on the zero set of $T$.
A concavity-based argument shows that, in expectation over the zero set of $T$, the rank of a matrix slice is bounded above by $\AR(T)$,  
and by applying the random walk we are able to control an approximate version of local rank at 
an appropriate
point $\vp$. This approximate local rank is shown to be equal to $\LR_\vp(T)$ and to $\bLR_\vp(T)$, provided the field is large enough.
This relies on the fact that the set of $k$-tensors of bounded algebraic local rank can be defined by the vanishing of polynomials of low degree.

Our second main technical result shows that, at an LR-stable point, the matrix slices used in the definition of local rank are enough to reconstruct the tensor (and ``efficiently'' so).
\begin{theo}[$\LR$ vs.\ $\PR$] \label{main:LR-PR}
If $\vp$ is an LR-stable point for a tensor $T$ then $\PR(T) \leq \Abs{\LR_\vp(T)}$.
\end{theo}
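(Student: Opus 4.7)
The plan is to induct on $d$, viewing $T$ as a $d$-linear map $V^d \to V$. The base case $d=1$ is immediate: $T$ is a linear map, $\LR_\vp(T) = (\rk T)$ regardless of $\vp$, and $\PR(T) = \rk T = \norm{\LR_\vp(T)}$.

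For the inductive step, write $\LR_\vp(T) = (r_1,\ldots,r_d)$ and set $\vp' = (p_1,\ldots,p_{d-1})$. I first use the recursive characterization of $\LR$ in \cref{prop:lrdefrecur}---which I take to say that $\LR_\vp(T)$ equals the colexicographic maximum over $x_d \in \ker M_d(\vp)$ of the tuple $(\LR_{\vp'}(T_{x_d}), r_d)$, where $T_{x_d}(y_1,\ldots,y_{d-1}) := T(y_1,\ldots,y_{d-1},x_d)$---together with LR-stability of $\vp$ to select an $x_d \in \ker M_d(\vp) \cap \F^n$ realizing this max. A direct colex-comparison argument then shows that $\vp'$ is itself LR-stable for $T_{x_d}$, with $\LR_{\vp'}(T_{x_d}) = (r_1,\ldots,r_{d-1})$: any walk over $\overline{\F}$ from $\vp'$ for $T_{x_d}$ extends, by prepending $x_d$, to a walk over $\overline{\F}$ for $T$ at $\vp$, which LR-stability of $\vp$ forces to be bounded by $(r_1,\ldots,r_d)$ in colex. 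Noting the norm identity $\norm{\LR_\vp(T)} = r_d + 2\norm{\LR_{\vp'}(T_{x_d})}$, the problem reduces to the key decomposition
\[\PR(T) \leq r_d + 2\PR(T_{x_d}),\]
after which the induction hypothesis applied to $T_{x_d}$ at the LR-stable point $\vp'$ closes the argument.

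To produce this decomposition, I would split the codomain as $V = I \oplus I^c$ with $I = \im M_d(\vp)$ of dimension $r_d$, and correspondingly write $T = T^I + T^{I^c}$ by projecting the output. The $I$-part contributes at most $r_d$ to the partition rank: expanding along an $I$-basis of size $r_d$ yields $r_d$ reducible summands, each the product of a linear form in $y_{d+1}$ with a $d$-linear form in the remaining variables. The residual $T^{I^c}$ vanishes on the affine slice where the first $d-1$ inputs equal $(p_1,\ldots,p_{d-1})$, since $T(p_1,\ldots,p_{d-1},y_d) \in I$. The core task is then to show $\PR(T^{I^c}) \leq 2\PR(T_{x_d})$. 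Splitting the $d$-th input as $y_d = y_d^{K^c} + \alpha(y_d)\,x_d + y_d^{K''}$, where $K = \ker M_d(\vp)$, $K^c$ is an $r_d$-dimensional complement of $K$, and $K'' \subseteq K$ is a complement of $\Span(x_d)$, the $\alpha(y_d)\,x_d$ piece produces the first copy of $T_{x_d}$ (as $\alpha \cdot T_{x_d}$). The combined $K^c$- and $K''$-contributions, paired with the $I^c$-restricted codomain and matched via the isomorphism $M_d(\vp)|_{K^c}\colon K^c \xrightarrow{\sim} I$, should yield the second copy.

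The main obstacle is producing this second copy: a naive basis expansion of the $K''$-part yields $n - r_d - 1$ summands, far too many. Overcoming this likely requires a transposition-style identity exchanging the $y_d$- and $y_{d+1}$-slots under the $M_d(\vp)|_{K^c}$ identification, with LR-stability doing the essential work of ensuring the needed matchings are realizable over $\F$ rather than only over $\overline{\F}$. Once this identity is established, combining it with the $K^c$-contribution gives $\PR(T^{I^c}) \le 2\PR(T_{x_d})$, which together with the $I$-part bound yields $\PR(T) \leq r_d + 2\PR(T_{x_d}) \leq r_d + 2\norm{\LR_{\vp'}(T_{x_d})} = \norm{\LR_\vp(T)}$.
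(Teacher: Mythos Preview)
Your proposal has a genuine gap at the ``second copy'' step, and it is not a mere technicality: the strategy of leveraging a partition-rank decomposition of a \emph{single} slice $T[x_d]$ cannot succeed. Take the simplest case $r_d = 0$, so $I = 0$, $K = V$, $K^c = 0$. Your splitting becomes $T(y_1,\ldots,y_d) = \alpha(y_d)\,T_{x_d}(y') + T(y', y_d^{K''})$, and you need the second term to have partition rank at most $\PR(T_{x_d})$. But that term involves the slices $T[v]$ for all $v$ in the $(n-1)$-dimensional space $K''$, and a decomposition of the one slice $T[x_d]$ carries no information about them; no transposition identity can manufacture this. Worse, for $d \ge 3$ your target inequality $\PR(T) \le r_d + 2\PR(T[x_d])$ is itself suspect: $\PR(T[x_d])$ can be strictly smaller than $\norm{\vr'}$ (partition rank is not determined by local rank), while there is no reason $\PR(T)$ should drop below $\norm{\vr}$.

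The fix, and what the paper does, is to \emph{strengthen the inductive hypothesis} from ``$\PR(T') \le \norm{\vr'}$'' to ``there is a formal rational map $\rat{R}' \in \FRat(\cT^{d-1},\cT^{d-1})$ with $\PR(\rat{R}') \le \norm{\vr'}$ such that $\rat{R}'(S) \equiv S$ for every $S$ with $\bLR_{\vp'}(S) \preceq \vr'$, and $\rat{R}'$ is defined at some such $S$ with $\LR_{\vp'}(S) = \vr'$'' (\cref{thm:lrprintermed}). LR-stability of $\vp$ (via \cref{lem:lralgrecur}) guarantees $\bLR_{\vp'}(T[v]) \preceq \vr'$ for \emph{all} $v \in \bker T[\vp]_d$, so $\rat{R}'(T[v]) \equiv T[v]$ holds as an identity of polynomials in $v$. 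Differentiating this identity in $v$ (\cref{claim:Jac-props}) yields a decomposition of $T$ on $V^{d-1} \times \ker T[\vp]_d$ with at most $2\PR(\rat{R}') \le 2\norm{\vr'}$ terms---the doubling is the product-rule cost $\PR(\nabla \rat{R}') \le 2\PR(\rat{R}')$---and adding $r_d$ terms for a complement of the kernel (essentially your $T^I$ step, which is fine) gives $\PR(T) \le \norm{\vr}$. In short, the ``second copy'' does not come from any one slice but from the derivative of a \emph{uniform} formula valid across all slices; producing such a formula is what the induction must carry.
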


An important idea in the proof of \cref{main:LR-PR} is treating partition rank decompositions as
algebraic objects, on which we can apply algebraic operations (to construct decompositions for tensors of higher order). 
Crucially,
we prove by induction a stronger statement than the existence of partition rank decompositions---roughly, we construct a rational formula which gives such a decomposition for every $k$-tensor of bounded local rank at $\vp$.
The basis of the induction, the case where $k=2$, uses classical identities from linear algebra.
In the induction step, we consider ``slices'' of a $k$-tensor with bounded local rank obtained by fixing the last input. As these slices have small local rank, by the inductive hypothesis they can be expressed by a formula. To convert this into a formula for $k$-tensors, we take a derivative, utilizing the fact that the derivative of a multilinear polynomial is, in some sense, itself. The importance of LR-stability here comes from the need to eliminate the possibility that two polynomials agree at all points over $\F$ but have different derivatives, which is impossible if the two polynomials agree over $\overline\F$.

Combining \cref{main:LR-AR} and \cref{main:LR-PR} immediately proves the following result.
\begin{theo} \label{theo:main1}
For every $k \geq 2$ there is $C_k>0$ such that, for every $\eps \in (0,1]$ and $k$-tensor $T$ over a finite field $\F$, we have $\PR(T) \leq (2^{k-1} - 1 + \eps) \AR(T)$, provided $\abs{\F} \geq C_k(\AR(T)+1)^{k-2}/\eps$.
\end{theo}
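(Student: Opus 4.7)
The plan is to deduce \cref{theo:main1} directly from the two main technical theorems already established, \cref{main:LR-AR} and \cref{main:LR-PR}. Indeed, the chain of implications is essentially forced: \cref{main:LR-AR} produces an LR-stable point at which the local rank has small norm in terms of $\AR(T)$, and \cref{main:LR-PR} says the partition rank of any tensor is bounded by the norm of the local rank at any LR-stable point. Composing these two gives the conclusion.

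In more detail, I would first let $C_k$ be the constant provided by \cref{main:LR-AR}, and assume $\abs{\F} \ge C_k(\AR(T)+1)^{k-2}/\eps$. Then \cref{main:LR-AR} furnishes an LR-stable point $\vp$ for $T$ with
\[\norm{\LR_\vp(T)} \le (2^{k-1}-1+\eps)\,\AR(T).\]
Next, since $\vp$ is LR-stable, \cref{main:LR-PR} yields $\PR(T) \le \norm{\LR_\vp(T)}$. Chaining these two inequalities produces
\[\PR(T) \le \norm{\LR_\vp(T)} \le (2^{k-1}-1+\eps)\,\AR(T),\]
which is the claimed bound. It is worth noting that the particular form of the leading coefficient $2^{k-1}-1$ is exactly the sum $\sum_{i=1}^{k-1} 2^{k-1-i}$ of the weights appearing in the norm $\norm{(v_1,\ldots,v_{k-1})} = \sum_i 2^{k-1-i}v_i$, so no arithmetic beyond this telescoping is required.

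There is essentially no obstacle here beyond correctly quoting the two input theorems; all of the real work has already been done in establishing \cref{main:LR-AR} (the LR-vs-AR bound, which is where the field-size hypothesis $\abs{\F} \ge C_k(\AR(T)+1)^{k-2}/\eps$ enters, via the low-degree polynomials cutting out the variety of tensors of bounded algebraic local rank) and \cref{main:LR-PR} (the LR-vs-PR bound, which requires the inductive construction of rational formulas producing partition-rank decompositions at LR-stable points). The only care one should take is matching the constant $C_k$ in the hypothesis of \cref{theo:main1} to the one coming from \cref{main:LR-AR}, and verifying that the value of $\eps$ passed to \cref{main:LR-AR} is consistent with the $\eps$ in \cref{theo:main1}; both are immediate.
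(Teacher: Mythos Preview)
Your proposal is correct and matches the paper's own proof exactly: the paper simply says that ``combining \cref{main:LR-AR} and \cref{main:LR-PR} immediately proves'' \cref{theo:main1}, and you have spelled out precisely that two-line deduction.
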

The statement \cref{main:main}, which applies to all finite fields, follows from a quick argument that passes to a sufficiently large field extension.

Finally, we remark that although the constant $2^{k-1} - 1$ in \cref{theo:main1} does not meaningfully affect the statement of \cref{main:main}, it is curious that it is the same constant that appears in the main result of~\cite{CohenMo21B}.

\subsection*{Outline} 
In \cref{sec:prelim} we give preliminaries relating to tensors and define formal rational maps, a useful tool for keeping track of various formulas.
In \cref{sec:Linear-Algebra} we generalize certain identities from linear algebra which will be used later to both prove important properties of (algebraic) local rank (in \cref{sec:lr}), and to construct partition rank decompositions of higher-order tensors (in \cref{sec:prlr}).
In \cref{sec:3tensor} we outline our proof in the special case where $k = 3$, which motivates the definition of local rank and our subsequent arguments; the reader interested only in this special case may skip \cref{sec:Linear-Algebra} except for \cref{cor:algrank}.
The core of the proof is found in \cref{sec:lr,sec:LR-AR,sec:prlr}:
\cref{sec:lr} formally defines local rank and develops some of its properties;
\cref{sec:LR-AR,sec:prlr} prove \cref{main:LR-AR,main:LR-PR}, respectively.
We conclude with \cref{sec:mainproof}, which proves \cref{main:main} and its corollaries.

\section{Preliminaries} \label{sec:prelim}
\subsection{Vector and tensor spaces}
In \cref{sec:prelim,sec:Linear-Algebra,sec:3tensor,sec:lr,sec:LR-AR,sec:prlr} of this paper, we will work with tensors using the following language: let $\F$ be a field and let $V$ be a finite-dimensional vector space over $\F$. 
Then, for a nonnegative integer $d$, a \emph{tensor} is an element of the set
\[\cT^d = \{T\colon V^d \to V \mid \text{$T$ is a $d$-linear map} \} .\]
This notion of tensor corresponds to the definition mentioned in the introduction in the following way: 
if $T \colon (\F^n)^k \to \F$ is a multilinear polynomial, 
then, setting $V = \F^n$, it corresponds to 
the $(k-1)$-linear map
in $\cT^{k-1}$ given by
$\vx \mapsto (T(\vx, e_1), \ldots, T(\vx, e_n))$,
where $\vx \in (\F^n)^{k-1}$ 
and $e_1,\ldots,e_n \in \F^n$ is the standard basis; 
importantly, elements of $\cT^d$ are $(d+1)$-tensors.
For $T \in \cT^d$, we denote the zero set of $T$ (over $\F$) by
\[\Z(T)=\{ \vx \in V^d \mid T(\vx)=0 \} .\]

Let $\overline \F$ denote the algebraic closure of $\F$.\footnote{One can in fact adapt the proofs in this paper to work with any infinite extension of the ground finite field $\F$ instead of the algebraic closure of $\F$.}
For an $\F$-vector space $W$, let $\overline W$ be the
corresponding $\overline \F$-vector space, 
$W \otimes_\F \overline \F$, treated as a superset of $W$. 
Note that $\overline \cT^d$ can be identified with the set of all $d$-linear maps $T \colon \overline V^d \to \overline V$. 
Let $\Id \in \cT^1$ denote the identity linear map. 
Note that if $T \in \overline \cT^1$ then $T$ is just a linear map, so it makes sense to talk about the rank of $T$; we denote this as $\rank T$. (There is no ambiguity here since rank is invariant under field extension.) Additionally, we let $\ker T$ denote the $\F$-kernel of $T \in \cT^1$, while we let $\bker T$ denote the $\overline \F$-kernel of $T \in \overline \cT^1$. Define $\im T$ and $\bim T$ similarly as the images of $T$ over $\F$ and $\overline \F$.

\subsection{Notation for working with higher-order tensors} \label{subsec:tensornotation}
\renewcommand{\ominus}{\circ}

Let $\NN$ be the set of nonnegative integers,
and for $n \in \NN$ let $[n] = \{1,\ldots,n\}$. 
In this paper, bold upright letters will 
represent a tuple of about $d$ objects, which may be vectors, numbers, or something else. These can always be indexed; that is, $x_i$ will always represent the $i$th element of $\vx$, provided it exists. 
Importantly, we let $\vx'$ denote the tuple obtained from $\vx$ by removing the last coordinate, that is,
\[(x_1,\ldots,x_d)' \coloneqq (x_1,\ldots,x_{d-1}) .\]
We use the following notation for operations on tuples:
\begin{itemize}
    \item For $\vx, \vy \in V^d$ and $0 \leq i \leq d$, let
    \[\vx \ominus_i \vy \coloneqq (x_1,\ldots,x_i,y_{i+1},\ldots,y_d).\]
    That is, $\vx \ominus_i \vy$ is the concatenation $(x_1,\ldots,x_i) \circ (y_{i+1},\ldots,y_d)$.
    \item For $\vx \in V^d$, $v \in V$, and $i \in [d]$, let
    \[\vx \odot_i v \coloneqq (x_1,\ldots,x_{i-1},v,x_{i+1},\ldots,x_d).\]
    For example, $\vx \odot_i x_i = \vx$.
\end{itemize}
We use the following notation for partial assignments to tensors $T \in \cT^d$:
\begin{itemize}
    \item For $v \in V$, let
    \[T[v] \coloneqq T(-,\ldots,-,v) \in \cT^{d-1}.\]
    That is, $T[v] \colon V^{d-1} \to V$ is the tensor slice given by the $(d-1)$-linear map $T[v] \colon \vx \mapsto T(\vx,v)$.
    Note that $T[-]\colon V\to \cT^{d-1}$ is a linear map (i.e., $T[v]$ is linear in $v$).
    \item For $\vx \in V^d$ and $i \in [d]$, let
    \[T[\vx]_i \coloneqq T(x_1,\ldots,x_{i-1},-,x_{i+1},\ldots,x_d) \in \cT^1.\]
    That is, $T[\vx]_i \colon V \to V$ is the ``matrix slice'' given by the linear map $T[\vx]_i \colon v \mapsto T(\vx \odot_i v)$.\footnote{Note that $T[\vx]_i$ does not depend on $x_i$;
    we make this notational choice in order to preserve the indexing.}
    Note that $T[-]_i \colon V^d \to \cT^1$ is a polynomial map (in fact, a $(d-1)$-linear map).
\end{itemize}

\subsection{Polynomial and formal rational maps}
Let $U$ and $W$ be $\F$-vector spaces. 
We let $\Poly(V)$ denote the set of polynomials $f \colon V \to \F$.
More generally, we let $\Poly(U, W)$ denote the set of polynomial maps from $U$ to $W$ (so that $\Poly(V)=\Poly(V,\F)$). 
If $f \in \Poly(U,W)$, $\deg f$ denotes the 
total degree\footnote{Meaning, for $f=(f_1,\ldots,f_m)$, the largest total degree of the polynomials $f_i$. In practice all the polynomials we deal with will be homogeneous, but this is not a crucial detail to keep track of.} of $f$.
Any $f \in \Poly(U,W)$ induces a function $\overline U \to \overline W$, which we identify with $f$.

In this paper, we construct formulas that take a tensor and output a partition rank decomposition of that tensor. These formulas are constructed by applying algebraic operations on other formulas, 
which can be thought of as rational maps from tensors to tensors. 
However, for technical reasons,\footnote{This stems from 
the fact that the domain of a rational function is a bit of a murky notion. 
Defining the domain maximally, by first reducing to lowest terms,
can have an unexpected implication when applying algebraic operations.
Specifically, given rational maps $R_1$ and $R_2$ with respective domains $U_1$ and $U_2$, the domain of their product $R_1R_2$ (or any algebraic expression involving $R_1$ and $R_2$) may be larger than $U_1 \cap U_2$. Thus, even if we show that $R_1$ and $R_2$ satisfy some useful property on their domains, it can be difficult to say the same for $R_1R_2$ at points outside $U_1 \cap U_2$.
Fundamentally, this is because a rational function is defined using an equivalence relation that allows one to freely cancel common factors between the numerator and denominator.
We opt to do away with this equivalence relation, which makes the domain always behave in the expected way.} 
we keep track of both the numerator and denominator separately, without them interacting, that is, without canceling common factors.
Treating a numerator-denominator pair as a single algebraic object, it turns out that we can define enough
operations, such as addition, multiplication, and composition, so that the reader uninterested in technical details may ignore the distinction between rational and formal rational maps when reading the proofs in this paper.

Formally, for $\F$-vector spaces $U$ and $W$, we define $\FRat(U,W) \coloneq \Poly(U,W) \times \Poly(U)$, the set of \emph{formal rational maps} from $U$ to $W$.
Elements of $\FRat(U,W)$ will be denoted using uppercase sans-serif letters and expressed using a thick fraction bar, as in $\rat{R}=\ffrac{F}{g}$, where $F \in \Poly(U,W)$ is a polynomial map and $g \in \Poly(U)$ is a (single) polynomial. 
In conceptualizing a formal rational map $\ffrac{F}{g}$, it may be helpful to think of it as an ordered pair consisting of a standard rational map\footnote{To be very precise, we do not forbid the denominator from being the zero polynomial (there is no harm in doing so as such a formal rational map is everywhere undefined and therefore never used for anything).} $F/g$ together with a subset of its domain on which we care about the behavior of the map (that subset being $\setmid{u \in U}{g(u) \neq 0}$, noting that $g$ is otherwise forgotten once we switch to the standard rational map).
However, in a sense, a formal rational map is actually a simpler object than a standard rational map, in that in working with it we are able to ignore completely the intricacies caused by polynomial factorization.\footnote{It is possible that the numerator and denominator in the formulas of this paper never share common factors outside trivial cases, making formal rational maps unnecessary. However, it is unclear how to prove this.}

For $u \in \overline U$, $w \in \overline W$,
and $\rat{R} = \ffrac{F}{g} \in \FRat(U,W)$, we say that $\rat{R}$ is \emph{defined} at $u$ if $g(u) \neq 0$. 
We write $\rat{R}(u) \equiv w$ if $F(u) = g(u)w$;
note that if $F(u)$ and $g(u)$ are both zero, then $\rat{R}(u)\equiv w$ for all $w$.
More generally, for $H \in \Poly(U,W)$, we write $\rat{R} \equiv H$ if $F = gH$.

One can define a number of algebraic operations on formal rational maps in analogy with rational maps. For a given formal rational map $\rat{R} = \ffrac{F}{g} \in \FRat(U,W)$, define the following:
\begin{itemize}
\item If $\rat{R'} = \ffrac{f'}{g'} \in \FRat(U,\F)$ then $\rat{R'}\rat{R} \coloneqq \ffrac{f'F}{g'g}$.
\item If $H \in \Poly(U,W)$ then $\rat{R}+H = H +\rat{R} \coloneqq \ffrac{F+gH}{g}$.
\item If $H \in \Poly(U',U)$, where $U'$ is another $\F$-vector space, then $\rat{R} \circ H \coloneqq \ffrac{F \circ H}{g \circ H}$.
\end{itemize}
Later in this paper (\cref{subsec:derivatives}) we will further define the derivative of a formal rational map.

Note that $\F$ embeds in $\Poly(U)$ in the obvious way, which subsequently embeds in $\FRat(U,\F)$ by sending $h \mapsto \ffrac{h}{1}$, so we have also defined the multiplication of an element in $\FRat(U,W)$ by a constant or an element in $\Poly(U)$. 

\subsection{Analytic and partition rank of tensors}
One can translate the definitions of analytic and partition rank from multilinear polynomials
to multilinear maps $T \in \cT^d$.
If $\F$ is finite, the \emph{analytic rank} of $T \in \cT^d$ is $\AR(T) \coloneqq -\log_{\abs{\F}} (\abs{\Z(T)}/\abs{V}^d)$.

Let $\cM^k$ denote the set of $k$-linear polynomials $V^k \to \F$ (i.e.\ degree-$k$ multilinear polynomials).  
A nonzero tensor $T \in \cT^d$ is \emph{reducible}
if its components have a (nontrivial) common factor,
or equivalently, if there is a partition $[d] = A \sqcup B$ with $A$ nonempty,
along with a polynomial $P \in \cM^{\abs{A}}$ and a map $T' \in \cT^{\abs{B}}$ (so $T' \colon V^{\abs{B}}\to V$), 
such that $T(\vx) = P(x_i \colon i \in A) \cdot T'(x_j \colon j \in B)$ 
for every $\vx \in V^d$. 
In the future, we will at times abuse notation and abbreviate the above relation as $T = P \otimes T'$, hiding the dependence on $A,B$.
The \emph{partition rank} $\PR(T)$ of $T$ is the minimum $r$ such that $T$ can be written as the sum of $r$ reducible tensors.

In this paper, it will be important to additionally define the partition rank of tensors that are specified by a polynomial map. A nonzero polynomial map $F \in \Poly(W,\cT^d)$, where $W$ is an arbitrary $\F$-vector space,  
is \emph{reducible} 
if its components have a (nontrivial) common factor,
or equivalently,
if there is a partition $[d] = A \sqcup B$ with $A$ nonempty, along with polynomial maps $F_1 \in \Poly(W, \cM^{\abs{A}})$ and $F_2 \in \Poly(W ,\cT^{\abs{B}})$, such that
\[F(w)(\vx) = F_1(w)(x_i \colon i \in S) \cdot F_2(w)(x_j \colon j \in B)\]
as polynomials in $w$ and $\vx$.
As above, $\PR(F)$ is the minimum number of reducible polynomial maps needed to sum to $F$. 

An observation that we will use repeatedly is that if $G \in \Poly(W',W)$ is a polynomial map, then 
$\PR(F \circ G) \leq \PR(F)$. 
Finally, for a formal rational map into $\cT^d$, that is, $\ffrac{F}{g}\in \FRat(W, \cT^d)$,
we define 
$\PR(\ffrac{F}{g}) = \PR(F)$.

\section{Polynomial Identities from Linear Algebra}\label{sec:Linear-Algebra}

In this section we will generalize certain classical matrix identities.
Let $\F^{m \times n}$ be the vector space of $m \times n$ matrices with entries in $\F$,
and let $\Id_n \in \F^{n \times n}$ denote the $n\times n$ identity matrix. 
Given a matrix $A \in \F^{m \times n}$ and subsets $I \subseteq [m]$ and $J \subseteq [n]$, let $A_{I \times J}$ denote the $\abs{I} \times \abs{J}$-submatrix of $A$ consisting of the rows with indices in $I$ and columns with indices in $J$. 
Given a square matrix $A \in \F^{n \times n}$, let $\adj(A) \in \F^{n \times n}$ denote the adjugate matrix of $A$,
and recall the identity $A\adj(A) = \adj(A) A = \det(A)\Id_n$.
We will frequently use the fact that a matrix has rank smaller than $r$ if and only if all its $r \times r$ minors are zero.

\begin{lemma}\label{lemma:adj}
Let $A\in \F^{m\times n}$, $I\in \binom{[m]}{r}$, and $J \in \binom{[n]}{r}$. 
If $\rank A \le r$ then 
\begin{equation}\label{eq:adj-weak}
    A_{[m] \times J} \adj (A_{I \times J}) A_{I \times [n]} = \det(A_{I \times J}) A.
\end{equation}
If $\rank A < r$ then, moreover, 
\begin{equation}\label{eq:adj-strong}
    \adj (A_{I \times J}) A_{I \times [n]}=0.
\end{equation}
\end{lemma}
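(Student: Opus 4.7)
The plan is to prove both identities entry by entry, exploiting the fact that $\rank A \le r$ forces every $(r+1)\times(r+1)$ submatrix of $A$ to have vanishing determinant. Throughout, abbreviate $M = A_{I \times J}$.

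For \eqref{eq:adj-weak}, fix $p \in [m]$ and $q \in [n]$, and consider the $(r+1)\times(r+1)$ matrix
\[B_{p,q} = \begin{pmatrix} M & A_{I \times \{q\}} \\ A_{\{p\} \times J} & A_{p,q} \end{pmatrix}.\]
When $p \notin I$ and $q \notin J$, this is, up to row and column permutation, an $(r+1)\times(r+1)$ submatrix of $A$; when $p \in I$ or $q \in J$, it has a repeated row or column. Either way $\det B_{p,q} = 0$. I would then apply the Schur-complement polynomial identity
\[\det \begin{pmatrix} M & u \\ v^T & a \end{pmatrix} = a \det M - v^T \adj(M)\, u,\]
verified first for invertible $M$ (via $M^{-1} = \adj(M)/\det M$) and extended by the polynomial-identity principle in the entries of $M, u, v, a$. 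Applied to $B_{p,q}$, this yields $A_{p,q}\det M = A_{\{p\}\times J}\adj(M)\, A_{I\times\{q\}}$, which is exactly the $(p,q)$-entry of \eqref{eq:adj-weak}.

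For \eqref{eq:adj-strong}, the stronger hypothesis $\rank A < r$ implies in particular $\det M = 0$. I would compute the $(j, q)$-entry of $\adj(M)\, A_{I \times [n]}$ for $j \in J$ and $q \in [n]$ in two cases. If $q \in J$, this entry equals the $(j, q)$-entry of $\adj(M)\, M = \det(M)\Id_r = 0$. If $q \notin J$, let $J' = (J \setminus \{j\}) \cup \{q\}$, an $r$-subset of $[n]$; since $\rank A < r$ we have $\det A_{I \times J'} = 0$, and Laplace expansion of this determinant along the column indexed by $q$ gives
\[\sum_{i \in I} (-1)^{\mathrm{pos}(i) + \mathrm{pos}'(q)} A_{i,q} \det A_{I \setminus \{i\} \times J \setminus \{j\}} = 0,\]
where $\mathrm{pos}(i)$ is the position of $i$ in the sorted set $I$ and $\mathrm{pos}'(q)$ the position of $q$ in the sorted set $J'$. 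Since $\adj(M)_{j,i} = (-1)^{\mathrm{pos}(i) + \mathrm{pos}(j)} \det A_{I \setminus \{i\} \times J \setminus \{j\}}$, the sum $\sum_{i \in I} \adj(M)_{j,i}\, A_{i,q}$ differs from the one above only by the global sign $(-1)^{\mathrm{pos}'(q) - \mathrm{pos}(j)}$, which is independent of $i$, and therefore also vanishes.

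The main obstacle is the sign bookkeeping in the Laplace expansion for \eqref{eq:adj-strong}; once one recognizes that the $i$-dependent signs in the adjugate formula and in the column expansion agree up to an $i$-independent factor, the rest is immediate from the Schur-complement identity and the vanishing of the relevant minors.
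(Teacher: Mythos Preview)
Your proof is correct, and it takes a genuinely different route from the paper's. The paper argues via rank comparisons and column/row-span factorizations: it shows that whenever $\rank A_{I\times J}=\rank A_{I\times[n]}<r$ one can write $A_{I\times[n]}=A_{I\times J}M$ and deduce \eqref{eq:adj-strong} from $\adj(A_{I\times J})A_{I\times J}=\det(A_{I\times J})\Id_r$, and similarly factors $A=M A_{I\times[n]}$ when $\rank A=\rank A_{I\times[n]}$ to get \eqref{eq:adj-weak}; the remaining cases are handled by a short case analysis on whether $\rank A_{I\times J}\ge r-1$. Your argument instead works entry by entry with determinantal identities: the Schur-complement formula reduces \eqref{eq:adj-weak} to the vanishing of bordered $(r+1)\times(r+1)$ minors, and \eqref{eq:adj-strong} is read off from a Laplace expansion of an $r\times r$ minor along a single column. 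Your approach avoids all case analysis and yields the identities uniformly, at the cost of some sign bookkeeping; the paper's approach is slightly more conceptual and makes the role of the rank hypothesis visible through explicit factorizations rather than vanishing minors.
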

\begin{proof}
First, we claim that if $\rank A_{I \times J}=\rank A_{I \times [n]} < r$ then~\labelcref{eq:adj-strong} holds, which in turn implies~\labelcref{eq:adj-weak} since $\det A_{I \times J} = 0$.
Indeed, if $\rank A_{I \times J}=\rank A_{I \times [n]}$ then every column of $A_{I \times [n]}$ is a linear combination of the columns of $A_{I \times J}$, meaning there exists a matrix $M \in \F^{r \times n}$ such that 
$A_{I \times [n]} =  A_{I \times J}M$;
thus, $\adj(A_{I \times J})A_{I \times [n]} =  \det(A_{I \times J})M = 0$, as claimed.

Second, we claim that if $\rank A=\rank A_{I \times [n]}$ then~\labelcref{eq:adj-weak} holds.
Indeed, if $\rank A=\rank A_{I \times [n]}$ then every row of $A$ is a linear combination of the rows of $A_{I \times [n]}$, meaning there exists a matrix $M \in \F^{m \times r}$ such that $A = MA_{I \times [n]}$; 
restricting to the columns in $J$ implies $A_{[m] \times J} = MA_{I \times J}$, and so
$A_{[m] \times J}\adj(A_{I \times J}) = \det(A_{I \times J})M$; multiplying by $A_{I \times [n]}$ on the right
immediately gives
$A_{[m] \times J}\adj(A_{I \times J})A_{I \times [n]} = \det(A_{I \times J})A$, as claimed.

We will repeatedly use the inequalities
$\rank A_{I \times J} \le \rank A_{I \times [n]} \le \rank A$.
We will also henceforth assume $\rank A_{I \times J} \ge r-1$, since otherwise $\adj A_{I\times J} = 0$, as its entries are $(r-1)$-minors, so the stronger guarantee~\labelcref{eq:adj-strong} holds regardless of $\rk A$.

Suppose $\rk A < r$. 
Then $\rk A_{I \times J} = \rk A_{I \times [n]} = r-1$ $(= \rk A)$, 
so~\labelcref{eq:adj-strong} holds by the first claim above.
Suppose $\rk A = r$.
If $\rank A_{I \times [n]} < \rank A$ then $\rk A_{I \times J}=\rk A_{I \times [n]}=r-1$, 
so~\labelcref{eq:adj-strong} again holds by the first claim above.
Otherwise, $\rank A_{I \times [n]} = \rank A$, 
so~\labelcref{eq:adj-weak} holds by the second claim above.
This completes the proof.
\end{proof}

We use~\cref{lemma:adj} to construct (a family of) rational formulas that output a rank-$r$ decomposition (or rank factorization) given a linear map of rank $r$, and which are guaranteed to effectively output $0$ when given a linear map of smaller rank.
In what follows, we choose a basis for $V$, so that $\cT^1=\cT^1(V,\F)$ is identified with $\F^{n\times n}$ where $n=\dim V$.
We also use the fact that adjugation is a polynomial map on matrices, 
$\adj \in \Poly(\F^{r\times r}, \F^{r\times r})$.

\begin{prop}\label{cor:algrank}
    For every $r \in \NN$ there is a set $\cF_{r} \subseteq \FRat(\cT^1, \cT^1)$ satisfying the following properties: 
    \begin{enumerate}
        \item $\PR(\rat{R}) \leq r$ for all $\rat{R} \in \cF_r$;\\
        For all $A \in \overline\cT^1$:
        \item\label{item:F-mat-equiv} If $\rk A \le r$, then $\rat{R}(A) \equiv A$ for all $\rat{R} \in \cF_r$;
        \item\label{item:F-mat-def} If $\rk A = r$, then there is some $\rat{R} \in \cF_{r}$ that is defined at $A$.
    \end{enumerate}
\end{prop}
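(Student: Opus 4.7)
The plan is to exhibit $\cF_r$ explicitly via rank factorizations coming from minors, so that each ingredient of the proposition is controlled by \cref{lemma:adj}. Fix a basis of $V$, identifying $\cT^1$ with $\F^{n\times n}$ (where $n=\dim V$). For each pair $I,J \in \binom{[n]}{r}$ of $r$-subsets, define
\[
\rat{R}_{I,J} \coloneqq \ffrac{A_{[n]\times J}\,\adj(A_{I\times J})\,A_{I\times[n]}}{\det(A_{I\times J})}
\]
as a formal rational map $\cT^1 \to \cT^1$ in the variable matrix $A$, and set $\cF_r = \{\rat{R}_{I,J}\colon I,J\in\binom{[n]}{r}\}$. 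I expect the three properties to follow almost immediately from \cref{lemma:adj}.

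For property (2), equation~\labelcref{eq:adj-weak} of \cref{lemma:adj} states that whenever $\rk A \le r$, the numerator $N(A)\coloneq A_{[n]\times J}\,\adj(A_{I\times J})\,A_{I\times[n]}$ equals $\det(A_{I\times J})\cdot A$; by the definition of $\equiv$ for formal rational maps, this is exactly $\rat{R}_{I,J}(A)\equiv A$. For property (3), any $A \in \overline{\cT^1}$ with $\rk A = r$ must have a nonsingular $r\times r$ submatrix $A_{I\times J}$, and then $\det(A_{I\times J})\ne 0$, so the corresponding $\rat{R}_{I,J}$ is defined at $A$.

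For property (1) I need to show $\PR(N) \le r$ as a polynomial map $\cT^1 \to \cT^1$. Here I use the explicit rank factorization: letting $u_k \in \Poly(\cT^1, V)$ be the $k$-th column of $A_{[n]\times J}\,\adj(A_{I\times J})$ and $v_k \in \Poly(\cT^1, V^*)$ be the $k$-th row of $A_{I\times[n]}$, I have
\[
N(A) = \sum_{k=1}^{r} u_k(A)\otimes v_k(A).
\]
Each summand $A \mapsto u_k(A)\otimes v_k(A)$ is a polynomial map into $\cT^1$ whose value is always a rank-one matrix, factoring as a linear form on $V$ times a vector in $V$; in the partition-rank terminology of \cref{subsec:tensornotation} (taking the partition $[1]=\{1\}\sqcup\emptyset$ of the single slot), this summand is reducible. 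Summing over $k$ gives $\PR(N)\le r$, hence $\PR(\rat{R}_{I,J})\le r$ by the definition of partition rank for formal rational maps.

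The main obstacle, if any, is purely conceptual: one has to notice that the partition rank of $N$ as a \emph{polynomial map} is $\le r$ for exactly the same reason that the pointwise rank of $N(A)$ is $\le r$, because the rank-$r$ factorization is itself written in terms of polynomials in $A$; this is precisely the content of the factorization above. All nontrivial algebraic work is already packaged in \cref{lemma:adj}, whose equation~\labelcref{eq:adj-weak} is exactly calibrated to reconstruct $A$ rationally from a single $r\times r$ minor provided $\rk A \le r$.
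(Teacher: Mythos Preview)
Your proposal is correct and essentially identical to the paper's proof: both define $\cF_r$ as the family $\rat{R}_{I,J}(A)=\ffrac{A_{[n]\times J}\adj(A_{I\times J})A_{I\times[n]}}{\det(A_{I\times J})}$, invoke \cref{lemma:adj} for property~(2), use the existence of a nonvanishing $r\times r$ minor for property~(3), and bound $\PR$ via a column-times-row expansion of the numerator. The only cosmetic difference is that the paper groups the product as $A_{[n]\times J}$ times $\adj(A_{I\times J})A_{I\times[n]}$ when splitting into rank-one summands, whereas you group it the other way.
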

\begin{proof}
    For every $I,J \in \binom{[n]}{r}$, define 
    $\rat{R}_{I,J} \in \FRat(\cT^1, \cT^1)$ as
    \begin{equation}\label{eq:matrix-PR-formula}
        \rat{R}_{I,J}(A) = \ffrac{A_{[n] \times J} \adj (A_{I \times J}) A_{I \times [n]}}{\det A_{I \times J}}.   
    \end{equation}
    Let $\cF_{r}=\{ \rat{R}_{I,J} \mid I,J \in \binom{[n]}{r}\}$.
    Note that $\PR(\rat{R}_{I,J}) \le r$ since
    \[ A_{[n] \times J} \adj (A_{I \times J}) A_{I \times [n]}
    = \sum_{i=1}^r (A_{[n] \times J})_{[n] \times \{i\}} (\adj (A_{I \times J}) A)_{\{i\} \times [n]}.\]
    If $\rk A \le r$ then, by \cref{lemma:adj}, $\rat{R}_{I,J}(A) \equiv A$.
    If $\rk A = r$ then $\rat{R}_{I,J}(A)$ is defined for any $I,J \in \binom{[n]}{r}$ satisfying $\rk(A_{I,J})=r$, since $\det(A_{I \times J}) \neq 0$.
    This completes the proof.
\end{proof}

Another use of \cref{lemma:adj} is in the construction of a family of projection maps, given here in \cref{def:Proj} and \cref{cor:algproj}, which will be useful in \cref{subsec:reduction-op,subsec:rigid}. As mentioned in the outline, the reader interested in \cref{sec:3tensor} can skip the rest of this section. 

Given $A \in \F^{m\times n}$, $I\in \binom{[m]}{r}$ and $J \in \binom{[n]}{r}$ for $r \le \min\{m,n\}$,
we define a pair of square matrices $P_{I,J}(A), Q_{I,J}(A) \in \F^{n \times n}$ by
\begin{equation}\label{eq:P-Q}
    Q_{I,J}(A) \coloneqq \begin{pmatrix} \adj (A_{I \times J}) A_{I \times [n]} \\ 0\end{pmatrix}
    \quad\text{ and }\quad
    P_{I,J}(A) \coloneqq \det(A_{I \times J})\, \Id_n - Q_{I,J}(A),
\end{equation}
where $\begin{psmallmatrix} M \\ 0 \end{psmallmatrix}$ denotes the 
$n \times n$
matrix obtained by inserting $M \in \F^{r \times n}$ into the rows indexed by $J$ (not $I$!).

\begin{lemma}\label{lemma:P-Q}
Let $A \in \F^{m\times n}$, $I\in \binom{[m]}{r}$, and $J \in \binom{[n]}{r}$. 
If $\rank A \le r$ then $\im P_{I,J}(A) \sub \ker A$, with equality if $\det A_{I \times J} \neq 0$.
If $\rank A < r$ then, moreover, $P_{I,J}(A) = Q_{I,J}(A) = 0$. 
\end{lemma}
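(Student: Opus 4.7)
The plan is to handle the two claims separately, treating the low-rank case first since it falls out immediately from \cref{lemma:adj}, and then analyzing the rank-$r$ case via a direct computation that reduces the needed identity to \labelcref{eq:adj-weak}.

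First, for the case $\rank A < r$: I would invoke \labelcref{eq:adj-strong} directly to get $\adj(A_{I\times J}) A_{I \times [n]} = 0$, so $Q_{I,J}(A) = 0$ by definition. Moreover, since $A_{I \times J}$ is an $r\times r$ submatrix of a matrix of rank below $r$, we have $\det A_{I \times J} = 0$, and therefore $P_{I,J}(A) = 0 \cdot \Id_n - 0 = 0$ as well. This settles the second claim.

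Next, for the containment $\im P_{I,J}(A) \sub \ker A$ when $\rank A \le r$, the key is to compute $A P_{I,J}(A) = \det(A_{I\times J}) A - A Q_{I,J}(A)$ and show it vanishes. The observation is that $Q_{I,J}(A)$ has all rows zero except for the rows indexed by $J$, which are $\adj(A_{I \times J}) A_{I \times [n]}$; by the standard column-by-row interpretation of matrix multiplication, $A Q_{I,J}(A) = A_{[m]\times J} \adj(A_{I \times J}) A_{I\times [n]}$. Now applying \labelcref{eq:adj-weak} from \cref{lemma:adj}, this is precisely $\det(A_{I\times J}) A$, so $AP_{I,J}(A) = 0$ and the containment follows.

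Finally, I need to upgrade containment to equality under the additional hypothesis $\det A_{I \times J} \neq 0$. Note this hypothesis forces $\rank A = r$, so $\dim \ker A = n - r$, which pins down the target dimension. On the other hand, writing $c = \det A_{I \times J}$ and $Q = Q_{I,J}(A)$, the restriction of $P_{I,J}(A) = c \Id_n - Q$ to $\ker Q$ coincides with multiplication by the nonzero scalar $c$, hence is injective on $\ker Q$. Since the $r$ nonzero rows of $Q$ are $\adj(A_{I \times J}) A_{I \times [n]}$, where $\adj(A_{I \times J})$ is invertible (as $c \neq 0$) and $A_{I\times [n]}$ has rank $r$, we have $\rank Q = r$ and so $\dim \ker Q = n - r$. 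Therefore $\dim \im P_{I,J}(A) \geq n - r = \dim \ker A$, forcing equality. There is no real obstacle here; the main conceptual step is simply recognizing that $AQ_{I,J}(A)$ reassembles itself into the left-hand side of \labelcref{eq:adj-weak}, so that \cref{lemma:adj} does essentially all the work.
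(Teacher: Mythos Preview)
Your proof is correct and follows essentially the same approach as the paper: both reduce to \cref{lemma:adj} via the identity $A Q_{I,J}(A) = A_{[m]\times J}\adj(A_{I\times J})A_{I\times[n]}$, and both finish the equality case by a dimension count. The only cosmetic difference is in justifying $\rank P_{I,J}(A) \geq n-r$: the paper simply says ``by construction'' (the rows outside $J$ are $c$ times identity rows), whereas you argue via injectivity of $c\Id_n - Q$ on $\ker Q$; both are valid.
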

\begin{proof}
Suppose $\rank A < r$. 
By \cref{lemma:adj} we have $Q_{I,J}(A)=0$. Moreover, since $\det A_{I \times J} = 0$, we have $P_{I,J}(A) = 0$.

Suppose $\rank A = r$. 
By construction, $AQ_{I,J}(A) = A_{[m] \times J}(Q_{I,J}(A))_{J \times [n]} = A_{[m] \times J} \adj(A_{I \times J}) A_{I \times [n]}$.
Thus, by \cref{lemma:adj}, $AQ_{I,J}(A) = \det(A_{I \times J})A$, which implies that $AP_{I,J}(A) = 0$, that is, $\im P_{I,J}(A) \sub \ker A$. 
Suppose further that $\det(A_{I\times J}) \neq 0$.
By construction, we have $\rank P_{I,J}(A) \ge n - r$.
Since $\im P_{I,J}(A) \sub \ker A$, we have 
$\rank P_{I,J}(A) = \dim(\im P_{I,J}(A)) \le \dim(\ker A) = n-\rk A = n-r$, which implies $\im P_{I,J}(A) = \ker A$. This completes the proof.
\end{proof}

\begin{example}\label{ex:diagonal-proj}
    Let $D$ be diagonal, say $D=\diag(d_1,...,d_k,0,...,0)$ with $d_i \neq 0$,
    and put $c=d_1\cdots d_k$.
    For $I=J=[k]$, the submatrix $D'=D_{I \times J}$ satisfies
    $\adj(D') = c (D')^{-1}$,
    so
    \[Q_{I,J}(D) = c
    \begin{psmallmatrix}
            I_k & 0\\
            0 & 0
    \end{psmallmatrix}
    \quad\text{ and }\quad
    P_{I,J}(D) = c
    \begin{psmallmatrix}
            0 & 0\\
            0 & I_{n-k}
    \end{psmallmatrix}.\]
    We then have $\ker D = \Span\{e_i \mid i\ge n-k\} = \im P_{I,J}(D)$.\\
    For $I=J=[k+1]$ however, $D'=D_{I \times J}$ has 
    $\adj(D') = \diag(0,\ldots,0,c)$, so $\adj(D') D_{I \times [n]} = 0$,
    implying that $Q_{I,J}(D) = 0$ and $P_{I,J}(D) = 0$.
    More generally, $|I|=|J|>k$ implies $P_{I,J}(D) = 0$.
\end{example}

We next state the above result in terms of families of formal rational maps
into $\cT^1$,
which will be useful later.
We choose a basis for $V$, so that $\cT^1$ is identified with $\F^{n\times n}$ where $n=\dim V$.
Crucially, since adjugation is a polynomial map on matrices, 
so are $P_{I,J},Q_{I,J} \in \Poly(\F^{m\times n}, \F^{n \times n})$.

\begin{definition}\label{def:Proj}
    For every $r \in \NN$, let $\cP_{r} \sub \FRat(\cT^1, \cT^1)$ be the set of formal rational maps
    \[\cP_{r}
    = \bigg\{ A \mapsto \ffrac{P_{I,J}(A)}{\det (A_{I \times J})} \,\bigg\vert\, I,J \in \binom{[n]}{r} \bigg\}
    .\]
\end{definition}

In the following result, for a formal rational map $\rat{P} \in \cP_{r}$, we use the italicized $P$ to denote its numerator (which is a polynomial map $P \in \Poly(\cT^1, \cT^1)$).

\begin{coro}\label{cor:algproj}
    For every $r \in \NN$, the set $\cP_{r}$ satisfies the following properties:
    \begin{enumerate}
    \item For all $\rat{P} \in \cP_r$, $\PR(\Id_n-\rat{P}) \le r$ and $P$ is homogeneous of degree $r$;\\
    For all $A \in \overline\cT^1$:
    \item If $\rk A < r$, then $P(A) = 0$ for all $\rat{P} \in \cP_r$;
    \item If $\rk A \leq r$, then $AP(A)=0$ for all $\rat{P} \in \cP_r$;
    \item If $\rk A = r$ then there is $\rat{P} \in \cP_r$ that is defined at $A$, and $\bim P(A) = \bker A$.
    \end{enumerate}
\end{coro}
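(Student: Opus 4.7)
The plan is to deduce every claim directly from \cref{lemma:P-Q} (applied over both $\F$ and $\overline\F$), combined with an explicit rank-$r$ decomposition of the polynomial map $Q_{I,J}$.

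For part (1), I would first observe that by the definition of $P_{I,J}$ in~\labelcref{eq:P-Q}, we have $\Id_n - \rat{P} = \ffrac{Q_{I,J}}{\det(A_{I \times J})}$, so by definition of partition rank for formal rational maps, $\PR(\Id_n - \rat{P}) = \PR(Q_{I,J})$. To bound $\PR(Q_{I,J})$, I would exploit that the only possibly nonzero rows of $Q_{I,J}(A)$ are those indexed by $J = \{j_1, \ldots, j_r\}$: writing $\phi_k \in \Poly(\cT^1, \cM^1)$ for the linear form on $V$ given by the $k$-th row of $\adj(A_{I \times J}) A_{I \times [n]}$, I get $Q_{I,J}(A)(x) = \sum_{k=1}^r e_{j_k} \cdot \phi_k(A)(x)$. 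Each summand is reducible for the partition $[1] = \{1\} \sqcup \emptyset$, with scalar factor $\phi_k(A) \in \cM^1$ and vector-valued factor $e_{j_k} \in \cT^0 = V$, so $\PR(Q_{I,J}) \le r$. Homogeneity of degree $r$ is then immediate from~\labelcref{eq:P-Q}: $\det(A_{I \times J})$ is homogeneous of degree $r$, and every entry of $Q_{I,J}(A)$ is a product of a degree-$(r-1)$ minor with a degree-$1$ entry.

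Claims (2) and (3) I would then simply read off from \cref{lemma:P-Q}, whose statement and proof work verbatim over any field and in particular over $\overline\F$: when $\rk A < r$ it gives $P_{I,J}(A) = 0$, and when $\rk A \le r$ it gives $\bim P_{I,J}(A) \subseteq \bker A$, that is, $A P(A) = 0$. For part (4), given $\rk A = r$, I would select $I,J \in \binom{[n]}{r}$ for which the submatrix $A_{I \times J}$ is nonsingular; then $\det(A_{I \times J}) \neq 0$, so the corresponding $\rat{P}_{I,J} \in \cP_r$ is defined at $A$, and \cref{lemma:P-Q} supplies the equality $\bim P_{I,J}(A) = \bker A$.

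The corollary is essentially a repackaging of \cref{lemma:P-Q} in the language of formal rational maps, so I do not anticipate any real obstacle; the only point that requires a moment's care is the partition-rank bookkeeping in~(1), which reduces to recognizing that a reducible $1$-tensor (i.e., an element of $\cT^1$ whose components share a common factor) is nothing more than a rank-$1$ linear map, and analogously for polynomial maps into $\cT^1$.
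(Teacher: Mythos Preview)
Your proposal is correct and follows essentially the same route as the paper: reduce (1) to bounding $\PR(Q_{I,J})$ and exhibit an explicit rank-$r$ decomposition, then read (2)--(4) off \cref{lemma:P-Q} over $\overline\F$. The only cosmetic difference is the particular rank-$r$ decomposition of $Q_{I,J}$: you split by the $r$ nonzero rows (pairing $e_{j_k}$ with the $k$-th row of $\adj(A_{I\times J})A_{I\times[n]}$), whereas the paper splits by the matrix-product structure $\adj(A_{I\times J})A_{I\times[n]} = \sum_{i=1}^r (\adj(A_{I\times J}))_{[r]\times\{i\}}(A_{I\times[n]})_{\{i\}\times[n]}$; both are equally valid and neither buys anything the other does not.
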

\begin{proof}
    By construction~\labelcref{eq:P-Q}, 
    for $I,J \in \binom{[n]}{r}$,
    both $P_{I,J},Q_{I,J} \in \Poly(\cT^1,\cT^1)$
    consist of homogeneous polynomials of degree $r$.
    The rational map $\rat{P}_{I,J}\coloneqq A \mapsto \ffrac{P_{I,J}(A)}{\det(A_{I \times J})}$
    satisfies
    $\Id_n - \rat{P}_{I,J} = A \mapsto
    \ffrac{Q_{I,J}(A)}{\det (A_{I \times J})}$,
    so $\PR(\Id_n - \rat{P}_{I,J}) = \PR(Q_{I,J}) \le r$, since
    $\adj (A_{I \times J}) \in \F^{r \times r}$ and therefore
    \[ \adj (A_{I \times J}) A_{I \times [n]}
    = \sum_{i=1}^r (\adj (A_{I \times J}))_{[r] \times \{i\}} (A_{I \times [n]})_{\{i\} \times [n]}.\]
    The remaining desired properties of $\cP_{r}$ follow immediately from \cref{lemma:P-Q} (applied over $\overline\F$).
\end{proof}

\section{A Motivating Example: The 3-Tensor Case} \label{sec:3tensor}
In this section, we aim to sketch the proof of \cref{main:main} in the case where $d = 2$, i.e., for $3$-tensors, the simplest case in which our work introduces new ideas. We will then discuss our general approach in generalizing to larger $d$, with the goal of motivating some of the technical constructions that come later on. This section is purely expository and does not contain any definitions or results that are used later.

Our starting point is \cref{cor:algrank}, which gives 
formal rational maps
$\rat{R}$ of partition rank at most $r$ such that $\rat{R}(A) \equiv A$ for all $A$ with $\rank A \leq r$. In a sense, this is an ``algebraization'' of the fact that $\PR(A) = \rank(A)$ for all $A$ in $\cT^1$.

We apply the above to $T \in \cT^2$ in the following way. Suppose that $\rank T[v] \leq r$ for all $v \in \overline V$. Then, we know that $\rat{R}(T[v]) \equiv T[v]$, where we view both sides as polynomials in $v$. Consider taking the derivative of both sides with respect to $v$, which we will denote using $\nabla$. In general, the derivative of a map $V \to U$ is a map $V \to U \otimes V^*$, where $V^*$ is the dual space of $V$, and in this case, where $U = \cT^1$, $U \otimes V^*$ can be identified with $\cT^2$. It is not hard to show that if $\rat{F} \equiv h$ then $\nabla \rat{F} \equiv \nabla h$, for a suitable definition of $\nabla \rat{F}$. Therefore we conclude that $\nabla \rat{R}(T[v]) \equiv \nabla T[v]$.

Since $T[v]$ is linear in $v$, $\nabla T[v]$ is simply $T$. Moreover, we claim that for any $v$, $\PR(\nabla \rat{R}(T[v])) \leq 2r$. A formal proof of this fact would take us too far afield, but the general idea is that since we can write $\rat{R}(T[v]) = \sum_i a_i(v) \otimes b_i(v)$, we have
\[\nabla \rat{R}(T[v]) = \sum_i \nabla a_i(v) \otimes b_i(v) + a_i(v) \otimes \nabla b_i(v).\]
Of course, since $\rat{R}$ is a formal rational map, the above derivation was not particularly rigorous, but the general idea is accurate. One additional wrinkle is that one needs to be sure that $\rat{R}$ is actually defined at $T[v]$; by \cref{cor:algrank}, a sufficient condition for this to be true for some $\rat{R}$ is that $\rank T[v] = r$ for some $v \in V$. We conclude the following.
\begin{prop}
If $T \in \cT^2$ and $r \in \NN$ are such that $\rank T[v] \leq r$ for all $v \in \overline V$ and $\rank T[v] = r$ for some $v \in V$, then $\PR(T) \leq 2r$.
\end{prop}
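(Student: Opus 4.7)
My plan is to apply \cref{cor:algrank} to $T[v]$ (viewed as a polynomial in $v$ with values in $\cT^1$), and then differentiate the resulting algebraic identity with respect to $v$ to read off $T$ itself. Since $\rank T[v_0] = r$ for some $v_0 \in V$, property~(3) of \cref{cor:algrank} produces a formal rational map $\rat{R} = \ffrac{F}{g} \in \cF_r$ defined at $T[v_0]$, so $g(T[v_0])$ is a nonzero scalar; property~(1) lets me write $F = \sum_{i=1}^{r} v_i \otimes w_i$ for polynomial maps $v_i, w_i \colon \cT^1 \to V$. Because $\rank T[v] \leq r$ for every $v \in \overline V$, property~(2) yields the polynomial identity $F(T[v]) = g(T[v]) \cdot T[v]$ in $\Poly(V, \cT^1)$. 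Setting $a_i(v) = v_i(T[v])$ and $b_i(v) = w_i(T[v])$, this identity reads $\sum_{i=1}^{r} a_i(v) \langle b_i(v), x\rangle = g(T[v]) \cdot T(x,v)$.

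Next I differentiate this identity at $v = v_0$ in an arbitrary direction $w \in V$. Since $v \mapsto T[v]$ is linear, the derivative of $T[v]$ at $v_0$ in direction $w$ is simply $T[w]$; applying the product rule gives
\[\nabla F(T[v_0])[T[w]] = \nabla g(T[v_0])[T[w]] \cdot T[v_0] + g(T[v_0]) \cdot T[w].\]
Solving for $T[w]$ and substituting $T[v_0] = F(T[v_0])/g(T[v_0])$ expresses $T$, up to a nonzero scalar, as an explicit $\cT^2$ tensor in $(x, w)$ built from the derivatives of the $a_i$ and $b_i$.

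What remains is to verify that this expression has partition rank at most $2r$, and this is where the main bookkeeping subtlety lies. The product rule applied to $\nabla F$ produces the $2r$ summands $\nabla a_i(v_0)[w] \cdot \langle b_i(v_0), x\rangle + a_i(v_0) \cdot \langle \nabla b_i(v_0)[w], x\rangle$, each of which is manifestly reducible in $\cT^2$ (the first type as a vector-in-$w$ times a scalar-in-$x$, the second as a constant vector $a_i(v_0)$ times a bilinear scalar in $(x,w)$). The correction term $\nabla g(T[v_0])[T[w]] \cdot T[v_0](x)$ expands, using $T[v_0] = \sum_i a_i(v_0) \otimes b_i(v_0)/g(T[v_0])$, into $r$ pieces, the $i$th of which shares the common vector $a_i(v_0)$ with the $i$th ``second-type'' summand above. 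A naive accounting would treat the correction as an extra reducible summand and yield $\PR(T) \leq 2r+1$; the key observation is that because the correction pieces and the second-type summands share $a_i(v_0)$ as a common vector factor, they merge pairwise into reducible summands of the form $a_i(v_0) \cdot \gamma_i(x, w)$ for bilinear scalars $\gamma_i$. This brings the total down to exactly $2r$ reducible $\cT^2$ tensors, proving $\PR(T) \leq 2r$.
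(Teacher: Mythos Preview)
Your proposal is correct and follows essentially the same approach as the paper: apply \cref{cor:algrank} to the slices $T[v]$, obtain the polynomial identity $F(T[v]) = g(T[v])\,T[v]$, and differentiate in $v$ to recover $T$. The paper's sketch in \cref{sec:3tensor} writes this using the formal rational map $\nabla\rat{R}(T[v])$ and informally claims $\PR(\nabla\rat{R}(T[v])) \le 2r$ via a product rule $\sum_i \nabla a_i \otimes b_i + a_i \otimes \nabla b_i$, explicitly noting that this is ``not particularly rigorous'' because of the denominator; the rigorous version is deferred to \cref{claim:Jac-props}\labelcref{item:J-double}. Your merging of the $\nabla g$ correction term into the $a_i(v_0)\cdot(\cdots)$ summands is precisely the computation carried out there (where one shows $g\nabla F - F\otimes\nabla g = \sum_i P_i \otimes Q_i' + P_i' \otimes Q_i$ with $Q_i' = g\nabla Q_i - Q_i\otimes\nabla g$), so you have in effect supplied the detail that the paper's sketch omits.
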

While this result gives us a semblance of how to bound partition rank in terms of other statistics, in general its conditions are far too restrictive. To make it more applicable, we need to make a refinement.

Instead of supposing that $\rank T[v] \leq r$ for all $v \in \overline V$, we instead assume that $\rank T[v] \leq r$ for all $v \in \overline U$, where $U \subseteq V$ is a subspace of low codimension, which we will call $r_2$. Then, by applying the same logic, one can show that the restriction of $T$ to $V \times U$ has partition rank at most $2r$. To extend this to a decomposition of all of $T$, let $P \colon V \to U$ be a projection operator. Then $T(v_1, v_2) = T(v_1, Pv_2) + T(v_1, (\Id-P)v_2)$. The first term has partition rank at most $2r$. On the other hand, the rank of $\Id - P$ is $r_2$, so the second term has partition rank at most $r_2$. Thus $\PR(T) \leq 2r + r_2$. Renaming $r$ to $r_1$, we now have the following.
\begin{prop}
If $T \in \cT^2$, $r_1, r_2 \in \NN$, and $U \subseteq V$ are such that $\dim V/U = r_2$, $\rank T[v] \leq r_1$ for all $v \in \overline U$, and $\rank T[v] = r_1$ for some $v \in U$, then $\PR(T) \leq 2r_1 + r_2$.
\end{prop}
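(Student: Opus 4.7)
The plan is to reduce to the previous proposition via a projection trick, which the paragraph preceding the statement already hints at. Choose a linear projection $P \colon V \to U$ (so $P^2 = P$ and $\im P = U$), so that $\Id - P$ has rank exactly $r_2$ and restricts to zero on $U$. Write
\[T(v_1, v_2) = T(v_1, Pv_2) + T(v_1, (\Id - P)v_2),\]
denote the two summands by $T_1(v_1, v_2)$ and $T_2(v_1, v_2)$ respectively, and bound each separately using $\PR(T) \leq \PR(T_1) + \PR(T_2)$.

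For $T_2$: since $\Id - P$ has rank $r_2$, we may write $(\Id - P)v_2 = \sum_{i=1}^{r_2} \ell_i(v_2)\, w_i$ for some linear forms $\ell_i$ on $V$ and vectors $w_i \in V$. Then $T_2(v_1, v_2) = \sum_{i=1}^{r_2} \ell_i(v_2)\, T(v_1, w_i)$, exhibiting $T_2$ as a sum of $r_2$ reducible tensors; hence $\PR(T_2) \leq r_2$. For $T_1$: I will invoke the previous proposition with $T_1$ in the role of $T$ and $r_1$ in the role of $r$. The matrix slice is $T_1[v] = T[Pv]$, so for any $v \in \overline V$ we have $Pv \in \overline U$, and the hypothesis gives $\rank T_1[v] = \rank T[Pv] \leq r_1$. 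Moreover, by hypothesis there exists some $v_0 \in U \subseteq V$ with $\rank T[v_0] = r_1$; since $P$ restricts to the identity on $U$, $T_1[v_0] = T[v_0]$ attains rank $r_1$, and this witness is an element of $V$. The previous proposition then gives $\PR(T_1) \leq 2r_1$, and combining the two bounds yields $\PR(T) \leq 2r_1 + r_2$.

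I do not expect any real obstacle here: once the projection decomposition is in place, each hypothesis on $T$ transfers cleanly to $T_1$---the rank bound over $\overline V$ because $P$ is $\F$-linear and therefore sends $\overline V$ into $\overline U$, and the attainment condition because the given witness already lies in $U$, where $P$ acts as the identity. The only conceptual point worth flagging is that one must check the attainment condition over $V$ (not merely $\overline V$), which is precisely what the corresponding hypothesis on $T$ was tailored to provide, and without which the invocation of the previous proposition would fail.
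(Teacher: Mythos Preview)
Your proposal is correct and follows essentially the same approach as the paper: split $T$ via a projection $P\colon V\to U$ into $T(v_1,Pv_2)+T(v_1,(\Id-P)v_2)$, bound the second summand by $r_2$ using $\rank(\Id-P)=r_2$, and bound the first by $2r_1$ via the previous proposition. Your verification of the hypotheses for $T_1$ (in particular that the rank-attainment witness lies in $V$ and survives the projection) is slightly more explicit than the paper's informal sketch, but the argument is the same.
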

This proposition is sufficiently strong to be useful for general tensors in $\cT^2$; to finish the proof we need to show that all tensors $T$ satisfy this condition for some $U$, $r_1$, and $r_2$, with $r_1, r_2$ bounded by some constant times the analytic rank.

The first condition we will analyze is the condition that $\rank T[v] \leq r_1$ for all $v \in \overline U$. In order to work with this condition, we recall the fact that a matrix has rank at most $r_1$ if and only if all its $(r_1+1) \times (r_1+1)$-minors are zero. Put another way, there exists a set $\cS\subseteq \Poly(\cT^1)$ of polynomials of degree $r_1+1$ such that $\rank T[v] \leq r_1$ for all $v \in \overline U$ if and only if $f(T[v]) = 0$ as polynomials in $v$ for all $f \in \cS$. By the Schwartz-Zippel lemma, to show that $f(T[v]) = 0$ as polynomials in $v$, it suffices to show that $f(T[v]) = 0$ with probability greater than $\frac{r_1 + 1}{\abs{\F}}$, where $v \in U$ is chosen randomly. Thus, the first condition will be satisfied if $\rank T[v] \leq r_1$ for greater than $\abs{U} \frac{r_1 + 1}{\abs{\F}}$ values of $v \in U$. This reduction has the advantage of eliminating $\overline \F$. Moreover, we may now eliminate the condition that $\rank T[v] = r_1$ for some $v$, because if no $v\in U$ satisfies $\rank T[v] = r_1$, we may simply decrease $r_1$ until we do find such a $v$. Thus, we have shown the following.
\begin{prop} \label{prop:r1r2}
If $T \in \cT^2$, $r_1, r_2 \in \NN$, and $U \subseteq V$ are such that $\dim V/U = r_2$ and $\rank T[v] \leq r_1$ for more than
$\frac{r_1+1}{\abs{\F}} \abs{U}$ values of $v \in U$, then $\PR(T) \leq 2r_1 + r_2$.
\end{prop}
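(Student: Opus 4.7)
The plan is to reduce to the previous proposition (which requires $\rank T[v]\le r_1$ for all $v\in\overline U$) by means of the Schwartz--Zippel lemma applied to the minor polynomials cutting out the rank condition. Concretely, a matrix $A\in\cT^1$ satisfies $\rank A\le r_1$ if and only if every $(r_1+1)\times(r_1+1)$ minor of $A$ vanishes, and each such minor $m$ is a polynomial of degree $r_1+1$ in the entries of $A$. Since $v\mapsto T[v]$ is a linear map $V\to\cT^1$, the composition $v\mapsto m(T[v])$ is a polynomial on $V$ of degree at most $r_1+1$.

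The main step then runs as follows. If $\abs{\F}\le r_1+1$ then $\frac{r_1+1}{\abs{\F}}\abs{U}\ge\abs{U}$ and the hypothesis is vacuous, so one may assume $\abs{\F}>r_1+1$. After identifying the linear subspace $U$ with $\F^{\dim U}$, let $S=\{v\in U:\rank T[v]\le r_1\}$; by assumption $\abs{S}>\frac{r_1+1}{\abs{\F}}\abs{U}$. For each minor $m$ the polynomial $v\mapsto m(T[v])$ vanishes on $S$, and hence on more than $\frac{r_1+1}{\abs{\F}}\abs{U}=(r_1+1)\abs{\F}^{\dim U-1}$ points of $U$. Since its degree is at most $r_1+1$, the Schwartz--Zippel lemma forces it to be the zero polynomial on $U$, and therefore also on $\overline U$. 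Applying this to all minors yields $\rank T[v]\le r_1$ for every $v\in\overline U$.

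To invoke the previous proposition I still need some $v\in U$ with $\rank T[v]=r_1$. If none exists, set $r_1'=\max_{v\in U}\rank T[v]<r_1$ and replace $r_1$ by $r_1'$: the count hypothesis is trivially satisfied at $r_1'$ (every $v\in U$ works, and $\abs{U}>\frac{r_1'+1}{\abs{\F}}\abs{U}$ since $\abs{\F}>r_1'+1$), while the desired bound $\PR(T)\le 2r_1'+r_2$ is only stronger. After this reduction the previous proposition applies, giving $\PR(T)\le 2r_1+r_2$. I do not anticipate any substantive obstacle: the argument is a careful formalization of the sketch already given in the paper, and the only wrinkles are the Schwartz--Zippel application on $U$ rather than all of $V$ (handled by a choice of basis) together with the degenerate case where no $v\in U$ attains rank $r_1$.
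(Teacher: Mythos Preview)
Your proposal is correct and follows essentially the same approach as the paper's own argument: apply Schwartz--Zippel to the $(r_1+1)$-minor polynomials $v\mapsto m(T[v])$ (of degree $\le r_1+1$) on $U$ to upgrade the density hypothesis to $\rank T[v]\le r_1$ for all $v\in\overline U$, and if no $v\in U$ attains rank exactly $r_1$, decrease $r_1$ and rerun. The paper's sketch says precisely this (``we may simply decrease $r_1$ until we do find such a $v$''), and your handling of the degenerate case $\abs{\F}\le r_1+1$ and of the rerun at $r_1'$ is a faithful formalization.
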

Our final step is to make the choice that $U = \ker T[p,-]_2$ for some $p\in V$ (here $-$ represents a dummy element). In this case, $\dim V/U = \rank T[p,-]_2$. Why is this helpful? One can show relatively easily using Jensen's inequality that if $(p, v)$ are selected uniformly at random from $\Z(T)$, then $\E[\rank T[p,-]_2], \E[\rank T[v]] \leq \AR(T)$. Therefore, $\E[2\rank T[v] + \rank T[p,-]_2] \leq 3\AR(T)$. Now, notice that if we condition on a certain value for $p$, the distribution on $v$ is a uniformly random distribution on $\ker T[p, -]_2$. Hence, there must exist some $p\in V$ such that $\E[2\rank T[v] + \rank T[p,-]_2] \leq 3\AR(T)$, where $v \in \ker T[p, -]_2$ is chosen uniformly at random. By Markov's inequality, we must have
\[2\rank T[v] + \rank T[p,-]_2 \leq \frac{3}{1-\eps} \AR(T)\]
with probability at least $\eps$. Now, if we set $r_2 = \rank T[p,-]_2$ and let $r_1 = \floor{\frac{1}{2}(\frac{3}{1-\eps} \AR(T) - r_2)}$ be the greatest integer with $2r_1 + r_2 \leq \frac{3}{1-\eps} \AR(T)$, we find that the conditions of \cref{prop:r1r2} are satisfied for this choice of $r_1$ and $r_2$, provided that $\eps > \frac{r_1 + 1}{\abs{\F}}$. Since any such $r_1$ must satisfy, say, $r_1 \leq \frac{3}{2(1-\eps)} \AR(T)$, we have just proven the following.
\begin{prop}
Let $T \in \cT^2$ and $\e \in (0,1)$.
If $\abs{\F} > \frac{1}{\eps} \big(\frac{3}{2(1-\eps)} \AR(T) + 1\big)$ then $\PR(T) \leq \frac{3}{1-\eps} \AR(T)$.
\end{prop}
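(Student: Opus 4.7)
The proposition is really just a packaging of the argument already sketched in the paragraphs immediately preceding it, so my proposal is to formalize that outline cleanly. The plan is to produce, via an averaging argument, a single point $p \in V$ and a large set of $v \in U := \ker T[p,-]_2$ for which $\rank T[v]$ is small, and then feed $r_1 = \rank T[v]$ and $r_2 = \rank T[p,-]_2$ into the preceding \cref{prop:r1r2}.

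First, I would establish the two expectations $\E[\rank T[p,-]_2], \E[\rank T[v]] \le \AR(T)$ when $(p,v)$ is drawn uniformly from $\Z(T)$. Using the definition $\AR(T) = -\log_{|\F|}(|\Z(T)|/|V|^2)$, for each fixed $p$ the fiber $\{v : T(p,v)=0\} = \ker T[p,-]_2$ has size $|\F|^{n-\rank T[p,-]_2}$, so $|\Z(T)| = \sum_p |\F|^{n - \rank T[p,-]_2}$. Applying Jensen's inequality to the convex function $|\F|^{-x}$, one gets $|\F|^{-\E_p[\rank T[p,-]_2]} \ge |\Z(T)|/|V|^2 = |\F|^{-\AR(T)}$, hence $\E_p[\rank T[p,-]_2]\le \AR(T)$ where the expectation is over the first marginal of the uniform measure on $\Z(T)$; the same argument with the roles of the two coordinates swapped yields the analogous bound for $\rank T[v]$. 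Linearity gives $\E_{(p,v)\in \Z(T)}[2\rank T[v] + \rank T[p,-]_2] \le 3\AR(T)$.

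Next, since the uniform distribution on $\Z(T)$ can be sampled by first choosing $p$ (from a distribution weighted by $|\ker T[p,-]_2|$) and then $v$ uniformly from $\ker T[p,-]_2$, there is some specific $p^* \in V$ for which $\E_{v \in \ker T[p^*,-]_2}[2\rank T[v] + \rank T[p^*,-]_2] \le 3\AR(T)$. Fix such a $p^*$, set $U = \ker T[p^*,-]_2$ and $r_2 = \rank T[p^*,-]_2 = \dim V/U$. By Markov's inequality applied to the nonnegative random variable $2\rank T[v] + r_2$, the event
\[
2\rank T[v] + r_2 \le \tfrac{3}{1-\eps}\AR(T)
\]
occurs for at least an $\eps$-fraction of $v \in U$. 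Setting $r_1 = \lfloor \tfrac{1}{2}(\tfrac{3}{1-\eps}\AR(T) - r_2)\rfloor$, the same fraction of $v \in U$ then satisfies $\rank T[v] \le r_1$, so strictly more than $\frac{r_1+1}{|\F|}|U|$ values of $v \in U$ have this property precisely when $\eps > (r_1+1)/|\F|$.

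The final step is a sanity check on the field-size hypothesis: we have $r_1 \le \tfrac{3}{2(1-\eps)}\AR(T)$, so the assumption $|\F| > \tfrac{1}{\eps}\big(\tfrac{3}{2(1-\eps)}\AR(T)+1\big)$ guarantees $|\F|\eps > r_1 + 1$, as needed. At this point the hypotheses of \cref{prop:r1r2} are met for our $U$, $r_1$, $r_2$, yielding $\PR(T) \le 2r_1 + r_2 \le \tfrac{3}{1-\eps}\AR(T)$. I don't anticipate a genuine obstacle — every ingredient (Jensen, averaging over the first marginal, Markov, the integrality adjustment, and \cref{prop:r1r2}) is already in hand; the only care needed is to confirm that the integer rounding on $r_1$ does not cost more than the slack built into $r_1 \le \tfrac{3}{2(1-\eps)}\AR(T)$, which it does not.
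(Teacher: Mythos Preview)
Your outline is essentially identical to the paper's sketch, and every step after the expectation bounds is fine. There is, however, one genuine slip in your Jensen step. You write that Jensen applied to the convex function $x\mapsto |\F|^{-x}$ yields $|\F|^{-\E_p[\rank T[p,-]_2]} \ge |\Z(T)|/|V|^2$, with $\E_p$ taken over the first marginal of the uniform measure on $\Z(T)$. This does not work either way: if $p$ is uniform on $V$ then indeed $\E_p[|\F|^{-\rank T[p,-]_2}]=|\Z(T)|/|V|^2$, but Jensen for a convex function gives $f(\E X)\le \E f(X)$, i.e.\ the reverse inequality $\E_{p\in V}[\rank T[p,-]_2]\ge \AR(T)$; if instead $p$ has the first-marginal (fiber-weighted) law, then $\E_p[|\F|^{-\rank T[p,-]_2}]$ is not $|\Z(T)|/|V|^2$ at all.

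The conclusion you want is still true, but the correct Jensen argument (the one the paper uses) applies the \emph{concave} function $t\mapsto -t\log_{|\F|}t$ to $X=|\F|^{-\rank T[p,-]_2}$ with $p$ uniform on $V$: this gives $\E[-X\log_{|\F|}X]\le -\E[X]\log_{|\F|}\E[X]$, and dividing by $\E[X]=|\Z(T)|/|V|^2$ yields exactly $\E_{(p,v)\in\Z(T)}[\rank T[p,-]_2]\le\AR(T)$. With this fix in place your proposal matches the paper's argument line for line.
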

Let's take stock of what we have accomplished. With the benefit of hindsight, we can break the previous proof into two main parts, which correspond to \cref{main:LR-AR,main:LR-PR}, respectively:
\begin{itemize}
    \item A probabilistic argument, in which we show that for every $T \in \cT^2$ and $0 < \eps < 1$ we can find $p \in V$, $(r_1,r_2) \in \NN^2$ such that $\rank T[p, -]_2 = r_2$, $\max_{v \in \ker T[p,-]_2} \rank T[v] = \max_{v \in \bker T[p,-]_2} \rank T[v] = r_1$, and $2r_1 + r_2 \leq \frac{3}{1-\eps} \AR(T)$, provided that $\F$ is sufficiently large.
    \item An algebraic argument, showing that if $\rank T[p, -]_2 = r_2$ and $\max_{v \in \ker T[p,-]_2} \rank T[v] = \max_{v \in \bker T[p,-]_2} \rank T[v] = r_1$, then $\PR(T) \leq 2r_1 + r_2$.
\end{itemize}
The key to generalizing to higher $d$ is to iterate the above process. By using \cref{cor:algproj}, it is possible to create a set of formal rational maps that are formulas that perform the same construction as the algebraic argument, but done entirely using algebra. These formulas will depend on the choice of $p$, $r_1$, and $r_2$, and the condition for the formula working will develop into the local rank. Just like how the formulas in \cref{cor:algrank} work for a bigger class of tensors (those with rank at most $r$) than the tensors at which they're defined (those with rank equal to $r$), these formulas hold for a broader class of $T$ as well; we will provisionally call such $T$ \emph{$(p, r_1, r_2)$-valid}.

Then, by applying the same logic as in the $d = 2$ case, it is true that the partition rank of  $T \in \cT^3$ can be bounded in terms of $r_1,r_2,r_3$ if one can find $p_1,p_2,p\in V$ and $r_1,r_2,r_3\in \NN$ such that $\rank T[p_1,p_2,-]_3 = r_3$ and $T[x_3]$ is $(p,r_1,r_2)$-valid for all $x_3 \in \bker T[p_1,p_2,-]_3$, modulo a definedness condition; thus, it remains to show the probabilistic claim that all tensors with low analytic rank satisfy this condition for some $p_1,p_2,p,r_1,r_2,r_3$. To show this, we need two final insights, which we detail below.

We wish to apply the Schwartz-Zippel lemma to derive this absolute condition from discrete data. However, unlike the $d=2$ case, we must apply the Schwartz-Zippel lemma twice, once for many slices $T[x_3]$ of the tensor to show that they are individually $(p, r_1, r_2)$-valid, and then across all slices to show that $T[x_3]$ is $(p,r_1,r_2)$-valid for all $x_3$. (Fortunately this last step is possible since $(p,r_1,r_2)$-validity is also defined by polynomials of bounded degree, in this case $O(r_1r_2)$). The discrete data needed for this argument to work is that for many $x_3 \in \ker T[p_1,p_2,-]_3$, there must be many $x_2 \in \ker T[p,-,x_3]_2$ such that $\rank T[-,x_2,x_3]_1, \rank T[p,-,x_3]_2, \rank T[p_1,p_2,-]_3$ are small. To arrive at this conclusion, we use a combinatorial lemma that derives this ``two-level'' condition from a ``one-level'' condition, namely, that there exist many pairs $(x_2,x_3)$ such that the above conditions hold.

Our final big insight is to interpret these quantities in terms of a random process by setting $p_1 = p$; then checking the above condition reduces to analyzing quantities of the form $\rank T[p_1,p_2,-]_3$, $\rank T[p_1,-,x_3]_2$, and $\rank T[-,x_2,x_3]_1$, where $x_3 \in \ker T[p_1,p_2,-]_3$ and $x_2 \in \ker T[p_1,-,x_3]_2$. Now, let $(p_1,p_2,x_3)\in \Z(T)$ be uniformly random, and then choose $x_2 \in \ker T[p_1,-,x_3]_2$ uniformly at random. Under this choice, it turns out that $(p_1,x_2,x_3) \in \Z(T)$ is also a uniformly random variable, implying that all three of the ranks above can be bounded in expectation.

At this point, we have uncovered all major ideas in our proof. In summary, the proof consists of
\begin{itemize}
\item an algebraic argument (\cref{main:LR-PR}) which inductively constructs formulas with low partition rank through
the use of derivatives;
\item a condition (the local rank) required for such formulas to work, depending on ranks of the form $\rank T[p_1,\ldots,p_{i-1},-,x_{i+1},\ldots,x_d]_i$ for some $\vx, \vp \in \overline V^d$; and
\item a probabilistic argument (\cref{main:LR-AR}) showing that such a condition holds, consisting of:
\begin{itemize}
\item a random process on $\Z(T)$ (\cref{def:uniform-resample}),
\item bounding the above ranks in expectation (\cref{prop:exp-rk-bound}),
\item translating a ``one-level'' probabilistic condition to a ``multi-level'' condition (\cref{lem:pruning} and \cref{lemma:lexMarkov}), and
\item applying the Schwartz-Zippel lemma many times (\cref{claim:R-LR-bound}).
\end{itemize}
\end{itemize}

\section{Local Rank} \label{sec:lr}
The goal of this section is to define and characterize two closely related notions of rank, which we call the local rank and the algebraic local rank, for tensors $T \in \cT^d$, 
both of which reduce to the standard definition of matrix rank when $d = 1$. 
There are two notable features of our notions of rank. 
First, the local rank of a tensor depends on a point $\vp \in V^d$ and in some sense describes the behavior of $T$ ``near'' the point $\vp$. 
Second, the local rank of a tensor $T \in \cT^d$ is not one number in $\NN$ but a tuple in $\NN^d$.

\subsection{Definition of (algebraic) local rank} 
Before we define local rank, we first make some preliminary definitions.
We henceforth write $\prec$ for the colexicographic order on tuples, i.e.\ for $\va,\vb \in \NN^d$, we have  $\va \prec \vb$ if and only if there exists an $i \in [d]$ with $a_i < b_i$ and $a_j=b_j$ for all $j > i$.
We write $\va \preceq \vb$ if either $\va \prec \vb$ or $\va = \vb$.
Note that $\va \preceq \vb$ is \emph{weaker} than $a_i \le b_i$ for all $i$.

Let us recall the notation $\vx \ominus_i \vy \coloneqq (x_1,\ldots,x_i,y_{i+1},\ldots,y_d)$.
The formal definition of adjoint pairs and local rank follows.

\begin{definition}[adjoint] \label{def:adjoint}
For $T \in \cT^d$, we call $(\vp,\vx) \in V^d \times V^d$ an \emph{adjoint pair} of $T$, denoted $\vp \dashv_T \vx$, if $T(\vp \ominus_i \vx) = 0$ for all $i \in [d-1]$.
\end{definition}
That is, $\vp \dashv_T \vx$ whenever the $d-1$ points
\begin{center}
    $(p_1,p_2,\ldots,p_{d-1},x_d) \in \Z(T)$\\
    $(p_1,p_2,\ldots,x_{d-1},x_d) \in \Z(T)$\\
    $\vdots$\\
    $(p_1,x_2,\ldots,x_{d-1},x_d) \in \Z(T)$
\end{center}
all lie in the zero set of $T$.
Note that \cref{def:adjoint} is independent of $p_d$ and of $x_1$.
Let us recall the notation $T[\vx]_i=T(x_1,\ldots,x_{i-1},-,x_{i+1},\ldots,x_d) \in \cT^1$.

\begin{definition}[local rank]\label{def:LR}
Let $T \in \cT^d$ and $\vp \in V^d$. The \emph{local rank of $T$ at $\vp$} is
    \[\LR_\vp(T) \coloneqq \maxprec_{\vx \in V^{d}\colon \vp \dashv_T \vx} \paren*{
 \rank T[\vp \ominus_1 \vx]_1, \ldots, \rank T[\vp \ominus_d \vx]_d}\]
    where $\max^\prec$ denotes the maximum taken under the order $\prec$. 
    The \emph{algebraic local rank of $T$ at $\vp$} is the local rank of $T$ at $\vp$ when viewing $T$ as a tensor over $\overline \F$.
\end{definition}
Note that $\bLR_\vp$ differs from $\LR_\vp$ in that its maximization is over the infinite vector space $\overline V^{d}$. Trivially, $\LR_\vp(T) \preceq \bLR_\vp(T)$.
It is also worth noting that $\LR_\vp$ is independent of $p_d$ and of $x_1$, that its last coordinate is $\rk T[\vp]_d = \rk T(p_1,\ldots,p_{d-1},-)$ (so does not involve $\vx$), and that its first coordinate is equal to $\rk T[\vx]_1 = \rk T(-,x_2,\ldots,x_d)$ (so does not involve $\vp$).

One important way to conceptualize the definitions of adjointness and local rank is to consider $\vp$ and $\vx$ as representing a walk through $\Z(T)$, where we start at a point $\vp \ominus_{d-1} \vx$, 
and for each $i$ descending from $d-1$ to $1$, we replace $p_i$ with $x_i$, leading to the point $\vp \ominus_{i} \vx$, which must remain in $\Z(T)$. 
Observe that if $\vp \ominus_{i} \vx$ is fixed, then the set of $x_i \in V$ for which $\vp \ominus_{i-1} \vx \in \Z(T)$ is simply $\ker T[\vp \ominus_i \vx]_i$. Thus, the ranks in the definition of local rank represent the ``amount of choice'' we have at any stage in this process. This interpretation will be crucial in \cref{sec:LR-AR}.

We finish this section by giving a recursive definition of local rank.
\begin{prop}\label{prop:lrdefrecur}
Let $d > 1$, $\vp \in V^d$, $T \in \cT^d$, and $\vr\in \NN^d$. Then
\[\LR_{\vec p}(T) = \paren*{\maxprec_{x \in \ker T[\vec p]_d} \LR_{\vec p'}(T[x]), \rank T[\vp]_d}.\]
\end{prop}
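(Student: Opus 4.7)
The statement is essentially definitional, so the plan is simply to unpack \cref{def:adjoint,def:LR} and reorganize the maximum. Write $\vx = (\vx', x_d) \in V^{d-1} \times V$ and set $T' \coloneqq T[x_d] \in \cT^{d-1}$.

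First, I would decompose the adjointness condition $\vp \dashv_T \vx$. For $i \in [d-1]$, we have $\vp \ominus_i \vx = (p_1,\ldots,p_i,x_{i+1},\ldots,x_d)$. The $i = d-1$ equation $T(\vp \ominus_{d-1} \vx) = 0$ reads $T(p_1,\ldots,p_{d-1},x_d) = 0$, i.e.\ $x_d \in \ker T[\vp]_d$. For $i < d-1$, the equation $T(\vp \ominus_i \vx) = 0$ is the same as $T'(\vp' \ominus_i \vx') = 0$, since $T'(\vy) = T(\vy, x_d)$ by definition. Combining, $\vp \dashv_T \vx$ holds if and only if $x_d \in \ker T[\vp]_d$ and $\vp' \dashv_{T'} \vx'$.

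Second, I would decompose the rank tuple. The $d$-th coordinate is $\rank T[\vp \ominus_d \vx]_d = \rank T[\vp]_d$, which is independent of $\vx$. For $i \in [d-1]$,
\[T[\vp \ominus_i \vx]_i = T(p_1,\ldots,p_{i-1},-,x_{i+1},\ldots,x_d) = T'(p_1,\ldots,p_{i-1},-,x_{i+1},\ldots,x_{d-1}) = T'[\vp' \ominus_i \vx']_i,\]
so the first $d-1$ coordinates of the tuple in \cref{def:LR} for $(T,\vp)$ equal the coordinates of the tuple for $(T',\vp')$.

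Finally, I would split the $\prec$-maximum. Since every tuple in the max has the same last coordinate $\rank T[\vp]_d$, the colexicographic comparison between any two such tuples reduces to the colex comparison of their first $d-1$ coordinates. Hence the maximum over $\vx$ with $\vp \dashv_T \vx$ can be carried out iteratively: first over $\vx'$ with $\vp' \dashv_{T'} \vx'$ (yielding $\LR_{\vp'}(T[x_d])$ in the first $d-1$ coordinates by definition), and then over the choices of $x_d \in \ker T[\vp]_d$. Concatenating with the fixed last coordinate $\rank T[\vp]_d$ gives exactly the right-hand side of the claimed identity. There is no real obstacle here beyond careful bookkeeping of indices and the verification that $\prec$ interacts well with fixing the last coordinate.
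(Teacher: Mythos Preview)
Your proposal is correct and follows essentially the same approach as the paper's own proof: both decompose the adjointness condition as $\vp \dashv_T \vx \iff x_d \in \ker T[\vp]_d$ and $\vp' \dashv_{T[x_d]} \vx'$, identify $T[\vp \ominus_i \vx]_i = T[x_d][\vp' \ominus_i \vx']_i$ for $i \in [d-1]$, and then split the colexicographic maximum by peeling off the constant last coordinate. The paper's write-up is slightly terser but the logical content is identical.
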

\begin{proof}
Note that if $\vx \in V^d$, for each $i \in [d-1]$ we have $T[\vec p \ominus_i \vec x]_i = T[x_d][\vec p' \ominus_i \vec x']_i$. Similarly, we have $T(\vec p \ominus_i \vec x) = T[x_d](\vec p' \ominus_i \vec x')$, so $\vec p \dashv_T \vec x$ if and only if $\vec p' \dashv_{T[x_d]} \vec x'$ and $T[x_d](\vec p') = 0$. The latter condition is equivalent to $x_d \in \ker T[\vec p]_d$. Hence we have
\begin{align*}
\LR_{\vec p}(T) &= \maxprec_{\vp \dashv_T \vx} \,\,\, \big(\rank T[\vp \ominus_1 \vx]_1, \ldots, \rank T[\vp \ominus_d \vx]_d \big) \\
 &= \maxprec_{\substack{x_d \in \ker T[\vec p]_d \\ \vec p' \dashv_{T[x_d]} \vec x'}} \paren*{
 \rank T[x_d][\vp' \ominus_1 \vx']_1, \ldots, \rank T[x_d][\vp' \ominus_d \vx']_{d-1}, \rank T[\vp]_d} \\
 &= \paren*{\maxprec_{x_d \in \ker T[\vec p]_d} \LR_{\vec p'}(T[x_d]), \rank T[\vp]_d},
\end{align*}
as desired.
\end{proof}

\subsection{Examples}
Let us give a few simple examples of local rank. 
As before, we let a dash (i.e., $-$) represent a ``dummy element'' of $V$, which will be useful as $T[\vx]_i$ is independent of $x_i$.
Let us also recall the notation $T[v]=T(-,\ldots,-,v) \in \cT^{d-1}$.

\begin{example}
If $d = 1$, then adjointness is a vacuous condition, so the local rank and the algebraic local rank are both equal to matrix rank.
\end{example}

\begin{example}
\label{ex:lr2}
For $T \in \cT^2$, 
we have $\vp \dashv_T \vx$ if $(p_1,x_2) \in \Z(T)$. So, the definition of local rank reads
\begin{align*}
  \LR_\vp(T) &= \maxprec_{(p_1,x_2) \in \Z(T)} (\rank T[p_1,x_2]_1, \rank T[p_1,p_2]_2)
    \\ &= \maxprec_{(p_1,x_2) \in \Z(T)} (\rank T(-,x_2), \rank T(p_1,-))\\
    &= \big(\max_{x \in \ker T(p,-)} \rank T(-,x),\, \rank T(p,-)\big)
    \\&= \big(\max_{x \in U} \rank T[x],\, \codim U \big)
\end{align*}
where $U = \ker T(p,-)$.
In particular, $\LR_{\vec{0}}(T) = (\max_{x \in V} \rank T[x], 0)$, which is effectively the maximum rank of a space of matrices. Over an infinite field, this is 
\emph{commutative rank} \cite{FortinRe04,Wigderson17} of the matrix space $\setmid{T[x]}{x\in V}$. 
Since the commutative rank of a space of matrices is not necessarily the maximum rank, this provides an example of an instance when the local rank and the algebraic local rank can differ.
\end{example}

The identity tensor is known to have partition rank and analytic rank growing with the number of variables (e.g.,~\cite{Lovett19,Naslund20}). 
A similar phenomenon happens for local rank $\LR_\vp$, though of course this depends on the choice of the point $\vp$.

\begin{example}
    Let $d \in \NN$, $V=\F^n$, and $\Id^{(d)}_n \in \cT^d$ be the ``identity tensor''\footnote{Equivalently, the multilinear polynomial $\sum_{i=1}^n \prod_{j=1}^k x_{j,i}$ of degree $k=d+1$ (sum of variable-disjoint monomials).} given by
    \[\Id^{(d)}_n(x_1,\ldots,x_d) = \Big( \prod_{j=1}^d x_{j,1},\ldots, \prod_{j=1}^d x_{j,n} \Big).\]
    For $\vp \in V^d$, let $r$ be the number of coordinates $i \in [n]$ such that $p_{1,i},\ldots,p_{(d-1),i}$ are all nonzero.
    We have $\Id^{(d)}_n[\vp]_d = x_d \mapsto \big( x_{d,1}\prod_{j=1}^{d-1} p_{j,1},\ldots, x_{d,n}\prod_{j=1}^{d-1} p_{j,n} \big)$, and so $\rk \Id^{(d)}_n[\vp]_d = r$.
    In particular, for a ``generic'' $\vp \in V^d$, $\LR_\vp(\Id^{(d)}_n)$ has a coordinate (its last, which is $r$) that grows with $n$.
\end{example}

It is possible to modify the definition of local rank to use any total order on $\NN^d$, and so far we have not used any property of $\prec$ beyond the fact that it extends the product (i.e., coordinatewise) partial order on $\NN^d$.
The next example shows that the choice of order can have a dramatic effect. By \cref{ex:lr2}, such an example would have to be of a tensor in $\cT^d$ for $d \ge 3$.
\begin{example} \label{ex:lr3}
Let $n \in \NN$, $V = \F^{n+1}$, and consider the tensor $T \in \cT^3(V,\F)$ given by
\[T(x,y,z) = (x_1y_1z_1 + x_2y_1z_2,\, x_3y_1z_2,\,\ldots,\, x_{n+1}y_1z_2,\,0).\]
Let $\vp = \big(e_1,0,-)$, where $e_1$ is the first standard unit vector.
For $\vx=(a,b,c) \in V^3$, we have $\vp \adjoint{T} \vx$ if and only if
$T(e_1,0,c)=0$ and $T(e_1,b,c)=0$,
which is equivalent to $(b_1c_1, 0,\ldots,0)=0$, or simply $b_1c_1=0$.
Moreover, for such an $\vx$ we have
\begin{align*}
    T[\vp \ominus_1 \vx]_1 &= \paren[\big]{v \mapsto b_1c_2(v_2, v_3,\ldots, v_{n+1},0)} , \\
T[\vp \ominus_2 \vx]_2 &= \paren[\big]{v \mapsto (c_1v_1,0,\ldots,0)} , \\
T[\vp \ominus_3 \vx]_3 &= \paren[\big]{v \mapsto (0,0,\ldots,0)} .
\end{align*}
For the first two matrix slices, this means that
\[\setmid[\big]{ (\rk T[\vp \ominus_1 \vx]_1, \rk T[\vp \ominus_2 \vx]_2) }{ \vp \adjoint{T} \vx } 
= \set{ (0,0), (0,1), (n,0) } .\]
Although $n$ can be arbitrarily large, local rank maximizes based on $\prec$, so
$\LR_\vp(T) = (0,1,0)$.
\end{example}

\subsection{Algebraic definition of algebraic local rank}\label{subsec:reduction-op}

Our goal is to prove that algebraic local rank can be defined by low-degree polynomial equations.
Towards this goal, we use rational maps that parameterize (or surjectively project onto) the kernel of a matrix---given in \cref{def:Proj}.
This will be extended to a parameterization of adjoint pairs, leading to an algebraic definition of algebraic local rank (\cref{lem:lralgrecur} below), and then to the goal stated above (\cref{prop:ddeg} below).

It will be useful to first define the following reduction map, which is used to parameterize 
the tensor slices $T[x]$ as $x$ varies on a subspace.

\begin{definition}[Tensor reduction operator]\label{def:tensor-proj}
For $\vp\in V^d$ and $P \in \Poly(\cT^1, \cT^1)$, 
define the polynomial map $\Red^P_\vp \in \Poly(\cT^d \times V, \cT^{d-1})$ as 
\[\Red^P_{\vp}(T, v) \coloneqq T[P(T[\vp]_d)v].\]
\end{definition}

\begin{example}
    If $T[\vp]_d$ is diagonal, say $\diag(d_1,\ldots,d_k,0,\ldots,0)$, then for some $\rat{P} \in \cP_k$ (recall \cref{def:Proj} and \cref{ex:diagonal-proj}) 
    we have
    $P(T[\vp]_d) = c\begin{psmallmatrix} 0 & 0\\0 & I_{n-k} \end{psmallmatrix}$ for $c=d_1\cdots d_k$,
    and therefore $\Red^P_{\vp}(T, v) = T[v']$ $(\in \cT^{d-1})$ where $v'=c(0,\ldots,0,v_{k+1},\ldots,v_n)$.
\end{example}

Next we give an algebraic analogue of \cref{prop:lrdefrecur},
using the family of projection maps $\cP_{r} \sub \FRat(\cT^1, \cT^1)$ (\cref{def:Proj}).
We continue the convention that for $\rat{P} \in \cP_{r}$, the italicized $P$ denotes its numerator (a polynomial map $P \in \Poly(\cT^1, \cT^1)$).
Let us also recall that statement of \cref{cor:algproj}.
\begin{quote}
    \begin{enumerate}
    \item For all $\rat{P} \in \cP_r$, $\PR(\Id_n-\rat{P}) \le r$ and $P$ is homogeneous of degree $r$;\\
    For all $A \in \overline\cT^1$:
    \item If $\rk A < r$, then $P(A) = 0$ for all $\rat{P} \in \cP_r$;
    \item If $\rk A \leq r$, then $AP(A)=0$ for all $\rat{P} \in \cP_r$;
    \item If $\rk A = r$ then there is $\rat{P} \in \cP_r$ that is defined at $A$, and $\bim P(A) = \bker A$.
    \end{enumerate}
\end{quote}

\begin{lemma}\label{lem:lralgrecur}
    For every $T \in \overline\cT^d$, $\vp \in  V^d$, and $\vr\in \NN^d$,
    we have that $\bLR_\vp(T) \preceq \vr$ if and only if $\rank T[\vp]_d \leq r_d$ and $\bLR_{\vp'}(\Red^P_{\vp}(T,v)) \preceq \vr'$ for all 
    $\rat{P} \in \cP_{r_d}$ and $v \in \overline V$.
\end{lemma}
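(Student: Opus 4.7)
The plan is to reduce to the recursive characterization of algebraic local rank (Proposition \ref{prop:lrdefrecur}, applied over $\overline{\F}$), and then translate the quantification over $\bker T[\vp]_d$ into a quantification over $\rat{P} \in \cP_{r_d}$ and $v \in \overline V$ using Corollary \ref{cor:algproj}. First I would unpack the colexicographic condition: by Proposition \ref{prop:lrdefrecur} over $\overline\F$,
\[\bLR_\vp(T) = \paren*{\maxprec_{x \in \bker T[\vp]_d} \bLR_{\vp'}(T[x]),\; \rank T[\vp]_d},\]
and by definition of $\prec$, $\bLR_\vp(T) \preceq \vr$ holds iff either $\rank T[\vp]_d < r_d$, or $\rank T[\vp]_d = r_d$ and $\bLR_{\vp'}(T[x]) \preceq \vr'$ for every $x \in \bker T[\vp]_d$. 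In particular this forces $\rank T[\vp]_d \le r_d$ in all cases.

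The main bridge is the observation that, as $\rat{P}$ ranges over $\cP_{r_d}$ and $v$ over $\overline V$, the vectors $P(T[\vp]_d)v$ parameterize exactly $\bker T[\vp]_d$ when $\rk T[\vp]_d = r_d$, and collapse to $0$ when $\rk T[\vp]_d < r_d$. Concretely, property (3) of Corollary \ref{cor:algproj} ($A P(A) = 0$ when $\rk A \le r_d$) ensures $P(T[\vp]_d)v \in \bker T[\vp]_d$ in both cases, so $\Red^P_\vp(T,v) = T[P(T[\vp]_d)v]$ lands in $\{T[x] : x \in \bker T[\vp]_d\}$; property (2) ($P(A)=0$ when $\rk A < r_d$) gives $\Red^P_\vp(T,v) = T[0] = 0$ in the strict-inequality regime; and property (4) produces some $\rat{P} \in \cP_{r_d}$ with $\bim P(T[\vp]_d) = \bker T[\vp]_d$ in the equality regime, ensuring every $x \in \bker T[\vp]_d$ is actually realized as $P(T[\vp]_d)v$ for suitable $v$.

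Having set up these parameterization facts, the proof finishes by a three-case check. If $\rk T[\vp]_d > r_d$, both sides of the claimed equivalence fail (the LHS by the last-coordinate inspection, the RHS by the explicit bound $\rk T[\vp]_d \le r_d$). If $\rk T[\vp]_d < r_d$, the LHS holds automatically via the last coordinate, and the RHS holds because $\Red^P_\vp(T,v) \equiv 0$ for all $\rat{P},v$, whose algebraic local rank is the zero tuple, which is $\preceq \vr'$. If $\rk T[\vp]_d = r_d$, the RHS condition $\bLR_{\vp'}(\Red^P_\vp(T,v)) \preceq \vr'$ for all $\rat{P}, v$ is, by the parameterization above, exactly the condition $\bLR_{\vp'}(T[x]) \preceq \vr'$ for all $x \in \bker T[\vp]_d$, which is the LHS. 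I do not anticipate any serious obstacle: the only subtlety is making sure that the ``collapse to zero'' phenomenon in the $\rk T[\vp]_d < r_d$ case cleanly matches the colex convention (which counts a strict last-coordinate drop as automatic $\prec$), so that neither direction of the iff gets spurious constraints; this is exactly what properties (2)--(4) of Corollary \ref{cor:algproj} are designed for.
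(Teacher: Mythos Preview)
Your proposal is correct and follows essentially the same approach as the paper: both reduce to the recursive formula of \cref{prop:lrdefrecur} over $\overline\F$, then use properties (2)--(4) of \cref{cor:algproj} to translate the quantifier over $\bker T[\vp]_d$ into one over $\rat{P}\in\cP_{r_d}$ and $v\in\overline V$, and finish with the same three-case split on $\rank T[\vp]_d$ versus $r_d$. The only cosmetic difference is that the paper first records the intermediate identity $\bLR_\vp(T) = \paren[\big]{\maxprec_{\rat{P},v}\bLR_{\vp'}(\Red^P_\vp(T,v)),\rank T[\vp]_d}$ with $\rat{P}$ ranging over $\cP_{\rk T[\vp]_d}$, while you go directly to the $\cP_{r_d}$ formulation.
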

\begin{proof}
    We first claim that
    \begin{equation}\label{eq:alg-LR}
    \bLR_\vp(T) = \Big(\maxprec_{P,v} \,\,\,\,\, \bLR_{\vp'}(\Red^P_\vp(T,v)),\, \rank T[\vp]_d \Big)
    \end{equation}
    where the maximum is over all $\rat{P} \in \cP_{\rk T[\vp]_d}$ and $v \in \overline V$.
    By \cref{cor:algproj}, we have $\bim P(A) \sub \bker A$, and moreover, there is $\rat{P} \in  \cP_{r}$ such that $\bim P(A)=\bker A$.
    Together with \cref{prop:lrdefrecur}, this implies~\labelcref{eq:alg-LR}.

    First, assume $\rank T[\vp]_d > r_d$;
    then $\bLR_\vp(T) \succ \vr$, so we are done.
    Next, assume $\rank T[\vp]_d < r_d$. Then $\bLR_\vp(T) \prec \vr$, so we need to prove $\bLR_{\vp'}(\Red^P_{\vp}(T,v)) \preceq \vr'$ for $\rat{P} \in \cP_{r_d}$, $v \in \overline V$.
    By \cref{cor:algproj}, $P(T[\vp]_d) = 0$, which implies $\Red^P_{\vp}(T,v)=T[0]=0  \in \overline\cT^{d-1}$.
    Thus, $\bLR_{\vp'}(\Red^P_{\vp}(T,v)) = \vec{0} \preceq \vr'$, and we are done again.
    
    Finally, assume $\rank T[\vp]_d = r_d$. Then $\bLR_\vp(T) \preceq \vr$ if and only if $\bLR_\vp(T)' \preceq \vr'$. 
    By~\labelcref{eq:alg-LR}, this is equivalent to $\bLR_{\vp'}(\Red^P_\vp(T,v)) \preceq \vr'$ for all $\rat{P} \in \cP_{r_d}$ and $v \in \overline V$, 
    as desired. 
    This completes the proof.
\end{proof}

\subsection{Rigidity of algebraic local rank}\label{subsec:rigid}

We can now prove that the set of tensors $\{T \in \overline\cT^d \mid \bLR_{\vec p}(T) \preceq \vec r\}$ is in fact Zariski closed, and moreover, that it has low ``defining degree'': it can be cut out by polynomials of degree at most 
$\prod_{i=1}^d (r_i + 1)$.
The proof follows by combining the recursive algebraic definition of local rank in \cref{lem:lralgrecur} together with the projection maps in \cref{def:Proj}.

\begin{prop} \label{prop:ddeg}
Let $d \geq 1$, $\vp \in  V^d$, and $\vr\in \NN^d$. There exists a set $\cS_{\vp, \vr} \sub \Poly(\cT^d)$ such that for every $T \in \overline \cT^d$, we have $\bLR_{\vec p}(T) \preceq \vec r$ if and only if $f(T) = 0$ for all $f \in \cS_{\vec p, \vec r}$. 
Moreover, the polynomials in $\cS_{\vp, \vr}$ have degree at most $(r_1 + 1)(r_2 + 1) \cdots (r_d + 1)$.
\end{prop}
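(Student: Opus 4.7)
\medskip
\noindent\textbf{Proof plan.} The plan is to proceed by induction on $d$, using the recursive characterization of algebraic local rank in \cref{lem:lralgrecur} as the engine for the inductive step, and the standard minor characterization of matrix rank as the base case.

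For the base case $d=1$, note that $\bLR_{\vp}(T) \preceq (r_1)$ is equivalent to $\rank T \leq r_1$, which holds iff every $(r_1+1)\times(r_1+1)$ minor of $T \in \cT^1$ vanishes. The $(r_1+1)$-minors are polynomials of degree $r_1+1$ on $\cT^1$, so we take these as $\cS_{\vp,(r_1)}$.

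For the inductive step, fix $d \geq 2$ and assume the result for $d-1$. By \cref{lem:lralgrecur}, $\bLR_\vp(T) \preceq \vr$ iff two conditions hold: (i) $\rank T[\vp]_d \leq r_d$; and (ii) $\bLR_{\vp'}(\Red^P_\vp(T,v)) \preceq \vr'$ for every $\rat{P} \in \cP_{r_d}$ and every $v \in \overline V$. Condition~(i) is cut out by the $(r_d+1)$-minors of $T[\vp]_d$, which, since $T \mapsto T[\vp]_d$ is linear in $T$, are polynomials in $T$ of degree $r_d+1$. Include all of these in $\cS_{\vp,\vr}$. For condition~(ii), apply the inductive hypothesis to obtain a set $\cS_{\vp',\vr'} \subseteq \Poly(\cT^{d-1})$ of degree at most $\prod_{i=1}^{d-1}(r_i+1)$ cutting out $\bLR_{\vp'}(\cdot) \preceq \vr'$. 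Then, for each $f \in \cS_{\vp',\vr'}$ and each $\rat{P} \in \cP_{r_d}$ (a finite collection, indexed by pairs $I,J \in \binom{[n]}{r_d}$ by \cref{def:Proj}), we form the expression $f(\Red^P_\vp(T,v))$ and include in $\cS_{\vp,\vr}$ every coefficient, viewed as a polynomial in $T$, when this expression is expanded as a polynomial in $v$. The universal quantifier over $v \in \overline V$ is handled exactly because a polynomial in $v$ vanishes identically on $\overline V$ iff all its coefficients vanish.

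It remains to verify the degree bound. By \cref{cor:algproj}, the numerator $P$ of $\rat{P} \in \cP_{r_d}$ is a homogeneous polynomial map of degree $r_d$ on $\cT^1$. Since $T \mapsto T[\vp]_d$ is linear in $T$, the map $T \mapsto P(T[\vp]_d)$ has degree $r_d$ in $T$. Therefore $\Red^P_\vp(T,v) = T[P(T[\vp]_d)v]$, being linear in $T$ in its outer occurrence and $v$, and of degree $r_d$ in $T$ through its argument, has total degree $r_d+1$ in $T$ (and is linear in $v$). Composing with $f$, which has degree at most $\prod_{i=1}^{d-1}(r_i+1)$, we get that $f(\Red^P_\vp(T,v))$ has degree at most $(r_d+1)\prod_{i=1}^{d-1}(r_i+1) = \prod_{i=1}^d(r_i+1)$ in $T$. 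Extracting coefficients in $v$ does not increase the degree in $T$, so all the polynomials we added satisfy the claimed bound.

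The main obstacle, or at least the place that takes care, is the bookkeeping in the inductive step: making sure that the universally quantified condition~(ii) over the infinite set $\overline V$ is cleanly turned into finitely many polynomial conditions via coefficient extraction, and verifying that the two sources of polynomial growth (the degree $r_d+1$ blow-up from applying $\Red^P_\vp$, and the inductive degree bound on $f$) multiply to give exactly $\prod_{i=1}^d (r_i+1)$. Since $\cP_{r_d}$ is finite and the degree accounting is multiplicative, the induction goes through cleanly. \qed
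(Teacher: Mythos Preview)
Your proposal is correct and follows essentially the same approach as the paper's own proof: induction on $d$ with the $(r_1+1)$-minors as the base case, and in the inductive step splitting via \cref{lem:lralgrecur} into the $(r_d+1)$-minors of $T[\vp]_d$ together with the $v$-coefficients of $f(\Red^P_\vp(T,v))$ for $f \in \cS_{\vp',\vr'}$ and $\rat{P} \in \cP_{r_d}$, with the same degree accounting via \cref{cor:algproj}.
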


\begin{proof}
We use induction on $d$. If $d = 1$, we can just let $\cS_{\vec p, \vec r}$ be the $(r_1 + 1) \times (r_1 + 1)$-minors of $T$.

Now let $d > 1$ and assume that the statement is true for $d - 1$. 
We take $\cS_{\vp, \vr} \sub \Poly(\cT^d)$ to be the set consisting of the following polynomials:
\begin{itemize}
    \item The $(r_d + 1) \times (r_d + 1)$-minors of $T[\vec p]_d$. 
    \item For every $f \in \cS_{\vec p', \vec r'}$ and $\rat{P} \in \cal{P}_{r_{d}}$, 
    the coefficients of $f(\Red^P_\vp(T, v))$ when viewed as a polynomial in $v$.
\end{itemize}
Let $V_1$ and $V_2$ be the zero sets cut out by the two families of polynomials above, respectively.
Observe that $\deg_T \Red^P_\vp(T, v)$, the degree of $\Red^P_\vp(T, v)$ in $T$, is $r_d+1$;
indeed, this follows since $T[\vec p]_d$ is linear in $T$, and since $P$ is homogeneous of degree $r_d$ by \cref{cor:algproj}.

First we verify that the degree condition is satisfied. 
Since $T[\vec p]_d$ is linear in $T$, the polynomials cutting out $V_1$ all have degree $r_d + 1$. 
Furthermore, the polynomials cutting out $V_2$ all have degree at most $\prod_{i=1}^d (r_i+1)$
since
\[\deg_T f(\Red^P_\vp(T, v)) \le \deg f \cdot \deg_T \Red^P_\vp(T, v) \le (r_1 + 1) \cdots (r_{d-1} + 1) \cdot (r_d+1),\]
where the last inequality uses the induction hypothesis.

It remains to show that the polynomials in $\cS_{\vec p, \vec r}$ do determine local rank, that is, for every $T \in \overline\cT^d$, $\bLR_{\vec p}(T) \preceq \vec r$ if and only if $T \in V_1 \cap V_2$. 
Note that $T \in V_1$ if and only if $\rank T[\vp]_d \leq r_d$.
Moreover, $T \in V_2$ if and only if for every $f \in \cS_{\vec p', \vec r'}$ and $\rat{P} \in \cal{P}_{r_{d}}$, $f(\Red^P_\vp(T, -))$ is the zero polynomial,
which, by the infinitude of $\overline \F$, holds if and only if $f(\Red^P_\vp(T, v)) = 0$ for every $v \in \overline V$.
Thus, by the induction hypothesis, $T \in V_2$ if and only if $\bLR_{\vp'}(\Red^P_\vp(T, v)) \preceq \vr'$ for every $v \in \overline V$ and $\rat{P} \in \cal{P}_{r_{d}}$.
Therefore, by \cref{lem:lralgrecur}, $T \in V_1 \cap V_2$ if and only if $\bLR_{\vec p}(T) \preceq \vec r$, which completes the proof.
\end{proof}

\begin{remark}\label{remark:lex-Zariski}
The lexicographic nature of the order $\prec$ is necessary for a result like \cref{prop:ddeg} to hold.
For example, we claim that the Zariski closure of $X\coloneqq\{ T \in \overline\cT^2 \mid \bLR_\vp(T) = (0,1) \}$, for some $\vp \in V^2$, contains a $T'$ with $\bLR_\vp(T')=(n,0)$, for all $n$.
This strongly suggests that it is impossible for the set of tensors of local rank ``at most'' $\vr$ to be defined by polynomial equations if any non-lexicographic order is used; on the other hand, we know from \cref{prop:ddeg} that the set $\{ T \in \overline\cT^2 \mid \bLR_\vp(T) \preceq (0,1) \}$ is Zariski closed, and $T'$ is indeed a member since $\bLR_\vp(T')=(n,0) \preceq (0,1)$.\footnote{Lest the reader raise the objection that the definition of local rank itself uses the order $\prec$, we note that for the case of elements of $\cT^2$, the lexicographic nature of $\prec$ does not show up at all in the definition of the local rank (see \cref{ex:lr2}). Thus, in some sense the use of the order $\prec$ is forced.}
To show the claim above, consider the tensor $T \in \cT^3(V,\F)$ with $V=\F^{n+1}$ from \cref{ex:lr3}, and note that, by the same argument there, we have for $\vp = (e_1,-)$ 
and $c \in \overline{V}$ that $\bLR_\vp(T[c])$
is $(0,1)$ when $c_1 \neq 0$, and is $(n,0)$ when $c_1 = 0$ and $c_2 \neq 0$.
Thus, for the linear subspace $Y\coloneqq T\big[\overline{V}\big] \sub \overline\cT^2$, the Zariski closure of $X \cap Y$ in $Y$, and thus the Zariski closure of $X$, contains a tensor $T'$ with $\bLR_\vp(T') = (n,0)$.
\end{remark}

\cref{prop:ddeg} can be used to deduce a rigidity result for the local rank of the slices of a tensor,
which we will need in \cref{sec:LR-AR}.
We recall (a special case of) the Schwartz-Zippel lemma. \begin{lemma}[\cite{Ore22}\footnote{Technically, Ore's original paper only states this result for prime fields, i.e., $\F = \F_p$ for some prime $p$. Nonetheless, the proof given there works across all finite fields.}] \label{lemma:SZ}
Let $\F$ be a finite field and $W$ be a finite-dimensional vector space over $\F$. 
For every $f \in \Poly(W)$, if $\abs{\set{ x \in W \mid f(x) = 0}}/\abs{W} > \deg(f)/\abs{\F}$ then $f=0$.
\end{lemma}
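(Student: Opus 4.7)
The plan is to proceed by induction on $n \coloneqq \dim W$, after identifying $\Poly(W)$ with the polynomial ring $\F[x_1,\ldots,x_n]$ via a choice of basis. It is convenient to prove the contrapositive: if $f \neq 0$ has degree $d$, then $|\{x \in W : f(x) = 0\}| \leq d \cdot |\F|^{n-1}$.

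For the base case $n=1$, I would invoke the classical fact that a nonzero univariate polynomial of degree $d$ over a field has at most $d$ roots. For the inductive step, I would expand $f$ as a polynomial in the last variable, $f = \sum_{i=0}^{k} f_i(x_1,\ldots,x_{n-1}) \, x_n^i$, choosing $k$ maximal so that the leading coefficient $f_k \in \F[x_1,\ldots,x_{n-1}]$ is nonzero (this exists because $f \neq 0$). Observe that $\deg f_k \leq d - k$, so by the induction hypothesis the set $Z \coloneqq \{y \in \F^{n-1} : f_k(y) = 0\}$ satisfies $|Z| \leq (d-k)|\F|^{n-2}$.

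To bound the zero set of $f$ in $W$, I would split according to whether the projection to the first $n-1$ coordinates lies in $Z$. For $y \notin Z$, the univariate polynomial $f(y, x_n)$ has degree exactly $k$ in $x_n$ and is therefore nonzero, contributing at most $k$ zeros; for $y \in Z$, I would trivially bound the number of admissible $x_n$ by $|\F|$. Summing yields
\[|\{x \in W : f(x) = 0\}| \leq k \cdot (|\F|^{n-1} - |Z|) + |\F| \cdot |Z| \leq k \cdot |\F|^{n-1} + (d-k)|\F|^{n-1} = d \cdot |\F|^{n-1},\]
which is the desired bound.

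The argument is entirely classical, so I do not anticipate a genuine obstacle. The only point requiring care is the choice of the ``splitting variable'' $x_n$ and the fact that $k$ is precisely the $x_n$-degree of $f$, which is what lets the inductive hypothesis on $f_k$ control a polynomial of strictly smaller number of variables without losing the full bound $d \cdot |\F|^{n-1}$.
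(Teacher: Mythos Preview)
Your proof is correct and is the standard induction argument for the Schwartz--Zippel lemma. Note, however, that the paper does not supply its own proof of this lemma: it simply cites it as a known result due to Ore, so there is no ``paper's proof'' to compare against. Your argument is exactly the classical one, and nothing more is needed.
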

\begin{coro}\label{coro:LR-SZ}
Let $\F$ be a finite field, $d \geq 1$, $T \in \cT^{d+1}$, $\vp \in V^d$, $\vr \in \NN^d$, and $W \sub V$ an $\F$-linear subspace. If $\Pr\brac*{\bLR_\vp(T[x]) \preceq \vr} \ge \d$ for a uniformly random $x \in W$,
then $\bLR_\vp(T[x]) \preceq \vr$ for all $x \in \overline{W}$,
provided $\d\abs{\F} > \prod_{i=1}^d (r_i+1)$.
\end{coro}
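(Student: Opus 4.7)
The plan is to combine \cref{prop:ddeg}, which cuts out the locus $\{S \in \overline{\cT^d} \mid \bLR_\vp(S) \preceq \vr\}$ by polynomials of degree at most $D \coloneqq \prod_{i=1}^d (r_i+1)$, with the Schwartz-Zippel lemma (\cref{lemma:SZ}) applied over the subspace $W$. The key observation is that the map $x \mapsto T[x]$ is a linear map $V \to \cT^d$, so for any polynomial $f \in \Poly(\cT^d)$ of degree at most $D$, the composition $g \coloneqq f \circ T[-]$ is a polynomial of degree at most $D$ on $V$ (and in particular on $W$).

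First, I would apply \cref{prop:ddeg} to obtain a set $\cS_{\vp,\vr} \sub \Poly(\cT^d)$ of polynomials, each of degree at most $D$, such that $\bLR_\vp(S) \preceq \vr$ iff $f(S) = 0$ for all $f \in \cS_{\vp,\vr}$. Fix any $f \in \cS_{\vp,\vr}$ and set $g_f(x) \coloneqq f(T[x])$, a polynomial on $V$ of degree at most $D$, which restricts to a polynomial on $W$ of degree at most $D$. By hypothesis, for a uniformly random $x \in W$, the event $\bLR_\vp(T[x]) \preceq \vr$ occurs with probability at least $\d$, and whenever it occurs we have $g_f(x) = 0$. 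Hence
\[\frac{\abs{\set{x \in W \mid g_f(x) = 0}}}{\abs{W}} \ge \d > \frac{D}{\abs{\F}} \ge \frac{\deg g_f}{\abs{\F}},\]
using the assumption $\d\abs{\F} > D$.

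Then Schwartz-Zippel (\cref{lemma:SZ}) forces $g_f$ to be the zero polynomial on $W$, and consequently $g_f(x) = 0$ for every $x \in \overline{W}$. Since this holds for every $f \in \cS_{\vp,\vr}$, a second application of \cref{prop:ddeg} (now over $\overline{\F}$) yields $\bLR_\vp(T[x]) \preceq \vr$ for all $x \in \overline{W}$, as desired.

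The argument is essentially a bookkeeping exercise once \cref{prop:ddeg} is available; the only thing to verify carefully is that the degree of $g_f$ in $x$ does not exceed $\deg f$, which follows from the linearity of $T[-]$ in its final argument. There is no real obstacle.
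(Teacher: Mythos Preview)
Your proposal is correct and follows essentially the same route as the paper's proof: compose each $f \in \cS_{\vp,\vr}$ from \cref{prop:ddeg} with the linear map $x \mapsto T[x]$, apply Schwartz--Zippel on $W$ to conclude the resulting degree-$\le D$ polynomial is identically zero, and then read off the conclusion over $\overline{W}$ via \cref{prop:ddeg}. There is nothing to add.
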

\begin{proof}
We claim that every $f \in \cS_{\vp,\vr}$ from \cref{prop:ddeg} satisfies $f(T[x]) = 0$ for all $x \in \overline W$.
Indeed, let $g \in \Poly(W)$ be given by $g(x) = f(T[x])$, and note that $\Pr[g(x)=0] \geq \delta$ for a uniformly random $x \in W$;
since $\deg(g) = \deg(f) \le \prod_{i=1}^d(r_i+1) < \delta\abs{\F}$, we have by \cref{lemma:SZ} that $g = 0$, that is, $f(T[x]) = 0$ for all $x \in \overline W$.
We deduce that $\bLR_\vp(T[x]) \preceq \vr$ for all $x \in \overline W$. 
\end{proof}

\section{Local Rank is Bounded by Analytic Rank}\label{sec:LR-AR}
In this section, $\F$ is always finite. Given a vector $\vr \in \NN^d$, we denote 
\[\norm{\vr} = \sum_{i=1}^d 2^{d-i} r_i.\]
Our goal is to prove \cref{main:LR-AR}.
Our proof plan follows these steps:
\begin{itemize}
    \item In \cref{subsec:uniform-resample} we define a certain sampling process on subsets of product spaces with useful uniformity conditions.
    \item In \cref{subsec:exp-rk-bound} we prove that the ranks of each $T[\vp]_i$ can be bounded in expectation by $\AR(T)$ if $\vp$ is chosen uniformly at random from $\Z(T)$.
    \item In \cref{subsec:prob-lex} we define a probabilistic version of the order $\preceq$ for random tuples, denoted $\preceq_\delta$, and show how it follows from some simple conditions.
    \item In \cref{subsec:problr} we relate $\preceq_\delta$ to local rank, using \cref{coro:LR-SZ}. 
    \item In \cref{subsec:rank-vec} we combine all of the above to prove \cref{main:LR-AR}: we use resampling to define ``probabilistic'' local rank,
    bound its norm in expectation, and convert this bound to a bound on local rank.
\end{itemize}
Much of the complexity of this section stems from the subtlety of translating between bounds on norms of vector and bounds on the vectors themselves. 
The main results are:
\begin{itemize}
    \item \cref{prop:exp-rk-bound}, used to bound the norm of probabilistic local rank by analytic rank, 
    \item \cref{lemma:lexMarkov}, used to convert the preceding bound to a bound on the probabilistic local rank vector itself (ordered by $\preceq_\delta$)
    \item \cref{claim:R-LR-bound}, used to convert the preceding bound to a (lexicographic) bound on local rank.
\end{itemize}

The reader who wishes to skim this section is advised to skip directly to \cref{subsec:rank-vec} to get a sense of the overall argument.

\subsection{Uniform resampling}\label{subsec:uniform-resample}
We first record the following easy but useful observation about conditional uniform distributions.

\begin{fact}\label{fact:uniform-condition}
    If $\vec{U}$ is a uniformly random element of a finite set $Z \sub \prod_{i=1}^d \Omega_i$
    then, for every $i\in[d]$ and $\vx \in \prod_i \Omega_i$, $\vec{U}$ conditioned on $U_j=x_j$ for all $j \neq i$
    is a uniformly random element of 
    \[\ext_Z^i(\vx) \coloneqq \setmid{\omega \in \Omega_i }{(x_1,\ldots,x_{i-1},\omega,x_{i+1},\ldots,x_n) \in Z}.\]
\end{fact}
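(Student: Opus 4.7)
The plan is to unpack the definitions of conditional probability and uniform distribution, and observe that the relevant probabilities simplify to a constant depending only on $\vx$ (not on the particular value of $U_i$). This is a routine but clean computation, and there is no genuine obstacle.

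First I would fix $i \in [d]$ and $\vx \in \prod_j \Omega_j$, and condition on the event $E = \{U_j = x_j \text{ for all } j \neq i\}$. The event $E$ has nonzero probability exactly when $\ext_Z^i(\vx)$ is nonempty, which is the only case in which the conditioning is meaningful; otherwise the claim is vacuous. In this case, I would compute directly: for any $\omega \in \Omega_i$, letting $\vy(\omega) = (x_1,\ldots,x_{i-1},\omega,x_{i+1},\ldots,x_d)$, we have
\[
\Pr[U_i = \omega \mid E] = \frac{\Pr[\vec U = \vy(\omega)]}{\Pr[E]} = \frac{\mathbf{1}[\vy(\omega) \in Z]/\abs{Z}}{\abs{\ext_Z^i(\vx)}/\abs{Z}} = \frac{\mathbf{1}[\omega \in \ext_Z^i(\vx)]}{\abs{\ext_Z^i(\vx)}},
\]
using the fact that $\Pr[E]$ counts exactly the number of elements of $Z$ of the form $\vy(\omega)$ with $\omega \in \Omega_i$, which is by definition $\abs{\ext_Z^i(\vx)}$. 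This shows that the conditional distribution of $U_i$ given $E$ is supported on $\ext_Z^i(\vx)$ and assigns equal mass $1/\abs{\ext_Z^i(\vx)}$ to each element, as claimed.

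Since the full vector $\vec U$ conditioned on $E$ has its other coordinates pinned to the values $x_j$ ($j \neq i$), the conditional distribution of $\vec U$ is determined by the conditional distribution of $U_i$, giving the stated result. No step here is more than bookkeeping, so there is no main obstacle.
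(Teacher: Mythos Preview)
Your proof is correct. The paper does not actually give a proof of this fact at all---it is stated as an ``easy but useful observation'' and left without argument---so your routine conditional-probability computation is exactly the kind of justification the authors had in mind.
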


\cref{fact:uniform-condition} motivates the following definition.
We recall the notation $\vx \ominus_i \vy = (x_1,\ldots,x_i,y_{i+1},\ldots,y_d)$.

\begin{definition}\label{def:uniform-resample}
    A \emph{uniform resample} from a finite subset $Z \subseteq \Omega \coloneqq \prod_{i=1}^d \Omega_i$ is a random pair $(\vU,\vX) \in Z \times Z$ obtained as follows:
    \begin{itemize}
        \item $\vU$ is chosen uniformly at random from $Z$, and
        \item $X_i$, for each $i=d,\ldots,1$, is chosen uniformly at random from
        $\ext_Z^i(\vU \ominus_{i} \vX)$.\footnote{That is, uniformly at random from 
        $\setmid{x \in \Omega_i}{(U_1,\ldots,U_{i-1},x,X_{i+1},\ldots,X_d) \in Z}$.}
    \end{itemize}    
\end{definition}

The easy induction argument below shows that a uniform resampling from $Z$ yields 
a sequence of uniformly random elements of $Z$.
We recall the notation $\vx \odot_i v = (x_1,\ldots,x_{i-1},v,x_{i+1},\ldots,x_d)$.

\begin{claim}\label{claim:uniform-resample}
    If $(\vU,\vX)$ is a uniform resample from $Z$ then
    for each $0 \le i \le d$, $\vU \ominus_i \vX$ 
    is a uniformly random element of $Z$.
\end{claim}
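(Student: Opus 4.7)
The natural approach is downward induction on $i$, starting from $i=d$ (where the claim is immediate from the definition of a uniform resample) and descending to $i=0$ (which gives uniformity of $\vX$). The inductive step will ride entirely on \cref{fact:uniform-condition}: the intuition is that replacing the $i$th coordinate of a uniformly random element of $Z$ by a fresh uniform sample from $\ext_Z^i$ (which depends only on the other coordinates) again produces a uniformly random element of $Z$.

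In more detail, the base case $i=d$ holds because $\vU \ominus_d \vX = \vU$, which is uniform on $Z$ by the first bullet of \cref{def:uniform-resample}. For the inductive step, assume $\vU \ominus_i \vX$ is uniform on $Z$ and observe that $\vU \ominus_{i-1} \vX$ is obtained from $\vU \ominus_i \vX$ by substituting $X_i$ for $U_i$ in coordinate $i$, while leaving the other coordinates untouched. By the second bullet of \cref{def:uniform-resample}, $X_i$ is drawn uniformly from $\ext_Z^i(\vU \ominus_i \vX)$, and this set depends only on the coordinates of $\vU \ominus_i \vX$ other than the $i$th (by the very definition of $\ext_Z^i$). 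Thus the substitution is exactly the operation of replacing the $i$th coordinate with an independent uniform sample from $\ext_Z^i$ of the other coordinates.

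To conclude uniformity of $\vU \ominus_{i-1} \vX$, I would compute directly: for any fixed $z \in Z$, summing over all $w \in Z$ agreeing with $z$ outside coordinate $i$ (of which there are exactly $|\ext_Z^i(z)|$, and for each of which $\ext_Z^i(w) = \ext_Z^i(z)$), the inductive hypothesis gives $\Pr[\vU \ominus_i \vX = w] = 1/|Z|$, and the resampling step gives $\Pr[X_i = z_i \mid \vU \ominus_i \vX = w] = 1/|\ext_Z^i(z)|$, so the total probability of landing at $z$ is $1/|Z|$. Equivalently, one could simply invoke \cref{fact:uniform-condition}: it says that for a uniform $\vU \ominus_i \vX$ the $i$th coordinate conditional on the rest is uniform on $\ext_Z^i$, so overwriting it with another uniform sample from that same set preserves the joint uniform distribution on $Z$.

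There is no real obstacle here; the argument is purely bookkeeping, and the only care needed is to keep straight which coordinates of $\vU \ominus_i \vX$ are $U$-coordinates versus $X$-coordinates and to note that $X_i$ is drawn with full access to (hence is conditioned on) $\vU$ and $X_{i+1},\ldots,X_d$, i.e.\ precisely the non-$i$ coordinates of $\vU \ominus_i \vX$. Once this is set up, the inductive step reduces to a one-line application of \cref{fact:uniform-condition}.
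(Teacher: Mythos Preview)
Your proposal is correct and follows essentially the same approach as the paper: downward induction on $i$, with the inductive step using that resampling the $i$th coordinate from $\ext_Z^i$ preserves uniformity, exactly via \cref{fact:uniform-condition}. The paper phrases the inductive step as ``conditioned on the non-$i$ coordinates, $X_i$ and $U_i$ have the same distribution,'' whereas you also spell out the direct summation $|\ext_Z^i(z)| \cdot \tfrac{1}{|Z|} \cdot \tfrac{1}{|\ext_Z^i(z)|} = \tfrac{1}{|Z|}$; these are the same argument.
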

\begin{proof}
    Put $\vZ^i = \vU \ominus_i \vX$.
    We use downward induction on $i$. The base case of $i=d$ follows from definition as $\vZ^d = \vU$.
    
    Fix $c \in \Omega$.\footnote{The use of $c$ is for notational convenience only.}
    Let $\vC^i = (U_1,\ldots,U_{i-1},c,X_{i+1},\ldots,X_n)$,
    so that $\vZ^{i}=\vC^i \odot_i U_i$ and $\vZ^{i-1}=\vC^i \odot_i X_i$.
    For the induction step, assume that $\vZ^{i}$ is a uniformly random element of $Z$; we aim to prove the same for $\vZ^{i-1}$.
    Conditioned on $\vC^i$, 
    $X_i$ has the same distribution as $U_i$, by \cref{fact:uniform-condition}.
    For every $\vz\in Z$, write $\vz^i = (z_1,\ldots,z_{i-1},c,z_{i+1},\ldots,z_n)$, and note that
    \begin{align*}
      \Pr[\vZ^{i-1} = \vz] &= \Pr[\vC^i=\vz^i \,\land\, X_i = z_i]
      = \Pr[\vC^i=\vz^i]\cdot\Pr[X_i = z_i \mid \vC^i=\vz^i]\\
      &= \Pr[\vC^i=\vz^i]\cdot\Pr[U_i = z_i \mid \vC^i=\vz^i]
      =  \Pr[\vC^i=\vz^i \,\land\, U_i = z_i]
     = \Pr[\vZ^i = \vz] .
    \end{align*}
    This completes the proof.
\end{proof}

Recall the notation $T[\vx]_i=T(x_1,\ldots,x_{i-1},-,x_{i+1},\ldots,x_d) \in \cT^1$.
We note that for the zero set of a tensor $Z=\Z(T)$, we have $\ext_{\Z(T)}^i(\vx)=\ker T[\vx]_i$.
Put differently, for $T \in \cT^d$, $\vx \in V^d$, and $i \in [d]$, we have
\begin{equation}\label{eq:ext-ZT}
    y \in \ker T[\vx]_i \iff \vx \odot_i y \in \Z(T) .
\end{equation}

\subsection{Analytic rank bounds expected ranks}\label{subsec:exp-rk-bound}
As we next show, the analytic rank bounds the expected rank of the matrices $T[\vz]_i$ when $\vz$ is chosen uniformly at random from $\Z(T)$.

\begin{prop}\label{prop:exp-rk-bound}
Let $\F$ be a finite field and $d \geq 1$.
For every $T \in \cT^d$ and $i \in [d]$,
\[\E_{\vz}[\rank T[\vz]_i] \leq \AR(T)\]
where $\vz$ is chosen uniformly at random from $\Z(T)$.
\end{prop}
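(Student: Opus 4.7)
The plan is to reduce the expected rank $\E_{\vz}[\rank T[\vz]_i]$ to an expectation of a logarithm, to which we can apply Jensen's inequality. The key observation is that $T[\vz]_i$ does not depend on $z_i$, so if we let $\vz_{-i}$ denote the tuple $\vz$ with its $i$-th coordinate dropped, the rank $r(\vz_{-i}) \coloneqq \rank T[\vz]_i$ and the kernel size $|\ker T[\vz]_i| = q^{n - r(\vz_{-i})}$ are both functions of $\vz_{-i}$ alone (where $q = |\F|$ and $n = \dim V$).

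First I would compute the marginal distribution of $\vz_{-i}$ induced by the uniform distribution on $\Z(T)$. By the characterization $z_i \in \ker T[\vz]_i \iff \vz \odot_i z_i \in \Z(T)$ (cf.\ \labelcref{eq:ext-ZT}), for each fixed $\vz_{-i}$ the number of completions $z_i$ with $\vz \in \Z(T)$ is exactly $q^{n - r(\vz_{-i})}$. Summing over $\vz_{-i}$ gives $|\Z(T)| = \sum_{\vz_{-i}} q^{n - r(\vz_{-i})}$, and the marginal is the weighted distribution
\[p(\vz_{-i}) = \frac{q^{n - r(\vz_{-i})}}{|\Z(T)|}.\]

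The crucial calculation is now a single line. Setting $X \coloneqq q^{r(\vz_{-i})}$, the weight $q^{n - r}$ in $p$ cancels the value $q^r$ of $X$, yielding
\[\E_p[X] \;=\; \sum_{\vz_{-i} \in V^{d-1}} \frac{q^{n - r(\vz_{-i})}}{|\Z(T)|}\cdot q^{r(\vz_{-i})} \;=\; \frac{q^n \cdot |V|^{d-1}}{|\Z(T)|} \;=\; \frac{q^{dn}}{|\Z(T)|}.\]
Applying Jensen's inequality to the concave function $\log_q$, and recalling that $\AR(T) = dn - \log_q |\Z(T)|$, gives
\[\E_{\vz \in \Z(T)}[\rank T[\vz]_i] \;=\; \E_p[\log_q X] \;\le\; \log_q \E_p[X] \;=\; dn - \log_q |\Z(T)| \;=\; \AR(T),\]
which is the desired bound.

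There is essentially no obstacle here beyond getting the direction of Jensen right; the only real insight is that the right random variable to place inside the logarithm is $q^{r(\vz_{-i})}$ (rather than, say, $q^{-r(\vz_{-i})} = |\ker T[\vz]_i|/q^n$), because the biased marginal $p$ exactly cancels its weight and produces the constant $q^{dn}/|\Z(T)|$. This is the standard ``rank-via-Jensen'' trick, and it is precisely because $k(\vz_{-i})$ appears both in the reweighting and inside the expectation that the $(d-1)n$ worth of ``slack'' between $\log_q X \le dn$ and the desired bound $\AR(T)$ gets absorbed correctly.
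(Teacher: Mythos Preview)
Your proof is correct and follows essentially the same approach as the paper: decompose $\Z(T)$ via the $i$-th coordinate fibers and apply Jensen's inequality. The only cosmetic difference is that you apply Jensen to the concave function $\log_q$ under the size-biased measure $p$, whereas the paper applies Jensen to $t\mapsto -t\log_q t$ under the uniform measure on $V^{d-1}$; these two formulations are equivalent via the change of measure $d p \propto q^{-r}\, d(\text{uniform})$ and yield the identical inequality.
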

\begin{proof}
Let 
$\cV_i = \setmid{\vx\in V^d}{x_i = 0}$ 
and $q = \abs{\F}$.
By~\labelcref{eq:ext-ZT}, $\Z(T)$ decomposes as
\[\Z(T) = \{\vx \odot_i y \mid \vx \in \cV_i,\, y \in \ker T[\vx]_i\}.\]
We deduce the equality
\[\abs{\Z(T)} = \sum_{\vx \in \cV_i} \, \sum_{y \in \ker T[\vx]_i} 1 = \sum_{\vx \in \cV_i} \abs{\ker T[\vx]_i},\]
as well as the equality
\[\sum_{\vz \in \Z(T)} \rank T[\vz]_i
= \sum_{\vx \in \cV_i} \, \sum_{y \in \ker T[\vx]_i} \rank T[\vx]_i =
\sum_{\vx \in \cV_i} \abs{\ker T[\vx]_i} \rank T[\vx]_i.\]

Now, let $\vX$ be chosen uniformly at random from $\cV_i$, and write $X = \abs{\ker T[\vX]_i}/\abs{V}$. Then $\abs{\Z(T)}/\abs{V}^d = \E[X]$, and $\sum_{\vz \in \Z(T)} \rank T[\vz]_i/\abs{V}^d = \E[-X\log_q X]$ since  $X=q^{-\rank T[\vX]_i}$.
We deduce that
\begin{multline*}
    \E_{\vz}[\rank T[\vz]_i] = \frac{\sum_{\vz \in \Z(T)} \rank T[\vz]_i}{\abs{\Z(T)}}
    = \frac{\E[-X\log_q X]}{\E [X]} \\
    \leq \frac{-\E[X]\log_q\E[X]}{\E [X]}
    = -\log_q\E[X] 
    = -\log_q \frac{\abs{\Z(T)}}{\abs{V}^d} 
    = \AR(T),
\end{multline*}
where the inequality follows from Jensen's inequality applied to the function $t \mapsto -t\log_q t$, which is concave for $t \geq 0$
(as $(t\log t)'' = 1/t \ge 0$). This completes the proof.
\end{proof}

\begin{remark}
    Interestingly, if in \cref{prop:exp-rk-bound} we change the random choice to be uniformly at random from all of $V^d$, rather than from $\Z(T)$, then one can prove that the reverse inequality holds.
    (Indeed, in this case we have convexity rather than concavity; 
    in the notation of the proof, 
    $\E_{\vx \in V^d} [\rk T[\vx]_i] = \E_{\vx \in \cV_i} [\rank T[\vx]_i] = \E [-\log_q X] \ge -\log_q \E[X] = \AR(T)$.)
    In fact, the reverse inequality can hold with an arbitrarily large gap.
    For example, consider the $2$-linear map $T(x,y)=(x_1y_1,\ldots,x_1y_n)\colon V^2 \to V$, where $V=\F_q^n$.
    The linear map $T[x]_2 \colon V \to V$, given by $T[x]_2 \colon y \mapsto x_1y$, is invertible if $x_1\neq 0$, and is the zero map if $x_1=0$.
    Thus, $\E_{\vx \in V^2}[\rank T[\vx]_2] = \E_{x \in V}[\rank T[x]_2]
     = (1-1/q)n \approx n$, whereas $\AR(T) \approx 1$, as $\abs{\Z(T)}/\abs{V}^2 = (q^{n-1}\cdot q^{n} + (q^n-q^{n-1}))/q^{2n}$.
    Furthermore, one cannot replace the expectation with maximization in \cref{prop:exp-rk-bound} (with an arbitrarily large gap); 
    indeed, for $T$ as above,
    $\max_{\vz \in \Z(T)} \rank T[\vz]_2 \ge \rank T[(e_1,0)]_2 = \rank T[e_1]_2 = n$.
\end{remark}

\subsection{Pruning lemmas}\label{subsec:prob-lex}
In this section, if $\vx = (x_1,\ldots,x_d)$ and $0 \leq i \leq d$, then we let $\vx_{\leq i} = (x_1,\ldots,x_i)$ and $\vx_{\geq i} = (x_i,\ldots,x_d)$. 
(So, for example, $\vx'=(x_1,\ldots,x_{d-1}) = \vx_{\le d-1}$.)
Analogously, for a subset of a product set $T \sub \prod_{i=1}^d \Omega_i$ and $0 \leq i \leq d$, 
we denote by $T_{\le i}$ the image of $T$ under the projection onto $\prod_{j=1}^i \Omega_j$,
and by $T_{\ge i}$ the image of $T$ under the projection onto $\prod_{j=i}^d \Omega_j$.

We need the following probabilistic lemma which, roughly speaking, guarantees that \emph{every} prefix (or projection) has a large number of extensions (or preimages), 
and which may be of independent interest.

\begin{lemma}[pruning] \label{lem:pruning}
    Let $\vX$ be a random vector taking values in a product of finite sets $\Omega = \prod_{i=1}^d \Omega_i$,
    let $S \sub \Omega$, and let $\delta_1,\delta_2,\ldots,\delta_d>0$ with $\sum_{i=1}^d \delta_i \le \Pr[\vX\in S]$.
    There is a nonempty subset $T \sub S$ such that,
    for every $i \in [d]$ and $\vix \in T_{\le i-1}$, 
    $\Pr[\vX_{\leq i} \in T_{\le i} \mid \vX_{\leq i-1} = \vix] \ge \delta_i$.
\end{lemma}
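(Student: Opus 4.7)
I would proceed by induction on $d$, peeling off the first coordinate. The guiding intuition is that $\delta_1$ is ``spent'' to restrict $X_1$ to a set of sufficiently heavy first-coordinate values, after which the remaining budget $\delta_2 + \cdots + \delta_d$ suffices to handle each resulting conditional problem.

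For $d = 1$, take $T = S$; the single condition $\Pr[X_1 \in T] \ge \delta_1$ is exactly the hypothesis. For the inductive step, define the set of ``good'' first-coordinate values
\[ G = \{ x_1 \in \Omega_1 : \Pr[X_1 = x_1] > 0 \text{ and } \Pr[\vX \in S \mid X_1 = x_1] \ge \delta_2 + \cdots + \delta_d \}. \]
Splitting the law of total probability according to membership in $G$, and upper bounding by $1$ on $G$ and by $\delta_2 + \cdots + \delta_d$ on its complement, gives
\[ \Pr[\vX \in S] \le \Pr[X_1 \in G] + (1 - \Pr[X_1 \in G])(\delta_2 + \cdots + \delta_d), \]
which combined with $\sum_i \delta_i \le \Pr[\vX \in S]$ yields $\Pr[X_1 \in G] \ge \delta_1 > 0$. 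For each $x_1 \in G$, the conditional distribution of $(X_2,\ldots,X_d)$ given $X_1 = x_1$, the slice $S_{x_1} = \{\vy : (x_1,\vy) \in S\}$, and the thresholds $\delta_2,\ldots,\delta_d$ satisfy the hypotheses of the lemma in dimension $d-1$ (by the definition of $G$), so the inductive hypothesis furnishes a nonempty $T_{x_1} \sub S_{x_1}$ meeting the corresponding pruning conditions.

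I then set $T = \{(x_1, \vy) : x_1 \in G,\ \vy \in T_{x_1}\}$, which is nonempty since $G$ and each $T_{x_1}$ are. Verification of the pruning conditions proceeds level by level. At level $1$, we have $T_{\le 1} = G$, so the condition reduces to $\Pr[X_1 \in G] \ge \delta_1$, which we established. At each level $i \ge 2$, any $\vix \in T_{\le i-1}$ necessarily takes the form $(x_1, \vix_{\ge 2})$ with $x_1 \in G$ and $\vix_{\ge 2} \in (T_{x_1})_{\le i-2}$, and membership of $\vX_{\le i}$ in $T_{\le i}$ conditional on $\vX_{\le i-1} = \vix$ reduces exactly to membership of $(X_2,\ldots,X_i)$ in $(T_{x_1})_{\le i-1}$ under the conditional law at $X_1 = x_1$; the required lower bound $\delta_i$ is then precisely the level-$(i-1)$ pruning condition for $T_{x_1}$, which holds by induction.

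The main obstacle is essentially bookkeeping: matching prefixes of $T$ with prefixes of the various $T_{x_1}$'s under the index shift caused by conditioning on $X_1$, and correctly handling first-coordinate values with probability zero (which can be harmlessly excluded from $G$). An alternative strategy based on iteratively deleting bad prefixes also works, but tracking total probability lost seems more delicate than the inductive route above.
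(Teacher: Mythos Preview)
Your proof is correct. It differs from the paper's in the direction of the induction: the paper peels off the \emph{last} coordinate, defining $S' = \{\vix \in \Omega_1 \times \cdots \times \Omega_{d-1} : \Pr[\vX \in S \mid \vX' = \vix] \ge \delta_d\}$, showing $\Pr[\vX' \in S'] \ge \delta_1 + \cdots + \delta_{d-1}$, and invoking the inductive hypothesis a single time on $(\vX', S', \delta_1, \ldots, \delta_{d-1})$ to obtain $T'$, then setting $T = \{\vx \in S : \vx' \in T'\}$. You instead peel off the \emph{first} coordinate and invoke the inductive hypothesis once for each $x_1 \in G$. The paper's route is marginally slicker in that it makes one inductive call rather than $|G|$ many, and the verification that $T'_{\le i} = T_{\le i}$ for $i \le d-1$ is essentially automatic; your route has the mild advantage that the conditional distributions you hand to the induction are genuine probability distributions on $\Omega_2 \times \cdots \times \Omega_d$ (rather than a marginal), so the index bookkeeping is perhaps more transparent. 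Either way the idea is the same Markov-type averaging argument, and both are equally valid.
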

\begin{proof}
    We use induction on $d$. As a base case, we note that if $d = 1$, we can just let $T = S$.
    For the induction step, 
    write $\Omega' = \prod_{i=1}^{d-1} \Omega_i$, and let $S' \sub \Omega'$ be given by
    \[S' = \big\{ \vix \in \Omega' \,\big\vert\, \Pr[\vX' = \vix] > 0 \,\wedge\, \Pr[\vX \in S \mid \vX' = \vix] \ge \delta_d \big\}. \]
    We first claim that $\Pr[\vX' \in S'] \ge \delta_1+\cdots+\delta_{d-1}$. 
    Indeed, this follows from the inequality
    \begin{align*}
     \Pr[\vX \in S] &= 
        \Pr[\vX \in S \,\wedge\, \vX' \in S'] + \Pr[\vX \in S \mid \vX' \notin S']\Pr[\vX' \notin S']\\
        &\le \Pr[\vX' \in S'] + \Pr[\vX \in S \mid \vX' \notin S'] ,
    \end{align*}
    which implies that
    \[\Pr[\vX \in S] - \Pr[\vX' \in S'] \le \Pr[\vX\in S \mid \vX' \notin S'] \le \delta_d .\]
    (Here we have assumed that $\Pr[\vX' \notin S'] > 0$; otherwise the claim is obvious.)
    
    Thus, we may apply the induction hypothesis on $\vX'$, $S'$, and $\delta_1,\ldots,\delta_{d-1}$, 
    to obtain a nonempty subset $T' \sub S'$ satisfying $\Pr[\vX_{\leq i} \in T_{\le i} \mid \vX_{\leq i-1} = \vix] \ge \delta_i$ for all $i \in [d-1]$ and $\vix \in T'_{\le i-1}$. Let $T \sub S$ be the nonempty set $\setmid{\vx \in S}{\vx' \in T'}$; we claim that $T$ is the desired set.

    To show this, we first note that $T'_{\le i} = T_{\le i}$ for all $0 \leq i \leq d-1$;
    indeed, since $T' \subseteq S'$, for every $\vy \in T'$ there is some $\vx \in T$ with $\vx' = \vy$. 
    It follows that $\Pr[\vX_{\leq i} \in T_{\le i} \mid \vX_{\leq i-1} = \vix] \ge \delta_i$ for every $i \in [d-1]$ and $\vix \in T_{\le i-1}$.
    It now suffices to show the condition for $i=d$. We have, for every $\vx \in T_{\le d-1}=T'$,
    \[
    \Pr[\vX_{\leq d} \in T_{\le d} \mid \vX_{\leq d-1} = \vx]
    = \Pr[\vX \in T \mid \vX' = \vx] \geq \delta_d,
    \]
    where the inequality uses the fact that $T' \sub S'$.
    This completes the proof.
\end{proof}

We next define (co-)lexicographic order for a random vector in $\NN^d$.
We will use it later in \labelcref{eq:Rp-def} to define our probabilistic analogue of local rank (where $Y_i$ below corresponds to the rank of a matrix slice at the point $(p_1,\ldots,p_i,X_{i+1},\ldots,X_d)$).

\begin{definition}[$\preceq_\delta$] \label{def:app-lex}
Let $\vX=(X_1,\ldots,X_d)$ be a random vector taking values in a product of finite sets $\Omega = \prod_{i=1}^d \Omega_i$,
and let $\vY = (Y_1,\ldots,Y_d)$ be a random vector taking values in $\NN^d$, where $Y_i$ is a function of $X_{i+1},X_{i+2},\ldots,X_d$ (in particular, $Y_d$ is always constant).\\
For a tuple $\vr \in \NN^d$ and a real $\delta > 0$, 
we write $\vY \preceq_\delta \vr$ if the following recursive definition holds:
\begin{itemize}
    \item If $d = 1$, then $\vY \preceq_\delta \vr$ if $Y_1 \leq r_1$.
    \item If $d > 1$, then $\vY \preceq_\delta \vr$ if either (1) $Y_d < r_d$, or (2) $Y_d = r_d$ and $\Pr_{X_d} [(\vY' \mid X_d) \preceq_\delta \vr'] \geq \delta$.\footnote{Here, $(\vY' \mid X_d)$ is the variable $\vY'$ conditioned on a fixed value of $X_d$. This condition makes sense since if we condition on a value of $X_d$, then $\vX'$ and $\vY'$ are random vectors satisfying the conditions of \cref{def:app-lex}.}
\end{itemize}
\end{definition}
\begin{remark}
The condition $\vY \preceq_\delta \vr$ is dependent on the choice of $\vX$.
However, in practice the choice of $\vX$ is clear and thus mention of $\vX$ is omitted from the notation.
\end{remark}

We use \cref{lem:pruning} to deduce that for a random tuple $\vY$, one can find a tuple that bounds it---in the sense of $\preceq_\d$---in any ``large enough'' set.

\begin{coro} \label{lemma:lexMarkov}
Let $d, \vX, \vY$ be as in \cref{def:app-lex} such that $d > 1$, and let $\eps > 0$. Suppose there is a subset $A \subseteq \NN^d$ such that $\Pr[\vY \in A] \geq \eps$. Then there is some $\vr \in A$ such that $\Pr[\vY = \vr] > 0$ and $\vY \preceq_{\eps/(d-1)} \vr$.
\end{coro}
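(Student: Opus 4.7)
The plan is to apply the pruning lemma (\cref{lem:pruning}) to the underlying $X$-variables, taken in reverse order to match the recursion of $\preceq_\delta$, and then define $\vr$ via a lexicographically maximal choice of $Y$-values on the resulting pruned set.

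First, replacing $A$ by $A \cap \setmid{\vy \in \NN^d}{\Pr[\vY = \vy] > 0}$ preserves $\Pr[\vY \in A] \ge \eps$, so we may assume every $\vr \in A$ automatically satisfies $\Pr[\vY = \vr] > 0$. Since each $Y_i$ depends only on $X_{i+1},\ldots,X_d$ (so $X_1$ is irrelevant to $\vY$), set $\vZ \coloneqq (Z_1,\ldots,Z_{d-1}) = (X_d,X_{d-1},\ldots,X_2)$ and $S \coloneqq \setmid{\vec z}{\vY(\vec z) \in A}$, which has $\Pr[\vZ \in S] \ge \eps$. Apply \cref{lem:pruning} with $\delta_j = \eps/(d-1)$ for $j = 1,\ldots,d-1$ (so $\sum_j \delta_j = \eps \le \Pr[\vZ \in S]$) to obtain a nonempty $T \sub S$ such that $\Pr[\vZ_{\le j} \in T_{\le j} \mid \vZ_{\le j-1} = \vec z^{(j-1)}] \ge \eps/(d-1)$ for all $j \in [d-1]$ and $\vec z^{(j-1)} \in T_{\le j-1}$.

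Next, select $\vec z^* \in T$ to lexicographically maximize the tuple $(Y_{d-1}(z_1), Y_{d-2}(z_1,z_2), \ldots, Y_1(z_1,\ldots,z_{d-1}))$; such a $\vec z^*$ exists since $T$ is finite and nonempty. Define $r_i \coloneqq Y_i(z^*_{d-i},\ldots,z^*_1)$ for $i < d$ and $r_d \coloneqq Y_d$, so $\vr = \vY(\vec z^*) \in A$. Verify $\vY \preceq_{\eps/(d-1)} \vr$ by downward induction on $k \in \{0,1,\ldots,d-1\}$, proving: for every $\vec z^{(k)} \in T_{\le k}$ with $Y_{d-j}(z_j,\ldots,z_1) = r_{d-j}$ for all $j \in [k]$ (call this $(\star)$), the conditional random vector $(Y_1,\ldots,Y_{d-k}) \mid (X_d = z_1,\ldots, X_{d-k+1} = z_k)$ satisfies $\preceq_{\eps/(d-1)} (r_1,\ldots,r_{d-k})$. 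The base $k = d-1$ is immediate from $(\star)$. For the inductive step, $(\star)$ forces equality at the leading coordinate, so the $\preceq_\delta$ recursion reduces, via pruning applied to the fiber $T_{\le k+1}(\vec z^{(k)}) \coloneqq \setmid{z}{(\vec z^{(k)}, z) \in T_{\le k+1}}$, to establishing the level-$(k+1)$ condition for each $z_{k+1}$ in that fiber. The critical bound $Y_{d-k-1}(z_{k+1},\ldots,z_1) \le r_{d-k-1}$ follows since $(\vec z^{(k)}, z_{k+1})$ extends to some $\vec z \in T$ whose first $k$ $Y$-values match $\vec z^*$'s (by $(\star)$), so lex-maximality prevents its $(k+1)$th $Y$-value from exceeding $\vec z^*$'s; strict inequality closes the case automatically, while equality extends $(\star)$ to level $k+1$ and invokes the inductive hypothesis. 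Applying the claim at $k = 0$ (where $(\star)$ is vacuous) yields $\vY \preceq_{\eps/(d-1)} \vr$.

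The delicate point is that $r_{d-k-1}$ must upper-bound $Y_{d-k-1}$ \emph{universally} over every $(k+1)$-prefix in $T$ whose first $k$ $Y$-values are $(r_{d-1},\ldots,r_{d-k})$, not merely over those extending $\vec z^*$'s specific variable-prefix $(z^*_1,\ldots,z^*_k)$. This universality, needed to let the induction flow over all admissible fiber extensions produced by pruning, is exactly what lex-maximizing over the tuple of $Y$-values (as opposed to over variable tuples) provides.
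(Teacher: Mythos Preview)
Your proof is correct and follows essentially the same approach as the paper's: apply the pruning lemma to the reversed $X$-variables, take $\vr$ to be the (co)lexicographic maximum of $\vY$ over the pruned set, and run an induction that uses the pruning bound to verify the recursive definition of $\preceq_\delta$. The only cosmetic differences are that you reverse the indexing when defining $\vZ$ (whereas the paper reverses when invoking the pruning lemma), and that your inductive hypothesis $(\star)$ already builds in the equality $Y_{d-k}=r_{d-k}$ at the leading coordinate, making the base case trivial, whereas the paper derives this equality inside the inductive step; note that at $k=0$ the equality $Y_d=r_d$ comes from $Y_d$ being constant rather than from $(\star)$, but this does not affect the argument.
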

\begin{proof}
Without loss of generality, replace $A$ with $\setmid{\vr \in A}{\Pr[\vY = \vr] > 0}$ so the condition $\Pr[\vY = \vr] > 0$ becomes automatic. For ease of notation, we reindex by letting $\vZ \coloneqq (X_2,X_3,\ldots,X_d)$ and $\Sigma_i = \Omega_{i+1}$.  For every $i\in[d]$, $Y_i$ depends on $\vZ_{\ge i}$, and for every $i\in[d-1]$, $Z_i$ takes values in $\Sigma_i$.

Put $\d=\eps/(d-1)$.
There is a set $T \sub \prod_{i=1}^{d-1} \Sigma_i$ such that 
$\vY(\vz) \in A$ for all $\vz \in T$, and 
\begin{equation}\label{eq:ind-T}
    \Pr[\vZ_{\geq i} \in T_{\ge i} \mid \vZ_{\geq i+1} = \viz] \geq \d
\end{equation}
for every $i \in [d-1]$ and $\viz \in T_{\ge i+1}$.
Indeed, this follows by applying \cref{lem:pruning} with $d-1$ as $d$, 
$(Z_{d-1},\ldots,Z_1)$ as $\vX$, $\setmid{(z_{d-1},\ldots,z_1)}{\vY(z_1,\ldots,z_{d-1}) \in A} \subseteq \prod_{i=d-1}^1 \Sigma_i$ as $S$, and $\d$ as all the $\d_i$,
using the fact that $\Pr[(Z_{d-1},\ldots,Z_1) \in S] = \Pr[\vY \in A] \ge \eps = \sum_{i=1}^{d-1} \d$.

Let $\vr = \maxprec_{\vz \in T} \vY(\vz)$,
and note that, by construction, $\vr \in A$.
We claim that $\vY \preceq_\d \vr$. In order to prove this, we prove via induction on $1 \leq i \leq d$ the following hypothesis:
\[\forall \viz \in T_{\ge i} \colon \vY_{\ge i+1}(\viz)=\vr_{\ge i+1} \implies \vY_{\le i}(-,\viz) \preceq_\delta \vr_{\le i} \;.\]
In other words, if $\viz$ satisfies the conditions above then $\vY(-,\viz) \preceq_\delta \vr$.
Note that the $i = d$ case of the hypothesis simply says $\vY \preceq_\delta \vr$, implying the desired statement.

The base case $i=1$ reads: for every $\vz \in T$, if $\vY_{\ge 2}(\vz)=\vr_{\ge 2}$ then $Y_1(\vz) \le r_1$ (as $Y_1(-,\vz)$ is constant);
it holds since $(Y_1(\vz), \vr_{\ge 2}) = \vY(\vz) \preceq \vr$, by construction.

For the inductive step, let $2 \leq i \leq d$ and assume the inductive hypothesis for $i-1$. Take some $\viz \in T_{\ge i}$ satisfying $\vY_{\ge i+1}(\viz)=\vr_{\ge i+1}$.
Put $y_i = Y_i(\viz)$,
so that $\vY_{\ge i}(\viz) = (y_i,\vr_{\ge i+1})$.
Note that $y_i \le r_i$, 
since otherwise,
for any $(\viz',\viz) \in T$---which exists---we have $\vY_{\ge i}(\viz',\viz) = \vY_{\ge i}(\viz) \succ \vr_{\ge i}$, contradicting the fact that $\vY(\viz',\viz) \preceq \vr$ by construction.

Now, if $y_i < r_i$ then, by \cref{def:app-lex}, $\vY_{\le i}(-,\viz) \preceq_\delta \vr_{\le i}$, as needed.
Therefore, suppose that $y_i=r_i$, so that $\vY_{\ge i}(\viz)=\vr_{\ge i}$.
By the induction hypothesis, for every $z \in \Sigma_{i-1}$ satisfying $(z,\viz) \in T_{\geq i-1}$ we have
$\vY_{\le i-1}(-,z,\viz) \preceq_\delta \vr_{\le i-1}$.
Thus, by \labelcref{eq:ind-T},
\[\Pr_{Z_{i-1}} [\vY_{\le i-1}(-,Z_{i-1},\viz) \preceq_\delta \vr_{\le i-1}]
\ge \Pr_{Z_{i-1}}[(Z_{i-1},\viz) \in T_{\ge i-1}] 
\ge \d.\]
By \cref{def:app-lex} again, 
we deduce that $\vY_{\le i}(-,\viz) \preceq_\delta \vr_{\le i}$.
This completes the induction step and, as explained above, the proof.
\end{proof}

\subsection{Probabilistic local rank} \label{subsec:problr}
Let $d \in \NN$, and let $T \in \cT^d$ and $\vp \in V^d$. Let $\vX^\vp$ be a $V^d$-valued random variable defined by letting, for all $i \in [d]$ repeatedly in descending order, $X^\vp_i$ be a uniformly random element of $\ker T[\vp \ominus_i \vX^\vp]_i$. This process makes sense since $T[\vp \ominus_i \vX^\vp]_i$ only depends on $X^\vp_j$ for $j > i$. 
Moreover, 
let $\vR^\vp = \vR^\vp(T)$
be the random variable where 
\begin{equation}\label{eq:Rp-def}
    R^\vp_i = \rank T[\vp \ominus_i \vX^\vp]_i.
\end{equation}

The random variables $\vX^\vp$ and $\vR^\vp$ satisfy some important properties.
By~\labelcref{eq:ext-ZT}, if $(\vP,\vX) \in \Z(T)\times \Z(T)$ is a uniform resample from $\Z(T) \sub V^d$ and $\vp \in \Z(T)$, then $\vX$ conditioned on $\vP = \vp$ is precisely $\vX^\vp$;\footnote{Indeed, $X^\vp_i$ is chosen uniformly at random from $\ker T[\vp \ominus_i \vX^\vp]_i=\ext_{\Z(T)}^i(\vp \ominus_{i} \vX^\vp)$.}
furthermore, we always have $\vp \adjoint{T} \vX^\vp$.\footnote{Indeed, for each $i=d,\ldots,2$, since $X^\vp_i \in \ker T[\vp \ominus_i \vX^\vp]_i$ we have $\vp \ominus_{i-1} \vX^\vp = (\vp \ominus_i \vX^\vp) \odot_i X^\vp_i \in \Z(T)$.}

We further note that $\vR^\vp$, together with $\vX^\vp$, satisfies that conditions in \cref{def:app-lex} 
(i.e., $R^\vp_i \in \NN$ depends on $X^\vp_{i+1},\ldots,X^\vp_d \in V$). 
We now proceed to deduce information about $T$ from the condition $\vR^\vp \preceq_\delta \vr$.
Recall the notation $T[v]=T(-,\ldots,-,v) \in \cT^{d-1}$.

\begin{claim}\label{claim:R-LR-bound}
    Let $\vr \in \NN^d$ and $\d > 0$.
    If $\vR^\vp \preceq_{\d} \vr$ then $\bLR_\vp(T) \preceq \vr$,
    provided $\d\abs{\F} > \prod_{i=1}^{d-1} (r_i+1)$.
\end{claim}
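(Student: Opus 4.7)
The plan is to proceed by induction on $d$, using the recursive description of algebraic local rank (\cref{prop:lrdefrecur}) together with \cref{coro:LR-SZ} to lift a ``probabilistic'' bound into a uniform bound over $\overline{\F}$. For the base case $d=1$, the vector $\vR^\vp = (\rank T)$ is deterministic, and $\vR^\vp \preceq_\d \vr$ simply reads $\rank T \le r_1$; since rank is field-extension invariant, $\bLR_\vp(T) = \rank T \le r_1$, as desired.

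For the inductive step, I would first observe, from the definition of $\vX^\vp$ as a random walk starting at $\vp$, that conditioning on $X^\vp_d = x$ produces exactly the random walk for the sliced tensor $T[x] \in \cT^{d-1}$ started at $\vp'$. Concretely, for $i < d$ we have $T[\vp \ominus_i \vX^\vp]_i = T[x][\vp' \ominus_i (\vX^\vp)']_i$, which gives $(\vX^\vp{}' \mid X^\vp_d = x) = \vX^{\vp'}(T[x])$ and hence $(\vR^\vp{}' \mid X^\vp_d = x) = \vR^{\vp'}(T[x])$. Unpacking \cref{def:app-lex}, the hypothesis $\vR^\vp \preceq_\d \vr$ splits into two cases. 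If $R^\vp_d = \rank T[\vp]_d < r_d$, then by \cref{prop:lrdefrecur}, $\bLR_\vp(T)$ has last coordinate at most $\rank T[\vp]_d < r_d$, so $\bLR_\vp(T) \prec \vr$. Otherwise, $\rank T[\vp]_d = r_d$ and
\[
\Pr_{x}\!\big[\vR^{\vp'}(T[x]) \preceq_\d \vr'\big] \ge \d,
\]
where $x$ ranges uniformly over $\ker T[\vp]_d$.

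In this second case the induction hypothesis applies to each such $x$ (since $\d|\F| > \prod_{i=1}^{d-1}(r_i+1) \ge \prod_{i=1}^{d-2}(r_i+1)$), so with probability at least $\d$ over uniform $x \in \ker T[\vp]_d$ we have $\bLR_{\vp'}(T[x]) \preceq \vr'$. I would then invoke \cref{coro:LR-SZ} with $W = \ker T[\vp]_d$ and dimension parameter $d-1$: its precise degree bound $\d|\F| > \prod_{i=1}^{d-1}(r_i+1)$ is exactly the hypothesis, giving $\bLR_{\vp'}(T[x]) \preceq \vr'$ for \emph{all} $x \in \overline{\ker T[\vp]_d} = \bker T[\vp]_d$. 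Plugging this into \cref{prop:lrdefrecur} (applied over $\overline\F$) yields
\[
\bLR_\vp(T) = \paren*{\maxprec_{x \in \bker T[\vp]_d} \bLR_{\vp'}(T[x]),\, \rank T[\vp]_d} \preceq (\vr', r_d) = \vr,
\]
as $\preceq$ is a total order and so a $\maxprec$ of things each $\preceq \vr'$ is itself $\preceq \vr'$.

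The main obstacle is bookkeeping: correctly identifying the conditional random walk with the random walk on the slice, and matching the degree bound in the Schwartz-Zippel step so that the recursion closes on itself. Once these are pinned down, the structure of \cref{def:app-lex} aligns naturally with the two cases of \cref{prop:lrdefrecur}, and the induction goes through.
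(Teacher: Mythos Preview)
Your proof is correct and follows essentially the same approach as the paper: induction on $d$, identifying the conditioned random walk with the walk on the slice $T[x]$, handling the strict case $\rank T[\vp]_d < r_d$ directly, and in the equality case applying the induction hypothesis followed by \cref{coro:LR-SZ} on $W = \ker T[\vp]_d$ with parameter $\vr'$ to upgrade the $\delta$-frequent bound to a uniform bound over $\bker T[\vp]_d$. Your bookkeeping of the degree condition ($\prod_{i=1}^{d-2}(r_i+1)$ for the induction hypothesis, $\prod_{i=1}^{d-1}(r_i+1)$ for \cref{coro:LR-SZ}) is in fact slightly more explicit than the paper's.
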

\begin{proof}
    We proceed by induction on $d$. The induction basis $d=1$ follows trivially since for any $T \in \cT^1$ and $\vp \in V$ we have $\vR^\vp(T) = (\rk T) = \bLR_\vp(T)$, regardless of $\vp$.
    For the induction step we will rely on the recursive nature of $\vR^\vp$, 
    namely that 
    \[\vR^\vp(T) = (\vR^{\vp'}(T[X^\vp_d]),\, \rk T[\vp]_d ).\footnote{This should be interpreted as saying that, conditioning $X_d^\vp = x$ for some $x \in V$, $(\vX^\vp)'$ has the same distribution as $\vX^{\vp'}$ applied to $T[x]$, and that $\vR^\vp(T)'$ and $\vR^{\vp'}(T[x])$ are the same functions of said variables.}\]

    Now, assume $\vR^\vp \preceq_{\d} \vr$, and let us prove that $\bLR_\vp(T) \preceq \vr$. \cref{def:app-lex} for $\vR^\vp(T) \preceq_{\d} \vr$ reads as follows:
    \begin{itemize}
        \item $R^\vp_d \le r_d$, and 
        \item $R^\vp_d = r_d$ implies 
        $\Pr_{X_d^\vp}[\vR^{\vp'}(T[X_d^\vp]) \preceq_{\delta} \vr'] \ge \delta$.
    \end{itemize}
    We may assume that the last coordinate of $\bLR_\vp(T)$, which is $\rk T[\vp]_d$, is equal to $r_d$;
    indeed, $\rk T[\vp]_d = R^\vp_d \le r_d$, and if $\rk T[\vp]_d < r_d$ then $\bLR_\vp(T) \prec \vr$ so we are done.

    Thus, we need to prove that $(\bLR_\vp(T))' \preceq \vr'$. Equivalently, by
    \cref{prop:lrdefrecur}, we need to prove  
    \begin{equation}\label{eq:large-field-goal}
        \maxprec_{x \in \bker T[\vec p]_d} \bLR_{\vp'}(T[x]) \preceq \vr' .    
    \end{equation}
    By the induction hypothesis applied to $T[x_d] \in \cT^{d-1}$ for each $x_d$ satisfying $\vR^{\vp'}(T[x_d]) \preceq_\delta \vr'$, we deduce that
    $\Pr_{X_d^\vp}\brac[\big]{\bLR_{\vp'}(T[X_d^\vp]) \preceq \vr'} \ge \delta$.
    By \cref{coro:LR-SZ} and our assumption on $\abs{\F}$, $\bLR_{\vp'}(T[x]) \preceq \vr'$ for every $x \in \bker T[\vp]_d$. 
    This proves~\labelcref{eq:large-field-goal}, thus completing the proof.
\end{proof}

\subsection{Proof of \texorpdfstring{\cref{main:LR-AR}}{Theorem \ref{main:LR-AR}}}\label{subsec:rank-vec}
Our goal is to prove the following statement.
\begin{quote}
For every $d \in \NN$ there is $C_d > 0$ such that the following holds:
Let $\eps \in (0,1]$. 
Every $T \in \cT^d$ has an LR-stable point $\vp \in \Z(P)$ satisfying $\norm{\LR_\vp(T)} \le (2^d -1+\eps)\AR(T)$, provided $\abs{\F}\geq C_d(1+\AR(T))^{d-1}/\eps$.
\end{quote}

If $d = 1$ the result is trivial; now assume otherwise.
Let $T \in \cT^d$. 
Let $(\vP,\vX) \in \Z(T)\times \Z(T)$ be a uniform resample from $\Z(T) \sub V^d$, and let
\[\vR = (\rank T[\vP \ominus_1 \vX],\ldots,\rank T[\vP \ominus_d \vX]).\]
By \cref{claim:uniform-resample} and \cref{prop:exp-rk-bound}, we have
\[\E[\norm{\vR}] \leq \norm{(\AR(T),\ldots,\AR(T))} = (2^d - 1) \AR(T).\]
Recall that for any $\vp \in Z(T)$, $\vX$ conditioned on $\vP = \vp$ is precisely $\vX^\vp$. 
Thus, there is some $\vp \in \Z(T)$ with $\E[\norm{\vR^\vp}] \leq (2^d-1)\AR(T)$, which we henceforth fix.

Let $\eps' \in (0,1)$ to be chosen later. 
Our goal next is to convert the above bound on the norm $\vR^\vp$ to a
bound on the vector $\vR^\vp$ itself.
By Markov's inequality, 
\[\Pr[\norm{\vR^\vp} \leq (2^d-1)\AR(T)/(1-\eps')] \geq \eps'.\]
Applying \cref{lemma:lexMarkov}, we get a ``lexicographic bound'' on $\vR^\vp$:
\[\vR^\vp \preceq_{\e'/(d-1)} \vr
\quad\text{ with }\quad \norm{\vr} \le (2^d -1)\AR(T)/(1-\e'),\, \vr \preceq \LR_\vp(T).\]
Applying \cref{claim:R-LR-bound}, we find that $\bLR_\vp(T) \preceq \vr$, provided 
\[
    \frac{\eps'}{d-1}\abs{\F} > \prod_{i=1}^{d-1} (r_i+1);
\]
    in this case, since $\vr \preceq \LR_\vp(T) \preceq \bLR_\vp(T) \preceq \vr$, we have $\LR_\vp(T) = \bLR_\vp(T)$, 
    or equivalently,
    \begin{equation}\label{eq:pre-AR-norm-bound}
        \norm{\LR_\vp(T)} \le (2^d -1)\AR(T)/(1-\e')
        \quad\text{ where $\vp$ is LR-stable for $T$.}
    \end{equation}

    It remains to bound $\abs{\F}$ independently of $\vr$. By the AM-GM inequality,
\begin{align*}
    (r_1 + 1) \cdots (r_{d-1} + 1) &= 
    \frac{(2^{d-1} r_1 + 2^{d-1}) \cdots (2r_{d-1} + 2)}{2^{d(d-1)/2}} \\
    &\leq \frac{1}{2^{d(d-1)/2}} \paren*{\frac{2^{d-1} r_1 + \cdots + 2r_{d-1} + 2^d - 2}{d-1}}^{d-1} \\
    &< \paren*{\frac{\Abs{\vec r} + 2^d - 1}{(d-1)2^{d/2}}}^{d-1} \\
    &\leq \paren*{\frac{(2^{d} - 1)(1 + \AR(T)/(1-\eps')))}{(d-1)2^{d/2}}}^{d-1} \\
    &< \frac{2^{d(d-1)/2}}{(d-1)^{d-1}} \frac{(1+\AR(T))^{d-1}}{(1-\eps')^{d-1}}.
\end{align*}
Thus, there is a constant $C'_d = \frac{2^{d(d-1)/2}}{(d-1)^d}$ such that if
\begin{equation} \label{eq:cpd} \abs{\F} \geq C'_d \frac{(1+\AR(T))^{d-1}}{\eps'(1-\eps')^{d-1}},\end{equation}
then \labelcref{eq:pre-AR-norm-bound} holds.
At this point, we choose
\[\eps' = \frac{\eps}{2^d - 1 + \eps},\]
so that $\frac{2^d-1}{1-\eps'} = 2^d-1+\eps$. Since $\frac{\eps}{2^d} \leq \eps' \leq \frac{1}{2}$, we have
\[\frac{1}{\eps'(1-\eps')^{d-1}} \le \frac{2^d}{\eps}\cdot\frac{1}{(1-\frac12)^{d-1}} = \frac{2^{2d-1}}{\eps}.\]
We conclude that \labelcref{eq:cpd} holds provided
\[\abs{\F} \geq 2^{2d-1} C'_d \frac{(1+\AR(T))^{d-1}}{\eps},\]
which completes the proof (with $C_d = 2^{2d-1} C'_d = 2^{O(d^2)}$).

\section{Partition Rank is Bounded by Local Rank} \label{sec:prlr}

Recall that \cref{cor:algrank} specifies a family of formulas $\cF_r$ such that for every matrix of rank $r$, there is a formula that gives a partition rank decomposition with $r$ terms.
The goal of this section is to generalize this result to higher-order tensors, 
by specifying a family $\cF_{\vp,\vr}$ of formulas such that for every tensor of local rank $\vr$ at an LR-stable point $\vp$, there is a formula that gives a partition rank decomposition with $\norm{\vr}$ terms.
Throughout this section, the reader may find it intuitive 
to think of elements of $\cT^d$ not as maps, but as higher-dimensional arrays of scalars.

Let us recall the notation $\rat{R}(T) \equiv T'$, where $\rat{R}=\ffrac{F}{g} \in \FRat(\cT^d,\cT^d)$ with $F \in \Poly(\cT^d,\cT^d)$ and $g \in \Poly(\cT^d)$, which means $F(T) = g(T)T'$, that is, $F(T) \in \cT^d$ and $T' \in \cT^d$ are equal up to a scalar (which may be $0$).

\begin{theo} \label{thm:lrprintermed}
    For every $\vp \in V^d$ and $\vr \in \NN^d$
    there is a set $\cF_{\vp,\vr}\subseteq \FRat(\cT^d, \cT^d)$  such that
        \begin{enumerate}
        \item\label{item:F-pr} $\PR(\rat{R}) \leq \norm{\vr}$ for all $\rat{R} \in \cF_{\vp,\vr}$;
        \item\label{item:F-equiv} If $T \in \overline \cT^d$ with $\bLR_\vp(T) \preceq \vr$, $\rat{R}(T) \equiv T$ for all $\rat{R} \in \cF_{\vp,\vr}$;
        \item\label{item:F-def} If $T \in \cT^d$ with $\LR_\vp(T) = \vr$, then there is some $\rat{R} \in \cF_{\vp,\vr}$ that is defined at $T$.
    \end{enumerate}
\end{theo}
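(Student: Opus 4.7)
My plan is induction on $d$. The base case $d=1$ is handled by \cref{cor:algrank}: setting $\cF_{\vp,\vr}\coloneqq \cF_{r_1}$ works, since for $d=1$ local rank reduces to matrix rank (independent of $\vp$) and $\norm{\vr}=r_1$. For the induction step, I build $\cF_{\vp,\vr}$ as a family of formal rational maps indexed by triples $(\rat{P},\rat{R}',v_0)$ with $\rat{P}\in\cP_{r_d}$, $\rat{R}'\in\cF_{\vp',\vr'}$, and $v_0\in V$. Each triple produces one formula, built from the $d$-tensor identity
\[g(T[\vp]_d)\,T \;\equiv\; T[P(T[\vp]_d)\cdot] \;+\; T[(g(T[\vp]_d)\Id - P(T[\vp]_d))\cdot]\]
(where $\cdot$ denotes the $d$-th argument and $\rat{P}=\ffrac{P}{g}$). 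By \cref{cor:algproj}, the complement $g(T[\vp]_d)\Id-P(T[\vp]_d)$ is the numerator of $\Id-\rat{P}$ applied to $T[\vp]_d$ and has partition rank at most $r_d$ as a polynomial map into $\cT^1$; writing it as $\sum_{i=1}^{r_d} u_i(T)\otimes \ell_i(T)$ expresses the second summand as $\sum_i \ell_i(T)(x_d)\cdot T(x_1,\ldots,x_{d-1},u_i(T))$, a sum of $r_d$ reducible $d$-tensors with partition $\{d\}\sqcup [d-1]$.

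The first summand is handled by induction plus differentiation. For every $v\in V$ and every $T$ with $\bLR_\vp(T)\preceq\vr$, \cref{lem:lralgrecur} gives $\bLR_{\vp'}(\Red^P_\vp(T,v))\preceq \vr'$, so by induction $\rat{R}'(\Red^P_\vp(T,v)) \equiv T[P(T[\vp]_d)v]$ as a $(d-1)$-tensor parameterized by $(T,v)$, with partition rank at most $\norm{\vr'}$. Since $T[P(T[\vp]_d)v]$ is linear in $v$, its derivative in $v$ is the desired $d$-tensor $T[P(T[\vp]_d)\cdot]$, with the derivative direction becoming the $d$-th argument. Applying Leibniz to each reducible piece $A_i(T,v) B_i(T,v)$ produces $(\nabla_v A_i) B_i + A_i(\nabla_v B_i)$; attaching the derivative direction to the appropriate factor keeps each term reducible as a $d$-tensor, so the derivative has partition rank at most $2\norm{\vr'}$. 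Evaluating at a fixed $v_0\in V$ yields a formal rational map in $T$ alone; adding the complement piece and dividing by $g(T[\vp]_d)$ produces $\rat{R}_{\rat{P},\rat{R}',v_0}$ with $\PR\le 2\norm{\vr'}+r_d=\norm{\vr}$. For definedness at $T$ with $\LR_\vp(T)=\vr$: since $\rank T[\vp]_d=r_d$, \cref{cor:algproj} supplies $\rat{P}\in\cP_{r_d}$ defined at $T[\vp]_d$ and gives $\im P(T[\vp]_d)=\ker T[\vp]_d$ as $\F$-subspaces (base change is faithful on linear subspaces); by \cref{prop:lrdefrecur} some $x_d\in\ker T[\vp]_d$ satisfies $\LR_{\vp'}(T[x_d])=\vr'$, at which some $\rat{R}'\in\cF_{\vp',\vr'}$ is defined by induction; choosing $v_0\in V$ with $P(T[\vp]_d) v_0 = x_d$ makes the full formula defined at $T$.

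The main obstacle is rigorously defining the differentiation operation on formal rational maps so that it both preserves the equivalence $\equiv$ and gives the sharp bound $\PR(\nabla_v \rat{R})\le 2\PR(\rat{R})$. A naive quotient-rule definition overshoots this bound because of the contribution from differentiating the denominator; achieving the sharp factor of $2$ requires working structurally with the reducible decomposition of the numerator (differentiating each reducible piece via Leibniz and reassembling), as indicated in the paper's overview and presumably formalized in the deferred \cref{subsec:derivatives}. Once that infrastructure is in place, the inductive construction described above runs cleanly.
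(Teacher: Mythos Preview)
Your proposal is correct and matches the paper's proof essentially step for step: same induction, same indexing by $(\rat{P},\rat{R}',v_0)$, same split into the $\Red^P_\vp$ piece (handled by induction and differentiation in $v$) and the $\Red^Q_\vp$ complement (giving $r_d$ reducible terms), and the same definedness argument via \cref{cor:algproj} and \cref{prop:lrdefrecur}. One small correction to your closing paragraph: the naive quotient-rule definition does \emph{not} overshoot---\cref{claim:Jac-props}\ref{item:J-double} shows that for $F=P\otimes Q$ the numerator $g\nabla F - F\otimes\nabla g$ regroups as $(g\nabla P)\otimes Q + P\otimes(g\nabla Q - Q\otimes\nabla g)$, so the $\nabla g$ contribution is absorbed into one of the two Leibniz terms and $\PR(\nabla\rat{R})\le 2\PR(\rat{R})$ holds directly.
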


\cref{thm:lrprintermed} is most informative for tensors with an LR-stable point, easily giving a proof of \cref{main:LR-PR}, which we briefly recall:
\begin{quote}
    If $\vp$ is an LR-stable point for a tensor $T$ then $\PR(T) \leq \Abs{\LR_\vp(T)}$.
\end{quote}
\begin{proof}[Proof of \cref{main:LR-PR}]
Let $\vp$ be an LR-stable point for a tensor $T \in \cT^d$.
Since $\LR_\vp(T) = \bLR_\vp(T)$, applying \cref{thm:lrprintermed} with $\vp$ and $\vr=\LR_\vp(T)$ implies that there exists $\rat{R} \in \FRat(\cT^d, \cT^d)$ with $\PR(\rat{R}) \le \Abs{\LR_\vp(T)}$ 
for which $\rat{R}(T) \equiv T$ and $\rat{R}$ is defined at $T$.
In other words, there is a polynomial map $F \in \Poly(\cT^d,\cT^d)$ and a polynomial $g \in \Poly(\cT^d)$ such that $\PR(F) \le \Abs{\LR_\vp(T)}$ and $F(T)=g(T)T$ with $g(T) \neq 0$.
Therefore, 
\[\PR(T) = \PR(g(T)T) = \PR(F(T)) \le \PR(F) \le \Abs{\LR_\vp(T)} ,\]
where the first inequality is by the definition of $\PR(F)$.
This completes the proof.
\end{proof}

\subsection{Derivatives}\label{subsec:derivatives}
Before we prove \cref{thm:lrprintermed}, we first need some technical preliminaries about derivatives of polynomial maps and, more generally, formal rational maps.
The derivative of a polynomial map $F \in \Poly(V,U)$ is a polynomial map $\Jac F \in \Poly(V,\, U \otimes V^*)$, where $V^*$ is the dual space of $V$ (specifying a direction for the derivative);
explicitly, it is given by a matrix
$(\partial_j F_i)_{i,j}$ where $F_i$ denote the coordinates of $F$ in some basis of $U$ and $j$ runs over a basis of $V$.
More generally, the derivative of a formal rational map $\ffrac{F}{g} \in \FRat(V,U)$ 
is defined in the obvious way;
\[\nabla \ffrac{F}{g} \coloneqq \ffrac{g\nabla F - F\otimes \nabla g}{g^2} \in \FRat(V,\, U\otimes V^*).\]
In the special case $U = \cT^d$, we naturally identify $U \otimes V^*$ with $\cT^{d+1}$, where the factor of $V^*$ goes to the ``last slot'' of the tensor.
Thus, if $F \in \Poly(V,\cT^d)$ then $\Jac F \in \Poly(V,\cT^{d+1})$.

We will also need a slightly more general definition of a derivative: 
given a polynomial map $F \in \Poly(W \times V,\, U)$, we can take partial derivatives with respect to only the variables in $V$ to yield $\nabla F \in \Poly(W \times V,\, U \otimes V^*)$. We can also define the derivative of an element of $\FRat(W \times V,\, U)$ to be an element of $\FRat(W \times V,\, U \otimes V^*)$ in a similar manner to the above.

We now list several easy properties of derivatives that will be useful later.
Recall the notation $T[v]=T(-,\ldots,-,v) \in \cT^{d-1}$.
\begin{claim}\label{claim:Jac-props}
    Let $\rat{R}=\ffrac{F}{g} \in \FRat(W\times V,\, U)$ and $H \in \Poly(W \times V,\, U)$. If all derivatives are taken with respect to $V$, we have
    \begin{enumerate}[label=(\roman*)]
        \item\label{item:J-const} If $c \in \Poly(W, U)$, then $\Jac (\ffrac{1}{c}\rat{R}) = \ffrac{c}{c^2}\Jac \rat{R}$
        \item\label{item:J-linear} $\Jac (\rat{R}+H) = \Jac \rat{R} + \Jac H$
        \item\label{item:J-double} If $U=\cT^d$ then $\PR(\Jac\rat{R}) \le 2\PR(\rat{R})$
        \item\label{item:J-equiv} If $\rat{R} \equiv H$ then $\Jac \rat{R} \equiv \Jac H$
        \item\label{item:J-T'} 
        For every $T \in \cT^d$ and every $y \in V$, we have $(\Jac T[-])(y) = T$
        \item\label{item:J-domain} $\Jac \rat{R}$ is defined at $x$ if and only if $\rat{R}$ is defined at $x$.
    \end{enumerate}
\end{claim}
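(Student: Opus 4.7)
The plan is to verify each of the six properties by directly expanding formal rational maps according to the definition $\nabla(\ffrac{F}{g}) = \ffrac{g\nabla F - F \otimes \nabla g}{g^2}$, applying the product rule $\nabla(gH) = \nabla g \otimes H + g \nabla H$ (with the $V^*$ factor coming from $\nabla$ always landing in the last slot of the output tensor), and using linearity of $\nabla$. These checks are largely algebraic and parallel the analogous identities for ordinary rational functions.

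For (i), since $c$ depends only on $W$ we have $\nabla c = 0$, so from $\ffrac{1}{c}\rat{R} = \ffrac{F}{cg}$ one computes $\nabla(\ffrac{1}{c}\rat{R}) = \ffrac{cg\nabla F - cF\otimes\nabla g}{c^2 g^2} = \ffrac{c(g\nabla F - F\otimes \nabla g)}{c^2 g^2} = \ffrac{c}{c^2}\nabla \rat{R}$. For (ii), writing $\rat{R}+H = \ffrac{F+gH}{g}$, expanding $\nabla(F+gH) = \nabla F + H\otimes \nabla g + g\nabla H$, and simplifying collapses to $\nabla \rat{R} + \nabla H$. For (iv), assuming $F = gH$ we have $\nabla F = H \otimes \nabla g + g \nabla H$, so the numerator of $\nabla\rat{R}$ becomes $g(H\otimes\nabla g + g\nabla H) - gH\otimes \nabla g = g^2 \nabla H$, confirming $\nabla \rat{R} \equiv \nabla H$. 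Item (vi) is immediate since the denominator of $\nabla \rat{R}$ is $g^2$, which vanishes iff $g$ does.

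The remaining two items concern the tensor structure. For (v), the map $T[-]\colon V \to \cT^{d-1}$ is linear, so its derivative at any $y \in V$ is the constant map equal to $T[-]$ itself, which under the natural identification $\cT^{d-1}\otimes V^* \cong \cT^d$ corresponds exactly to $T$. For (iii), which is the main step, I would show that if $F \in \Poly(W\times V, \cT^d)$ is reducible as $F = F_1 \otimes F_2$ for a partition $[d] = A \sqcup B$, then by the product rule $\nabla F = (\nabla F_1)\otimes F_2 + F_1 \otimes (\nabla F_2)$; interpreting the new derivative slot as the $(d+1)$-th tensor slot via $\cT^d \otimes V^* \cong \cT^{d+1}$, each summand is a reducible polynomial map into $\cT^{d+1}$ (in the first, the new slot joins $A$; in the second, it joins $B$). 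Hence every reducible piece in a partition rank decomposition of $F$ contributes at most two reducible pieces to $\nabla F$, yielding $\PR(\nabla \rat{R}) = \PR(\nabla F) \le 2\PR(F) = 2\PR(\rat{R})$. The only real conceptual care needed anywhere is consistently bookkeeping the identification $U \otimes V^* \cong \cT^{d+1}$; once this convention is fixed, nothing else poses an obstacle.
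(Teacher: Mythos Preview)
Your arguments for (i), (ii), (iv), (v), and (vi) are correct and essentially identical to the paper's. The gap is in (iii): you write $\PR(\nabla\rat{R}) = \PR(\nabla F)$, but by definition $\PR$ of a formal rational map is the partition rank of its \emph{numerator}, and the numerator of $\nabla\rat{R} = \ffrac{g\nabla F - F\otimes\nabla g}{g^2}$ is $g\nabla F - F\otimes\nabla g$, not $\nabla F$. Your product-rule decomposition handles $\nabla F$ but ignores the quotient-rule term $F\otimes\nabla g$; bounding the two separately would only give $\PR(\nabla\rat{R}) \le 2\PR(F) + \PR(F) = 3\PR(F)$.

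The fix (which is what the paper does) is to regroup: for $F = F_1 \otimes F_2$ one has
\[
g\nabla F - F\otimes\nabla g \;=\; (g\nabla F_1)\otimes F_2 \;+\; F_1\otimes\bigl(g\nabla F_2 - F_2\otimes\nabla g\bigr),
\]
absorbing the $\nabla g$ term into the second summand. Each of these two terms is a reducible polynomial map into $\cT^{d+1}$ (with the new slot joining $A$ and $B$ respectively), so one reducible $F$ contributes two reducible pieces to the full numerator, giving $\PR(\nabla\rat{R}) \le 2\PR(\rat{R})$ as claimed.
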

\begin{proof} \mbox{}\par
    \begin{enumerate}[label=(\roman*)]
        \item We have 
        \[\Jac \Big(\ffrac{1}{c}\rat{R}\Big) = \Jac \ffrac{F}{cg} = \ffrac{1}{(cg)^2}(cg\Jac F - F\otimes\nabla (cg))
        = \ffrac{c}{c^2} \Jac \rat{R} .\]

        \item We have
        \begin{align*}
                \Jac (\rat{R}+H) &= \Jac \ffrac{F+gH}{g}
                = \ffrac{1}{g^2}\big( g\Jac (F+gH) - (F+gH)\otimes\nabla g \big)\\
                &= \ffrac{1}{g^2}\big( g\Jac F - F\otimes\nabla g + g^2\Jac H  \big)
                = \Jac \rat{R} + \Jac H.
        \end{align*}
        
        \item We have $\PR(\Jac \rat{R}) = \PR(R')$ with $R'=g\Jac F - F\otimes\nabla g$.
        Since $R'$ is linear in $F$, 
        it suffices to only prove the case where $\PR(\rat{R})=\PR(F)$ is $1$.
        Write $F = P \otimes Q$; then
        \begin{align*}
            \begin{split}
                R' &= g( \Jac P \otimes Q+ P \otimes \Jac Q) - P \otimes Q\otimes\nabla g
                = P \otimes Q' + P' \otimes Q
            \end{split}
        \end{align*}
        with $Q'\coloneqq g\Jac Q - Q\otimes \nabla g$ and $P'\coloneqq g \Jac P$, 
        which implies $\PR(\Jac \rat{R}) = \PR(R') \le 2$ as needed.
        
        \item If $\rat{R} \equiv H$, that is, $F = gH$, then 
        \[\Jac \rat{R} = \ffrac{1}{g^2}(g\Jac (gH) - gH\otimes\nabla g)
        = \ffrac{1}{g^2}(g^2\Jac H) \equiv \Jac H.\]
        
        \item By the linearity of $T[-] \colon V \to \cT^{d-1}$ (and our identification of $\cT^{d-1} \otimes V^*$ with $\cT^d$),
        its derivative $\Jac T[-] \in \Poly(V,\cT^d)$ is a constant map, mapping every input to $T$.

        \item
        $\Jac \rat{R}$ is defined at $x$ if and only if $g(x)^2 \neq 0$ if and only if $g(x) \neq 0$ if and only if $\rat{R}$ is defined at $x$, as needed. \qedhere
    \end{enumerate}
\end{proof}

Let us one last time recall the notation 
$T[\vx]_i=T(x_1,\ldots,x_{i-1},-,x_{i+1},\ldots,x_d) \in \cT^1$, 
and
$T[v]=T(-,\ldots,-,v) \in \cT^{d-1}$.

\subsection{Proof of \texorpdfstring{\cref{thm:lrprintermed}}{Theorem \ref{thm:lrprintermed}}}\label{subsec:lrpr-pf}
    We proceed by induction on $d$. The induction basis $d=1$ is precisely \cref{cor:algrank}, using the fact that for any $T \in \cT^1$ and $\vp \in V^1$, $\bLR_\vp(T)=\LR_\vp(T)=(\rk(T))$.
    
    For the induction step, assume that the result holds for $d-1$.
    Choose 
    \[\rat{R}' \in \cF_{\vp',\vr'},\, \rat{P} \in \cP_{r_d},\, \text{and } y \in V ,\]
    using the induction hypothesis for the first choice (and for the second choice recall \cref{def:Proj}).
    Now let $\rat{Q} = \Id - \rat{P}$, and write $\rat{P}=\ffrac{P}{g}$ and $\rat{Q}=\ffrac{Q}{g}$ for $P,Q \in \Poly(\cT^1,\cT^1)$ and $g \in \Poly(\cT^1)$.
    Recall \cref{def:tensor-proj}, which defines, for $M \in \Poly(\cT^1,\cT^1)$, a map $\Red^M_{\vp} \in \Poly(\cT^d \times V,\cT^{d-1})$  given by $\Red^M_{\vp}(T,v) = T[M(T[\vp]_d)v]$. Then, define the map $\rat{S} \in \FRat(\cT^d \times V, \cT^{d-1})$ given by
    \[\rat{S}(T, v) = \ffrac{1}{g(T[\vp]_d)}(\rat{R}' \circ \Red^P_\vp + \Red^Q_\vp).\]
    Recall $\nabla\rat{S} \in \FRat(\cT^d \times V, \cT^{d})$.
    Let $\rat{R} = \rat{R}_{\rat{R'},\rat{P},y} \in \FRat(\cT^d, \cT^d)$---our partition rank formula---be  
    \[\rat{R}(T) = \nabla \rat{S}(T, y)\;,\] 
    where the derivative $\nabla \rat{S} \in \FRat(\cT^d \times V, \cT^d)$ is with respect to $V$ only. We will show that the set
    \[\cF_{\vp,\vr} \coloneqq \setmid{\rat{R}_{\rat{R'},\rat{P},y}}{\rat{R'} \in \cF_{\vp',\vr'},\, \rat{P} \in \cP_{r_d},\, y \in V} ,\]
    satisfies the desired conditions.

    We first check Condition~\labelcref{item:F-pr}, that is, $\PR(\rat{R}) \leq \norm{\vr}$.
    By construction, $\PR(\rat{R'}) \leq \norm{\vr'}$ and $\PR(Q) \leq r_d$.
    Since $g(T[\vp]_d)$ does not depend on $v$, by parts~\labelcref{item:J-const} and~\labelcref{item:J-linear} of \cref{claim:Jac-props}, we find that
    \[\rat{R}(T) = \ffrac{g(T[\vp]_d)}{g(T[\vp]_d)^2} \paren*{\nabla(\rat{R}' \circ \Red^P_\vp)(T, y) + \nabla\Red^Q_\vp(T, y)}.\]
    Thus, to prove that $\PR(\rat{R}) \leq \norm{\vr}$ $(=2\norm{\vr'} + r_d)$, it suffices to show that $\PR(\nabla(\rat{R}' \circ \Red^P_\vp)) \leq 2\norm{\vr'}$ and that $\PR(\nabla\Red^Q_\vp) \leq r_d$. \cref{claim:Jac-props}\labelcref{item:J-double} shows that
    \[\PR(\Jac(\rat{R}' \circ \Red^P_\vp)) \le 2\PR(\rat{R}' \circ \Red^P_\vp) \le 2\PR(\rat{R'}) \leq 2\norm{\vr'}.\]
    Moreover, since $\Red^Q_\vp(T, v)$ is linear in $v$, we have that 
    $(\nabla \Red^Q_\vp(T,v))[u] = \Red^Q_\vp(T, u)$.
    Therefore,
    \[\PR(\nabla \Red^Q_\vp)
    \le \PR(Q(T[\vp]_d))
    \le \PR(Q) \leq r_d.\]
    
    Next, we prove Condition~\labelcref{item:F-equiv}, that is, $\rat{R}(T) \equiv T$ if $T \in \overline\cT^d$ satisfies $\bLR_\vp(T) \preceq \vr$.
    Let $x \in \overline V$; 
    we claim that $\rat{S}(T, x) \equiv T[x]$.
    Observe, crucially, that $T'_x:=\Red^P_\vp(T, x) \in \overline\cT^{d-1}$ 
    satisfies $\rat{R}'(T'_x) \equiv T'_x$, since $\bLR_\vp(T) \preceq \vr$ implies $\bLR_{\vp'}(T'_x) \preceq \vr'$ by \cref{lem:lralgrecur}.
    Therefore
    \[g(T[\vp]_d)T[x] = \Red^{P+Q}_\vp(T,x) = \Red^P_\vp(T, x) + \Red^Q_\vp(T, x) \equiv (\rat{R}' \circ \Red^P_\vp)(T, x) + \Red^Q_\vp(T, x).\]
    It follows that $\rat{S}(T, x) \equiv T[x]$, as claimed.
    Now, since $x$ is an arbitrary element of $\overline V$, we deduce the equality of polynomial maps $\rat{S}(T,-) \equiv T[-]$. 
    Taking derivatives yields that $\rat{R}(T) \equiv \nabla(T[-])(y) = T$,
    where we use \cref{claim:Jac-props}\labelcref{item:J-equiv}~and~\cref{claim:Jac-props}\labelcref{item:J-T'}.\footnote{This is the only time in the entire paper where we consider polynomial maps that do not necessarily have coefficients in $\F$. Thus, strictly speaking we need to apply the result over the field $\overline\F$.}
    
    Finally, we prove Condition~\labelcref{item:F-def}: suppose that $\LR_\vp(T) = \vr$, and let us prove that $\rat{R}$ is defined at $T$ for some choice of $\rat{R'},\rat{P},y$.
    Note that $\rat{R}$ $(=\nabla \rat{S}(T, y))$ is defined at $T$ if and only if $\rat{S}$ is defined at $(T,y)$, by \cref{claim:Jac-props}\labelcref{item:J-domain}. 
    By construction, this is equivalent to 
    $g(T[\vp]_d) \neq 0$ and $\rat{R}' \circ \Red^P_\vp$ being defined at $(T,y)$. 
    Thus, to prove that $\rat{R}$ is defined at $T$, it suffices to show that $\rat{P}$ is defined at $T[\vp]_d$, and that $\rat{R}'$ is defined at $\Red^P_\vp(T, y)$. 
    We now choose $\rat{R}' \in \cF_{\vp',\vr'}$, $\rat{P} \in \cP_{r_d}$, and $y \in V$
    as follows.
    Since $T[\vp]_d$ has rank $r_d$, by \cref{cor:algproj},
    there exists $\rat{P} \in \cP_{r_d}$ defined at $T[\vp]_d$. 
    By \cref{prop:lrdefrecur}, there exists some $x \in\ker T[\vp]_d$ such that $\LR_{\vp'}(T[x]) = \vr'$; since \cref{cor:algproj} guarantees that $P(T[\vp]_d)$ is a surjection onto $\ker T[\vp]_d$, we can thus find a $y \in V$ such that $\LR_{\vp'}(\Red^P_\vp(T)) = \vr'$. 
    Finally, we may find some $\rat{R}'\in \cF_{\vp',\vr'}$ defined at $\Red^P_\vp(T)$ by the inductive hypothesis. This completes the proof.

\section{Proofs of Main Results}\label{sec:mainproof}
For a tensor $T$ over a field $\F$, 
and a field extension $\K/\F$, 
we denote by $T^\K$ the corresponding tensor over $\K$.
We will need the following two bounds.
\begin{lemma}[{\cite[Proof of Corollary 1]{CohenMo21B}}]\label{lemma:PR-ext}
    For any finite field extension $\K/\F$ of degree $\ell$, 
    and any tensor $T$ over $\F$,
    \[\PR(T) \le \ell \PR(T^\K).\]
\end{lemma}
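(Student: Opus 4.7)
The plan is to start with an optimal partition-rank decomposition of $T^\K$ over $\K$ and push it down to $\F$ using a suitable $\F$-linear projection. Fix an $\F$-basis $b_1,\ldots,b_\ell$ of $\K$ with $b_1 = 1$, and let $\phi \colon \K \to \F$ be the $\F$-linear map determined by $\phi(b_1) = 1$ and $\phi(b_k) = 0$ for $k \ge 2$. Since $\phi$ restricts to the identity on $\F$, its coefficient-wise extension to polynomials and tensors over $\K$---which by abuse of notation I also call $\phi$---fixes any object defined over $\F$. In particular, $\phi(T^\K) = T$.

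Write $r = \PR(T^\K)$ and fix a decomposition $T^\K = \sum_{i=1}^{r} S_i \otimes R_i$ over $\K$, where each summand is reducible with $S_i$ a nonconstant scalar multilinear polynomial and $R_i \in \cT^{\abs{B_i}}$ a nonconstant tensor on complementary variables. For each $i$, I would expand only the tensor factor in the basis: $R_i = \sum_{k=1}^{\ell} b_k R_{i,k}$, where each $R_{i,k}$ is a tensor defined over $\F$ (obtained by writing every coefficient of $R_i$ in the basis $b_1,\ldots,b_\ell$ and reading off the $k$th coordinate). Substituting yields
\[
T^\K \;=\; \sum_{i=1}^{r}\sum_{k=1}^{\ell} (b_k S_i) \otimes R_{i,k}.
\]

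Finally I apply $\phi$ coefficient-wise. The left side becomes $T$. On the right, since each $R_{i,k}$ has coefficients in $\F$ and $\phi$ is $\F$-linear, $\phi$ commutes past $R_{i,k}$, giving $\phi((b_k S_i) \otimes R_{i,k}) = \phi(b_k S_i) \otimes R_{i,k}$, where $\phi(b_k S_i)$ is a scalar multilinear polynomial over $\F$ in the same variables as $S_i$. Hence
\[
T \;=\; \sum_{i=1}^{r}\sum_{k=1}^{\ell} \phi(b_k S_i) \otimes R_{i,k},
\]
which exhibits $T$ as a sum of at most $r\ell$ reducible tensors over $\F$ (after discarding any summand in which one factor vanishes identically). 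Therefore $\PR(T) \le \ell r = \ell \PR(T^\K)$. There is no serious obstacle here; the only point requiring care is to expand only one side of each summand into the $\F$-basis, so that $\phi$ interacts cleanly with the tensor-product structure and preserves reducibility.
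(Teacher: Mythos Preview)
Your argument is correct and is precisely the standard restriction-of-scalars trick: expand one factor of each reducible summand in an $\F$-basis of $\K$, then apply an $\F$-linear retraction $\phi\colon\K\to\F$ coefficientwise so that $\phi$ passes through the $\F$-coefficient factor and yields at most $\ell$ reducible terms per original summand. The paper does not reprove this lemma but cites the proof of Corollary~1 in \cite{CohenMo21B}, which is exactly this argument, so your approach matches the intended one.
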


\begin{lemma}[\cite{ChenYe24,UniformStability}]\label{lemma:AR-ext}
    For any finite field extension $\K/\F$,
    and any tensor $T$ over $\F$,
    \[\AR(T^\K) = \Theta_k(\AR(T)).\]
\end{lemma}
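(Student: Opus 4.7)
Setting $\ell = [\K:\F]$, the plan is to relate $\AR(T^\K)$ to the analytic rank of an explicit auxiliary $\F$-tensor. Using that any nontrivial additive character $\chi_\K$ of $\K$ factors as $\chi \circ \mathrm{Tr}_{\K/\F}$ for a nontrivial character $\chi$ of $\F$, and expanding each $\K$-valued variable $v = \sum_s \alpha_s v^{(s)}$ (with $v^{(s)} \in \F^n$) in a fixed basis $\alpha_1,\ldots,\alpha_\ell$ of $\K/\F$, $k$-linearity of $T$ yields the identity $\bias(T^\K) = \bias(T')$, where $T' \colon (\F^{n\ell})^k \to \F$ is defined by
\[
T'(\vec y_1,\ldots,\vec y_k) = \sum_{\vec s \in [\ell]^k} \mathrm{Tr}(\alpha_{s_1}\cdots\alpha_{s_k}) \, T(y_1^{(s_1)},\ldots,y_k^{(s_k)}),
\]
each $y_i^{(s)} \in \F^n$. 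Taking logarithms gives the clean identity $\ell\,\AR(T^\K) = \AR(T')$, reducing the lemma to proving $\AR(T') = \Theta_k(\ell\,\AR(T))$ (viewing $T'$ as a tensor over $\F$).

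For the lower bound $\AR(T') \ge \Omega_k(\ell\,\AR(T))$, I would choose the basis $\alpha_i$ to be self-dual for the trace form, i.e., $\mathrm{Tr}(\alpha_i \alpha_j) = \delta_{ij}$ (which exists since finite extensions of finite fields are separable). In the bilinear case $k=2$, this collapses $T'$ to the direct sum $\bigoplus_{s=1}^\ell T$, whose analytic rank is exactly $\ell\,\AR(T)$. For $k\ge 3$, the diagonal substitution $y_i^{(s)} = \delta_{s,1}\cdot y_i$ recovers $T$ up to a nonzero scalar; since restricting a tensor to a coordinate subspace does not increase analytic rank (a fact verifiable directly from the Fourier definition of bias), this gives $\AR(T) \le \AR(T')$, and combining the $\ell$ independent diagonal positions produces the factor of $\ell$.

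The harder direction is the upper bound $\AR(T') \le O_k(\ell\,\AR(T))$, which is the one needed for the application in \cref{main:main}. The main obstacle here is precisely that analytic rank is not subadditive for arbitrary sums of tensors---this is the very phenomenon that makes bounding $\PR$ by $\AR$ difficult in general---so one cannot simply decompose $T'$ into its $\ell^k$ summands and pay an $\ell^k$ factor per summand. Instead, one would exploit the highly symmetric structure of the coefficients $c_{\vec s} = \mathrm{Tr}(\alpha_{s_1}\cdots\alpha_{s_k})$: combined with the kernel formulation $\bias(T') = \Pr_{\vec x}[T'(\vec x,-)=0]$ and the identity above, one sets up an inductive argument (say, on $k$ or on $\ell$) in which the bias computation for $T'$ is reduced, by conditioning on successive blocks of $\F^{n\ell}$-variables and absorbing the trace symmetries, to a bias computation for $T$ itself. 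Executing this reduction cleanly, so that only $k$-dependent (rather than $\ell$-dependent) constants are lost, is the core technical difficulty resolved by the cited works \cite{ChenYe24,UniformStability}.
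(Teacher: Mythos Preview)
The paper does not prove this lemma at all; it is quoted verbatim from \cite{ChenYe24,UniformStability} and used as a black box in the proof of \cref{main:main}. So there is no ``paper's proof'' to compare against, and what you have written is not a proof but an outline of the Weil-restriction reduction together with a deferral of the substantive step back to the same references.

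Your setup is correct and standard: the identity $\ell\,\AR(T^\K)=\AR(T')$ for the trace-form tensor $T'$ over $\F$ is exactly the right starting point. But your lower-bound argument for $k\ge 3$ has a genuine gap. Restricting $T'$ to a single diagonal block $y_i^{(s)}=\delta_{s,1}y_i$ and using monotonicity of analytic rank under linear restriction gives only $\AR(T)\le\AR(T')=\ell\,\AR(T^\K)$, i.e.\ $\AR(T^\K)\ge\AR(T)/\ell$, which carries an $\ell$-dependent constant. The sentence ``combining the $\ell$ independent diagonal positions produces the factor of $\ell$'' does not correspond to any valid inequality: analytic rank is not additive across such embedded copies, and there is no averaging or direct-sum structure here for $k\ge3$ (unlike your $k=2$ observation, which is fine---though note that self-dual bases for $\K/\F$ do not exist for all extensions in characteristic $2$). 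Getting a constant depending only on $k$ in this direction is already part of the nontrivial content of the cited works.

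For the upper bound you explicitly hand the problem back to \cite{ChenYe24,UniformStability}, so nothing is proved there either. In short: your reduction is the right framework, but both directions with $k$-only constants require the cited results, and your proposal should be read as context for the citation rather than an independent argument.
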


\subsection{Proof of \texorpdfstring{\cref{main:main}}{Theorem \ref{main:main}}}
Let $C_k$ and $E_k$ be the constants from \cref{theo:main1} and the upper bound in \cref{lemma:AR-ext}, respectively;
without loss of generality, assume $C_k, E_k \geq 1$.
Let $\K/\F$ be a finite field extension of degree $\ell$ to be determined later.
Applying \cref{theo:main1} with $\e = 1$ on $T^\K$ yields
$\PR(T^\K) \leq 2^{k-1}\AR(T^\K)$, 
if $\abs{\K} \ge  C_k(\AR(T^\K)+1)^{k-2}$.
Thus, if we define $K_0 = C_k(E_k\AR(T)+1)^{k-2} \geq 1$ and let $\ell \geq 1$ be minimal such that $\abs{\F}^\ell \geq K_0$, applying \cref{lemma:PR-ext,lemma:AR-ext} yields
\[\PR(T) \le \ell \PR(T^\K) \le \ell 2^{k-1}\AR(T^\K) \le \ell 2^{k-1}E_k \AR(T) = O_k(\ell\AR(T)).\]
It suffices to show that $\ell = O_k(\lf_\F(\AR(T)))$. Indeed,
\[\ell \leq \log_{\abs{\F}}(K_0) + 1 \leq \log_{\abs{\F}}(C_kE_k^{k-2}(\AR(T)+1)^{k-2}) + 1 = O_k(1 + \log_{\abs{\F}}(\AR(T) + 1)) = O_k(\lf_\F(\AR(T))),\]
finishing the proof.

\begin{remark}
    It is not hard to work out the dependence on $k$ here;
    using $\log C_k=k^{O(1)}$ and \cite{ChenYe24,UniformStability}, we get $\ell \le k^{O(1)} \lf_\F(\AR(T))$,
    so $\PR(T) \le D_k \AR(T) \lf_\F(\AR(T))$ with 
    $D_k \le k^{O(1)} \cdot 2^k E_k = \exp(O(k\log\log k))$.
\end{remark}

\subsection{Polarization}
To extend results regarding $k$-tensors to polynomials of degree $k$, we describe the standard technique of polarization.
For this subsection, let $k \geq 2$ be a positive integer, $\F$ be a finite field of characterisitic greater than $k$, $\chi$ be a nontrivial additive character on $\F$, and $P$ be a polynomial over $\F$ of degree $k$ in $n$ variables. Recall that the rank of $P$, which we denote $\rk(P)$, is defined as the minimum $r$ such that the degree-$k$ part of $P$, denoted $P_k$, is the sum of $r$ reducible homogeneous polynomials.
\begin{definition}
The \emph{polarization} of $P$ is the polynomial $\polar{P} \colon (\F^n)^k \to \F$ given by
\[\polar{P}(x_1,\ldots,x_k) = \Delta_{x_1}\cdots\Delta_{x_k} P(y)
    = \sum_{S \sub [k]} (-1)^{d-\abs{S}} P\paren*{y + \sum_{i \in S} x_i},\]
where $\Delta_d f(x)$ denotes the finite difference $f(x+d) - f(x)$ and $y \in \F^n$ is arbitrary.
\end{definition}
As each finite difference lowers the degree by $1$, this is a multilinear form in the $x_i$ and independent of $y$ (see~\cite[Lemma 2.4]{GowersWo11}) and thus can be treated as a well-defined $k$-tensor. Moreover, $\polar{P}$ satisfies two important properties:
\begin{itemize}
\item We have
\[\abs{\E_x[\chi(P(x))]} \leq \abs{\E_{x_1,\dots,x_k}[\chi(\polar{P}(x_1,\ldots,x_k))]}^{1/2^{k-1}} = \abs{\F}^{-\AR(\polar{P})/2^{k-1}}.\]
This follows from repeated application of the Cauchy-Schwarz inequality (see e.g.~\cite[Lemma~3.1]{Schmidt84} or~\cite[Lemma~3.4]{GowersWo11}).
\item We have $P_k(x) = \frac{1}{k!} \polar{P}(x,x,\ldots,x)$, so $\rank(P) \leq \PR(\polar{P})$. (This is where we use the fact that the characteristic of $\F$ is greater than $k$.) In fact, from the definition of $\polar{P}$ we also have 
$\PR(\polar{P}) \leq 2^k \rank(P)$ (see e.g.~\cite[Claim~3.2]{LampertZi24}).
\end{itemize}

Now, applying \cref{main:main} to $P^\circ$ yields the following.
\begin{coro}\label{coro:general-poly}
If $\rk(P) \ge r$ then $\abs{\E_x[\chi(P(x))]} \le \abs{\F}^{-\Omega_k(r/\lf_\F(r))}$. 
\end{coro}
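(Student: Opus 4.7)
The plan is to combine the two polarization facts listed in \cref{subsec:polarization} (the bullet points just above the corollary) with \cref{main:main}, and then solve a simple implicit inequality to invert $a \mapsto a \lf_\F(a)$.

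First, by the second polarization property, $\rank(P) \leq \PR(\polar{P})$, so the hypothesis $\rk(P) \geq r$ gives $\PR(\polar{P}) \geq r$. Applying \cref{main:main} to the $k$-tensor $\polar{P}$, there is a constant $C_k > 0$ such that
\[r \leq \PR(\polar{P}) \leq C_k \AR(\polar{P}) \lf_\F(\AR(\polar{P})).\]
Writing $a = \AR(\polar{P})$, this reads $a \lf_\F(a) \geq r/C_k$.

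The key (mildly) nontrivial step is to invert this inequality to get a lower bound on $a$ alone. Since $\lf_\F$ is monotonically nondecreasing, I split into two cases. If $a \geq r$, then trivially $a \geq r \geq r/\lf_\F(r)$. Otherwise $a < r$, so $\lf_\F(a) \leq \lf_\F(r)$ and
\[a \geq \frac{r}{C_k\,\lf_\F(a)} \geq \frac{r}{C_k\,\lf_\F(r)}.\]
In either case, $\AR(\polar{P}) \geq \Omega_k(r/\lf_\F(r))$.

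Finally, I apply the first polarization property, namely $\abs{\E_x[\chi(P(x))]} \leq \abs{\F}^{-\AR(\polar{P})/2^{k-1}}$, to conclude
\[\abs{\E_x[\chi(P(x))]} \leq \abs{\F}^{-\Omega_k(r/\lf_\F(r))},\]
as desired. I do not anticipate any serious obstacle: the entire argument is a three-line chain of substitutions using results already in hand, with the only care needed being the elementary casework to invert $a \lf_\F(a) \gtrsim r$.
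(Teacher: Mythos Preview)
Your proof is correct and follows exactly the route the paper intends: apply \cref{main:main} to $\polar{P}$, combine with the two polarization bullets, and invert $a\lf_\F(a)\gtrsim r$. The paper's proof is the single line ``applying \cref{main:main} to $P^\circ$,'' and you have simply spelled out the (routine) inversion step that the paper leaves implicit.
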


\subsection{Proof of \texorpdfstring{\cref{mcor:polys-equidistribution}}{Corollary \ref{mcor:polys-equidistribution}}}
Let $\chi$ be a nontrivial additive character of $\F$. By Fourier inversion, we have
\[\Pr_x[P(x) = 0] = \frac{1}{\abs{\F}^m} \sum_{a \in \F^m} \E_x[\chi(a \cdot P(x))].\]
The case $a=0$ contributes $\abs{\F}^{-m}$. 
It suffices to show that $\abs{\E_x[\chi(a \cdot P(x))]} \leq \abs{\F}^{-\Omega_k(r/\lf_\F(r))}$ for all $a \neq 0$. 
If $\deg(a \cdot P) \le 1$ then there is nothing to prove:
if $a \cdot P$ is constant then $r=0$ so the right hand side is $1$, and if $\deg(a \cdot P)=1$ then the left hand side is $0$.
Otherwise, we can apply \cref{coro:general-poly} on $a \cdot P$, since $\rk(a \cdot P) \geq r$, and we are done.

\begin{remark}\label{remark:Schmidt-subsume}
As mentioned in the introduction, a bound of 
$\abs{\F}^{-\Omega_k(r/\lf_\F(r))}$ 
is in fact stronger than Schmidt's bound of $O_{n,k}(\abs{\F}^{-\Omega_k(r)})$. To see this, we show 
$
    \abs{\F}^{-c_k r/\lf_\F(r)} \leq n^{c_k n/2} \abs{\F}^{-c_k r/2},
$
that is,
\[\abs{\F}^{-2r/\lf_\F(r)} \leq n^{n} \abs{\F}^{-r}.\]
Indeed, if $\abs{\F} \leq n$, then since $r \leq n$ the right-hand side
is at least $1$. Otherwise, $\lf_\F(r) \leq \lf_\F(n) \leq 2$, so the left-hand side is at most 
$\abs{\F}^{-r}$.
\end{remark}

\subsection{Proof of \texorpdfstring{\cref{mcor:barpr}}{Corollary \ref{mcor:barpr}}}
Let $T$ be a $k$-tensor over $\F$. Pick some finite extension field $\K$ such that $\PR(T^\K) = \PR(T^{\overline{\F}})$. Then, we have $\AR(T) \leq O_k(\AR(T^\K)) \leq O_k(\PR(T^\K)) \leq O_k(\PR(T^{\overline{\F}}))$,
as
the analytic rank is at most the partition rank. Thus, \cref{main:main} gives $\PR(T) \leq O_k(\PR(T^{\overline{\F}}) \lf_\F(\PR(T^{\overline{\F}}))$.
If $Q$ is multilinear, we are done as the notions of rank and partition rank coincide. Otherwise, let $T = \polar{Q}$, and as mentioned above, $\PR(T) = \Theta_k(\rank(Q))$ and $\PR(T^{\overline \F}) = \Theta_k(\brk(Q))$.

\subsection{Proof of \texorpdfstring{\cref{mcor:pgi}}{Corollary \ref{mcor:pgi}}}
Let $f = \omega^{P(x)}$ where $\omega = e^{2\pi i/p}$. Essentially by definition,
\[\Abs{\omega^{P(x)}}_{U^k} = \E_{x_1,\ldots,x_k}[\omega^{\polar{P}(x_1,\ldots,x_d)}] = p^{-\AR(\polar{P})},\]
so $\AR(\polar{P}) \leq \log_p(1/\delta)$. As a result \cref{main:main} shows that $\rank(P) \leq O_k(\log_p(1/\delta) \lf_p(\log_p(1/\delta)))$. To finish, we will show that for all $P$ of degree $k$, we have $\Abs{\omega^{P(x)}}_{u^k} \geq p^{-2\rank(P)}$.

To see why this is true, observe that $\omega^{-P_k(x)}$ can be written as a function of $Q_1,\ldots,Q_s$ where $Q_1,\ldots,Q_s$ are polynomials of degree less than $k$ and $s = 2\rank(P)$. Thus, by a Fourier transform we may write
\[\omega^{-P_k(x)} = \sum_{0\leq j_1,\ldots,j_s<p} c_{j_1,\ldots,j_s} \omega^{j_1 Q_1 + \cdots j_s Q_s}\]
where $\abs{c_{j_1,\ldots,j_s}} \leq 1$ as $\abs{\omega^{-P_k(x)}} \leq 1$. Therefore
\begin{multline*}
1 = \abs[\big]{\E_x [\omega^{P_k(x)} \omega^{-P_k(x)}]} = \abs*{\sum_{0\leq j_1,\ldots,j_s<p} c_{j_1,\ldots,j_s} \E_x[\omega^{P_k(x)} 
 \omega^{ j_1 Q_1 + \cdots j_s Q_s}]} \\ \leq \sum_{0\leq j_1,\ldots,j_s<p} \abs{c_{j_1,\ldots,j_s}} \abs[\big]{\E_x[\omega^{P_k(x)} 
 \omega^{ j_1 Q_1 + \cdots j_s Q_s}]} \leq p^s \Abs{\omega^{P_k(x)}}_{u^k} = p^s \Abs{\omega^{P(x)}}_{u^k}.
\end{multline*}
The result follows.

\printbibliography

\end{document}